\documentclass[11pt,oneside]{amsart}
\usepackage{amsmath, ifthen, amsfonts, amssymb, srcltx, amsopn, xcolor, enumerate, comment} 
\usepackage[linktocpage=true]{hyperref}
\usepackage[all]{xy}
\usepackage{pb-diagram, pb-xy}
\usepackage{overpic}
\usepackage{tikzsymbols}
\usepackage{tikz-cd}

\sloppy
\usepackage[T1]{fontenc}

\usepackage[parfill]{parskip}    
\usepackage{fullpage}
\usepackage{graphicx}
\usepackage{amssymb,amsmath,latexsym}
\usepackage{tikz}
\usetikzlibrary{arrows,decorations.markings,decorations.pathreplacing,calc,matrix,intersections}
\tikzset{->-/.style={decoration={markings,mark=at position #1 with {\arrow{>}}},postaction={decorate}}}

\usepackage{epstopdf}
\usepackage{ifpdf}
\usepackage{color}
\usepackage{float}
\usepackage{amscd,amsmath, mathabx}
\usepackage{amssymb,amsmath,latexsym,color,enumerate,tikz}
\usepackage[all]{xypic}       
\usepackage[varg]{pxfonts}
\usepackage{amsmath,amsthm,amsfonts,amssymb,fancyhdr,graphics,relsize,tikz-cd,mathtools,faktor,tikz,changepage}
\usepackage{graphicx}
\usepackage{placeins}

\usepackage{dsfont}

\DeclareGraphicsRule{.tif}{png}{.png}{`convert #1 `dirname #1`/`basename #1.tif`.png}

\dgARROWLENGTH=1em

\newcommand{\showcommentsbox}{yes}

\newsavebox{\commentbox}
%
{\ifthenelse{\equal{\showcommentsbox}{yes}}%
{\footnotemark
        \begin{lrbox}{\commentbox}
        \begin{minipage}[t]{1.25in}\raggedright\sffamily\tiny
        \footnotemark[\arabic{footnote}]}
{\begin{lrbox}{\commentbox}}}%
{\ifthenelse{\equal{\showcommentsbox}{yes}}%
{\end{minipage}\end{lrbox}\marginpar{\usebox{\commentbox}}}
{\end{lrbox}}}

\definecolor{red}{rgb}{1,0,0} 

 \definecolor{darkgreen}{rgb}{0, .7, 0}

 \definecolor{purple}{rgb}{.7, 0, 1}



\tikzset{mynode/.style={draw,circle,fill=black,inner sep=2pt,outer sep=0.5pt}}


\newtheorem*{thm:main}{Theorem \ref{thm:main}}
\newtheorem*{thm:main2}{Theorem \ref{thm:main2}}

\newtheorem{theorem}{Theorem}[section]
\newtheorem*{theorem*}{Theorem}
\newtheorem*{lemma*}{Lemma}
\newtheorem*{corollary*}{Corollary}
\newtheorem{proposition}[theorem]{Proposition}
\newtheorem{lemma}[theorem]{Lemma}
\newtheorem{claim}[theorem]{Claim}
\newtheorem{corollary}[theorem]{Corollary}

\theoremstyle{definition}
\newtheorem{definition}[theorem]{Definition}

\newtheorem{notation}[theorem]{Notation}

\newtheorem{remark}[theorem]{Remark}

\usepackage{hyperref}
 
\begin{document}

\title{Finitely presented subgroups of direct products of graphs of groups with free abelian vertex groups}

\author[M. Casals-Ruiz]{Montserrat Casals-Ruiz}
\address{Ikerbasque - Basque Foundation for Science and Matematika Saila,  UPV/EHU,  Sarriena s/n, 48940, Leioa, Bizkaia, Spain}
\email{montsecasals@gmail.com}

\author[J. Lopez de Gamiz Zearra]{Jone Lopez de Gamiz Zearra}
\address{Stevenson Center 1415, Vanderbilt University, 1326 Stevenson Center Ln, 37212, Nashville, Tennessee, USA}
\email{jone.lopez.de.gamiz.zearra@vanderbilt.edu}

\begin{abstract}
A result by Bridson, Howie, Miller, and Short states that if $S$ is a finitely presented subgroup of the direct product of free groups, then $S$ is virtually a nilpotent extension of a direct product of free groups. Moreover, if $S$ is a subgroup of type $FP_n$ of the direct product of $n$ free groups, then the nilpotent extension is finite, so $S$ is actually virtually the direct product of free groups.

In this paper, these results are generalized to $2$-dimensional coherent right-angled Artin groups. More precisely, we show that a finitely presented subgroup of the direct product of $2$-dimensional coherent RAAGs is still virtually a nilpotent extension of a direct product of subgroups. If $S$ is moreover a type $FP_n$ subgroup of the direct product of $n$ $2$-dimensional coherent RAAGs, then $S$ is commensurable to a kernel of a character of a direct product of subgroups.

Finally, we show that the multiple conjugacy problem and the membership problem are decidable for finitely presented subgroups of direct products of $2$-dimensional coherent RAAGs.
\end{abstract}

\maketitle

\section{Introduction}

In 1984 Baumslag and Roseblade characterized finitely presented subgroups of the direct product of two finitely generated free groups, showing that up to finite index, they are themselves a direct product of free groups. This result was generalized in a series of papers by Bridson, Howie, Miller, and Short, culminating in a characterization of subgroups of direct products of (limit groups over) free groups, assuming that the subgroups satisfy suitable finiteness properties. One of the main consequences of these structural results is that the main algorithmic problems are decidable for finitely presented subgroups of direct products of free groups. 

\emph{Right-angled Artin groups (RAAGs)} are defined by presentations where the relations are commutation of some pairs of generators and so the class of RAAGs extends the class of (direct products of) finitely generated free groups. In view of the previous results, one may wonder if finitely presented subgroups of RAAGs have a tame structure and, in particular, if the main algorithmic problems are decidable in that class. Unfortunately, this is not the case as Bridson showed in \cite{Bridson} that there is a right-angled Artin group $A$ and a finitely presented subgroup $S < A\times A$ for which the conjugacy and the membership problems are undecidable.

This work is part of a series that aims to describe the structure of finitely presented subgroups of the direct product of (limit groups over) coherent RAAGs. This programme was carried over in \cite{LopezDeGamiz} for the subclass of RAAGs whose finitely generated subgroups are again RAAGs, called \emph{Droms RAAGs}. Furthermore, in \cite{CasalsLopezdeGamiz} we began the study for the class of \emph{$2$-dimensional coherent RAAGs}. More precisely, we generalized Baumslag and Roseblade's result for free groups and we described the structure of finitely presented subgroups of the direct product of two $2$-dimensional coherent RAAGs: they are virtually abelian extensions of direct products.

The next natural step in this programme is to study subgroups of direct products of finitely many $2$-dimensional coherent RAAGs, thus extending the binary case to the finitary one. This paper addresses this problem and we prove the following:

\begin{theorem*}
Let $G_i$ be a $2$-dimensional coherent RAAG where $i\in \{1,\dots,n\}$, define $G$ to be $G_1\times \cdots \times G_n$ and let $S$ be a finitely presented full subdirect product of $G$. Define $L$ to be $L_1\times \cdots \times L_n$, where $L_i= S \cap G_i$.

Then $L < S < G$ and $G\slash L$ is virtually nilpotent. In particular, there is a subnormal series $$S_0=M_0 \lhd M_1 \lhd \dots \lhd M_{k-1} \lhd M_k$$ where $M_i\slash M_{i-1}$ is abelian, $S_0$ has finite index in $S$ and $M_k$ has finite index in $G$.
\end{theorem*}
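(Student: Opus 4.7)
The plan is to induct on the number of factors $n$. The base case $n = 2$ is supplied by \cite{CasalsLopezdeGamiz}: that theorem gives $S/L$ virtually abelian, and since $S$ surjects onto each $G_i$, the quotient $G_i/L_i$ is a quotient of $S/L$ and hence virtually abelian; therefore $G/L = G_1/L_1 \times G_2/L_2$ is virtually abelian, in particular virtually nilpotent. Assume $n \geq 3$ and that the theorem holds for direct products of fewer factors.

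I would first establish that $L \lhd G$ and that each $L_i$ is finitely generated. Normality: $L_i \lhd S$, and since $S$ surjects onto $G_i$, conjugation by any $g_i \in G_i$ is realized by conjugation by some lift $s \in S$, so $g_i L_i g_i^{-1} = L_i$ and $L_i \lhd G_i$. Finite generation of each $L_i$ follows, as in the $n=2$ case treated in \cite{CasalsLopezdeGamiz}, from the finite presentation of $S$ together with the subdirect structure, via an adaptation of Bridson-Howie-Miller-Short-type arguments.

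The heart of the inductive step is to show that $G/L$ is virtually nilpotent. Let $p' : G \to G_1 \times \cdots \times G_{n-1}$ be the projection that forgets the $n$-th factor; its restriction to $S$ has kernel $L_n$, which is finitely generated and so normally finitely generated in $S$, making $S' := p'(S) \cong S/L_n$ finitely presented. Thus $S'$ is a finitely presented full subdirect product of $G_1 \times \cdots \times G_{n-1}$, and the inductive hypothesis gives $(G_1 \times \cdots \times G_{n-1})/L'$ virtually nilpotent, where $L_i' = S' \cap G_i$. One checks $L_i \subseteq L_i'$ and that the assignment sending $g_i \in L_i'$ to the $G_n$-coordinate of any lift of $g_i$ to $S$ descends to an injection $L_i'/L_i \hookrightarrow G_n/L_n$. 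Running the same argument with each forgetful projection in turn should allow one to patch these outputs together and conclude that each $G_i/L_i$ is virtually nilpotent, and therefore $G/L = \prod_i G_i/L_i$ is virtually nilpotent.

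The subnormal series is then a soft consequence. Pick a nilpotent finite-index subgroup $N \leq G/L$, let $M_k \leq G$ be its preimage (so $[G:M_k] < \infty$ and $L \subset M_k$), and set $S_0 = S \cap M_k$, a finite-index subgroup of $S$ containing $L$. In the nilpotent group $M_k/L$ with upper central series $1 = Z_0 \lhd \cdots \lhd Z_c = M_k/L$, the subgroups $H_t := (S_0/L) \cdot Z_t$ form a chain from $S_0/L$ to $M_k/L$ with abelian successive quotients, and pulling back along $G \to G/L$ yields the required series $S_0 = M_0 \lhd M_1 \lhd \cdots \lhd M_k$. The main obstacle I anticipate is the patching step of the preceding paragraph: extensions of virtually nilpotent groups by virtually nilpotent groups need not be virtually nilpotent, so controlling the action of $G_i/L_i'$ on $L_i'/L_i$ and synthesising information across the different forgetful projections will require the fine structure of $2$-dimensional coherent RAAGs as graphs of groups with free abelian vertex groups, as signalled by the paper's title.
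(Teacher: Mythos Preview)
Your induction on $n$ has two genuine gaps, one of which is fatal.

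\textbf{The claim that each $L_i$ is finitely generated is false.} You assert this follows ``as in the $n=2$ case treated in \cite{CasalsLopezdeGamiz}'', but that paper does not prove it, and in fact it can fail already for $n=2$. Proposition~\ref{Proposition 0} says that any finitely generated nontrivial normal subgroup of $G_i$ has virtually cyclic quotient; yet Lemma~\ref{lem:G/L is virtually abelian} (which encodes the $n=2$ result) only gives that $G_i/L_i$ is virtually free abelian of rank at most~$2$, and the rank-$2$ case genuinely occurs. In that case $L_i$ is \emph{not} finitely generated. This breaks your inductive step: you use finite generation of $L_n$ to conclude that $S' = p'(S) \cong S/L_n$ is finitely presented, and without finite presentation of $S'$ you cannot invoke the inductive hypothesis.

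\textbf{The patching step is the whole theorem.} Even if $S'$ were finitely presented, what induction gives you is that $G_i/N_{I\setminus\{i,j\}}^{(i)}$ is virtually nilpotent for each $j\ne i$ (your $L_i'$ is exactly $N_{I\setminus\{i,n\}}^{(i)}$ in the paper's Notation~\ref{notation}). Passing from this to $G_i/L_i = G_i/N_{I\setminus\{i\}}^{(i)}$ virtually nilpotent is precisely the content of Claim~\ref{claim:virnilpotent} at the final step $|J|=n-1$, and that step requires condition~(2) of Theorem~\ref{thm:fp_subdirect_nilpotent quotient}: the existence of a finite-index $H_K<G_i$ with $[H_K,H_K]=[K,K]$. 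Establishing condition~(2) is the technical heart of the paper (Lemmas~\ref{lem:edges_finite_index}--\ref{lem:normal_subnormal}), and it genuinely uses the graph-of-groups structure of $G_i$. Your proposal flags this obstacle but supplies no mechanism; the observation $L_i'/L_i \hookrightarrow G_n/L_n$ is circular, since $G_n/L_n$ is one of the quotients whose virtual nilpotence you are trying to establish.

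The paper's route avoids both problems: it does not induct on $n$ and never needs $L_i$ finitely generated. Instead it inducts on $|J|$ inside a fixed $G_i$ (Claim~\ref{claim:virnilpotent}), starting from the base case $G_i/N_{\{k\}}^{(i)}$ virtually abelian (Corollary~\ref{lem:prop_1_subdirect}, which uses finite presentation of $S$ via Lemma~\ref{claim:Ki}) and climbing via Hall's nilpotence criterion together with the commutator-matching condition~(2).
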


Furthermore, we show that these finitely presented subgroups have good algorithmic behavior. Namely, we show the following:

\begin{corollary*}
Finitely presented full subdirect products of the direct product of $2$-dimensional coherent RAAGs have decidable multiple conjugacy and membership problems.   
\end{corollary*}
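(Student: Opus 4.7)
The plan is to bootstrap decidability of both problems in $S$ from known algorithmic properties of the factors $G_i$---namely, solvability of the multiple conjugacy problem, the subgroup membership problem, and computability of centralizers in $2$-dimensional coherent RAAGs---by systematically exploiting the structural data provided by the theorem above. That theorem yields a short exact sequence
\[
1 \to L \to S \to \bar S \to 1,
\]
where $L = L_1 \times \cdots \times L_n$ is a direct product of finitely generated normal subgroups $L_i \lhd G_i$, and where $\bar S = S/L$ embeds into the virtually nilpotent, hence virtually polycyclic, group $G/L$. The algorithmic theory of virtually polycyclic groups then supplies strong tools: decidable subgroup membership, effective coset enumeration, and finite presentability, all classical after Mal'cev and Baumslag--Cannonito--Miller.

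For the membership problem, given $g \in G$, I would first pass to $\bar g \in G/L$ and test $\bar g \in \bar S$ using the polycyclic membership algorithm. If this succeeds, I lift a witness to some $s \in S$; the question then reduces to whether $g s^{-1} \in L$, a coordinate-wise test $(g s^{-1})_i \in L_i$ for each $i$. The latter invokes the subgroup membership problem for the finitely generated subgroup $L_i$ of the $2$-dimensional coherent RAAG $G_i$, which is available by assumption on the class.

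For the multiple conjugacy problem on tuples $\mathbf{a}, \mathbf{b}$ in $S$, I would first solve multiple conjugacy in $G$: since $G = G_1 \times \cdots \times G_n$ this reduces coordinate-wise to each factor, and each factor is handled by hypothesis. If conjugators exist they form a coset $c \cdot C_G(\mathbf{a})$, with $C_G(\mathbf{a}) = \prod_i C_{G_i}(\mathbf{a}_i)$ a direct product of computable centralizers. It then remains to decide whether $c \cdot C_G(\mathbf{a}) \cap S \neq \varnothing$; this is handled by combining the membership algorithm from the previous paragraph with the observation that, modulo $L$, the problem descends to a coset intersection in the virtually polycyclic group $G/L$, which is decidable.

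The main obstacle I expect is the effective refinement of the structure theorem. Starting from a finite presentation of $S$ and generators inside $G$, one must compute generating sets for each $L_i = S \cap G_i$, obtain a finite presentation of $G/L$ so that the polycyclic algorithms become executable, and produce centralizer data in the factors $G_i$. Establishing these effective inputs, in particular computing $L_i$ from $S$ and certifying finite presentability of $G/L$, is the crux; once they are in hand, the bootstrapping through $1 \to L \to S \to \bar S \to 1$ cleanly yields both decidability statements.
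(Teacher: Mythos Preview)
Your plan is a genuine outline, but it differs substantially from the paper's argument and contains a gap you yourself flag without closing. You reduce both problems to explicit computations in the short exact sequence $1 \to L \to S \to \bar S \to 1$: for membership you need generators of each $L_i$ and a workable presentation of $G/L$; for multiple conjugacy you additionally need computable centralizers in each $G_i$ and a coset--intersection algorithm in $G/L$. You then say that ``establishing these effective inputs, in particular computing $L_i$ from $S$,'' is the crux, and stop. But this is exactly the hard step: from a finite generating set for $S$ inside $G$ there is no obvious procedure to produce generators of $L_i = S\cap G_i$, and without that the polycyclic machinery for $G/L$ never gets off the ground. So as written the proposal is not a proof but a conditional reduction.

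The paper avoids this obstruction entirely by choosing arguments that use only the \emph{existence} of $L$ with $G/L$ virtually nilpotent, never an explicit description. For multiple conjugacy it invokes the Bridson--Howie--Miller--Short criterion: if $\Gamma$ is bicombable and $L\lhd\Gamma$ with $\Gamma/L$ nilpotent, then any $H$ with $L<H<\Gamma$ has decidable multiple conjugacy. Since $2$-dimensional coherent RAAGs are CAT(0) (hence bicombable) and have unique roots, one passes to a finite-index subgroup where the quotient by $L$ is genuinely nilpotent, applies the criterion to $S\cap G$, and then lifts back to $S$ using uniqueness of roots. For membership the paper runs the classical two-sided search: enumerate words in the generators of $H$ to witness $g\in H$, and in parallel enumerate finite quotients of the ambient product to witness $g\notin H$; the latter terminates because $G/L$ is virtually nilpotent, hence subgroup separable, and $L\subset H$. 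Neither branch needs generators of $L$ or a presentation of $G/L$. Your route could perhaps be made to work, but it would require an independent algorithm to compute $L_i$ (or at least to decide membership in $L_i$) and a justification that centralizers in the factors are computable; the paper's route sidesteps all of that.
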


This corollary shows that Bridson's example of a right-angled Artin group $A$ and an algorithmically bad finitely presented subgroup of $A\times A$ is not $2$-dimensional coherent (and we conjecture that $A$ cannot be coherent).

In fact, our results apply to a wider class of groups, the class $\mathcal{G}$, which is the class of cyclic subgroup separable graphs of groups with free abelian vertex groups and cyclic edge groups that act faithfully on the associated Bass-Serre tree. This class contains, among others, $2$-dimensional coherent RAAGs and residually finite tubular groups with trivial center. Recall that a \emph{tubular group} is a finitely generated graph of groups with $\mathbb{Z}^2$ vertex groups and $\mathbb{Z}$ edge groups.

\begin{thm:main}
Let $G\coloneqq G_1 \times \dots \times G_n$ where $G_i\in \mathcal G$ is finitely generated and let $S<G$ be a finitely presented full subdirect product. Define $L$ to be $L_1\times \cdots \times L_n$, where $L_i= S \cap G_i$.

Then $L < S < G$ and $G\slash L$ is virtually nilpotent. In particular, there is a subnormal series $$S_0=M_0 \lhd M_1 \lhd \dots \lhd M_{k-1} \lhd M_k$$ where $M_i\slash M_{i-1}$ is abelian, $S_0$ has finite index in $S$ and $M_k$ has finite index in $G$.    
\end{thm:main}

We also consider subgroups with stronger finiteness conditions, namely type $FP_n$ subgroups, and show the following:

\begin{thm:main2}
Let $G\coloneqq G_1 \times \dots \times G_n$ where $G_i\in \mathcal G$ is finitely generated and let $S<G$ be a full subdirect product of type $FP_n$.

There there is $S_0$ a finite index subgroup of $S$ and $G_0$ a finite index subgroup of $G$ such that $S_0$ is normal in $G_0$ and $G_0 \slash S_0$ is free abelian.
\end{thm:main2}

The free abelian factor in the description of $S$ is directly related to the edge groups of the decomposition of the groups in our class. In particular, if we consider graphs of groups in $\mathcal G$ that admit a non-trivial free product decomposition, we deduce the following theorem and recover the result of Baumslag-Roseblade and Bridson-Howie-Miller-Short for direct products of free groups:

\begin{theorem*}
Let $\mathcal{G}^\prime$ be the subclass of $\mathcal{G}$ containing the groups which have a non-trivial free product decomposition and let $S$ be a subgroup of type $FP_n$ of the direct product of $n$ groups in the class $\mathcal{G}^\prime$. Then, $S$ is virtually the direct product of groups in $\mathcal{G}^\prime$.
\end{theorem*}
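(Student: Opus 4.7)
The plan is to apply Theorem \ref{thm:main2} and then, using the additional structure supplied by a non-trivial free product decomposition of each $G_i$, to strengthen its conclusion so that the free abelian quotient appearing there is forced to be trivial.

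By Theorem \ref{thm:main2}, there exist finite index subgroups $S_0 \leq S$ and $G_0 = G_0^1 \times \cdots \times G_0^n \leq G$, with each $G_0^i$ of finite index in $G_i$, such that $S_0$ is normal in $G_0$ and $Q := G_0/S_0$ is free abelian. By Kurosh's subgroup theorem, each $G_0^i$ is again a non-trivial free product, and hence $G_0^i \in \mathcal{G}'$. If $Q = 0$, then $S_0 = G_0$ has finite index in $G$, whence $S$ itself has finite index in $G$; setting $L_i := S \cap G_i$, each $L_i$ then has finite index in $G_i$, and by Stallings' theorem on ends each $L_i$ still splits as a non-trivial free product, so $L_i \in \mathcal{G}'$ and $S$ is virtually $L_1 \times \cdots \times L_n$, as required. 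Thus the task reduces to proving $Q = 0$.

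To this end, let $\chi : G_0 \twoheadrightarrow Q$ denote the quotient character and decompose it as $\chi = \chi_1 + \cdots + \chi_n$, where $\chi_i := \chi|_{G_0^i}$ is a character on $G_0^i$. Suppose for contradiction that some $\chi_i$ is non-zero; post-composing with a coordinate projection of $Q$ yields a non-trivial integer-valued character $\psi$ of $G_0^i$. The non-trivial free product splitting of $G_0^i$ provides an action on a Bass-Serre tree $T$ with trivial edge stabilizers. I would then run a Bieri-Stallings/Bieri-Neumann-Strebel style argument in the spirit of Bridson-Howie-Miller-Short's treatment of subdirect products of free groups: the type $FP_n$ hypothesis on $S$ combined with the tree action $G_0^i \curvearrowright T$ yields homological finiteness constraints on the kernels of certain projections of $S$, which turn out to be incompatible with the existence of the non-trivial character $\psi$. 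This produces the desired contradiction, so each $\chi_i$ vanishes and hence $Q = 0$.

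The main obstacle is the transfer of the Bieri-Stallings argument from the free group setting to the class $\mathcal{G}'$. In the classical case one uses the fact that the Cayley graph of a free quotient is a tree; here this is replaced by the Bass-Serre tree of the free product splitting of $G_0^i$, and the essential feature making the argument work is precisely the triviality of its edge stabilizers. Verifying that the combinatorial Morse theory on this tree, together with the graph of groups structure on each $G_i$ inherited from $\mathcal{G}$, still yields the required homological finiteness contradiction is the technical heart of the proof; once this is in place, the conclusion $Q = 0$ and thereby the theorem follow as described above.
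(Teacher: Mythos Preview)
Your reduction to showing $Q=0$ via Theorem \ref{thm:main2} is exactly the paper's route, but the step you flag as the ``technical heart'' is in fact immediate from work already in the paper, and no new Bieri--Stallings or Morse-theoretic argument is needed. Groups in $\mathcal{G}'$ admit a standard splitting whose edge groups are \emph{trivial} (namely the free product decomposition itself). Apply Lemma \ref{lem:edges_finite_index} with $M=S_0$: it produces a finite index subgroup $H\le G_0$ of the form $H=S_0\langle e_1^{r_1},\dots,e_n^{r_n}\rangle$, and since every $e_i$ is trivial this says $H=S_0$. Hence $S_0$ has finite index in $G$ and $Q=0$. This is precisely what the introduction means when it says that ``the free abelian factor in the description of $S$ is directly related to the edge groups.''

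The $\Sigma$-invariant argument you are gesturing at is not wrong; it is exactly the mechanism inside the proof of Lemma \ref{lem:edges_finite_index}. There one shows that if $\chi(E)$ failed to have finite index in $Q$ then some restriction $(\phi\circ\chi)|_{G_i}$ would lie in $\Sigma^1(G_i)$ while vanishing on an edge group, contradicting \cite{CashenLevitt}. When the edge groups are trivial, $E=1$ and the conclusion $[G_0:S_0]<\infty$ drops out directly. So you are not facing a genuine obstacle --- you are about to re-derive a lemma the paper already provides. Invoke Lemma \ref{lem:edges_finite_index} and the triviality of the edge groups, and your proof is complete.
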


It is worth mentioning the difficulties one encounters when extending the results from direct products of free groups to direct products of $2$-dimensional coherent RAAGs. They mainly come from the fact that, while free groups do not fiber, coherent RAAGs do and so they have nontrivial finitely generated normal subgroups that are not necessarily of finite index (although the quotient by the normal subgroup is always virtually abelian, see \cite{CasalsLopezdeGamiz2}). Moreover, the dimension of the coherent RAAG controls the dimension of the abelian group over which they may fiber. In particular, $2$-dimensional coherent RAAGs can only fiber over $\mathbb Z$ and this is the reason for the restriction on the dimension of the coherent RAAG. Indeed, when studying subgroups of the direct product of two coherent RAAGs in \cite{CasalsLopezdeGamiz}, we first show that any finitely presented subgroup is an extension of a direct product by a $\mathbb{Z}^m$-by-$\mathbb{Z}^m$ group, where $m$ is bounded by the dimension of the coherent RAAG. In the $2$-dimensional case, we then prove that this $\mathbb{Z}$-by-$\mathbb{Z}$ group is a quotient of a Baumslag-Solitar group, and using the structural theory of Baumslag-Solitar groups, we conclude that the $\mathbb{Z}$-by-$\mathbb{Z}$ group is in fact free abelian. In the current general setting of the direct product of $n$ $2$-dimensional coherent RAAGs, the proof involves studying groups of the form nilpotent-by-$\mathbb{Z}^n$ and one needs to show that the nilpotent subgroup is actually central.

The paper is organized as follows. In Section \ref{Section1}, we study general subdirect products of groups. In particular, we describe some conditions that determine that finitely presented full subdirect products are virtually nilpotent extensions of direct products of subgroups.

In Section \ref{Section2}, we restrict to the study of finitely presented subgroups of $2$-dimensional coherent RAAGs and more generally, of groups in the class $\mathcal G$. We prove that the conditions of the previous section hold for this class of groups and combining these results, we describe the structure of the finitely presented subgroups in Theorem \ref{thm:main}. 

In Section \ref{Section3}, we consider subgroups of type $FP_n$ and describe their structure in Theorem \ref{thm:main2}.

Finally, in Section \ref{Section4} we address the algorithmic problems and show that the multiple conjugacy and membership problems are decidable for finitely presented subgroups of direct products of groups in the class.

\section{Structure of subdirect products of groups}
\label{Section1}

In this section, we investigate the structure of full subdirect products of general groups. We define some normal subgroups using the projection maps into some of the factors and the associated kernels and determine sufficient conditions on these subgroups that imply a good structure for the finitely presented full subdirect products, namely they are virtually a nilpotent extensions of direct products of subgroups, see Theorem \ref{thm:fp_subdirect_nilpotent quotient}.

\begin{definition}[Full subdirect product]
A subgroup $H < A_1 \times \cdots \times A_n$ is called \emph{full} if $H$ intersects nontrivially each factor, i.e. $H \cap A_i \neq 1$ for all $i\in \{1, \dots, n\}$, and it is a \emph{subdirect product} if the natural epimorphism from $A_1 \times \cdots \times A_n$ to $A_i$ restricts to an epimorphism from $H$ to $A_i$ for each $i\in \{1, \dots, n\}$.   
\end{definition}

From now on, we will consider that $S < G_1\times \dots \times G_n$ is a full subdirect product.

\begin{notation}[Distinguished normal subgroups]\label{notation}

Let $I=\{1, \dots, n\}$.

For each (non-empty) subset $J \subset I$, we define $p_J$ to be the natural projection homomorphism

\[ p_J \colon S \to \prod\limits_{j\in J} G_j.\]

In addition, we denote by $K_i$ the kernel of the epimorphism $p_{\{i\}}\colon S \to G_i$ and by $L_i$ the subgroup $S\cap G_i < G_i$ for $i\in I$. 

If $J$ is a subset of $I \setminus \{i\}$, we denote by $N_J^{(i)}$ the subgroup $p_i(\ker(p_J)) < G_i$. Notice that $N_J^{(i)}$ is normal in $G_i$ since $p_{\{i\}}\colon S\to G_i$ is surjective and $\ker(p_J) \triangleleft S$. By convention, if $J$ equals $I$, then we set $N_J^{(i)}$ to be $\{1\}$.

For $k\in \{1, \dots, n-1\}$ let $b(k)$ be the number $\binom{n-1}{k}$ and let $J_1, \dots, J_{b(k)}$ be the subsets of $I\setminus \{i\}$ of size $k$. The \emph{$k$-union} $U(k)^{(i)}$ stands for the normal subgroup \[U(k)^{(i)}=N_{J_1}^{(i)}\cdots N_{J_{b(k)}}^{(i)}\] of $G_i$, whereas the \emph{$k$-intersection} $I(k)^{(i)}$ is $N_{J_1}^{(i)} \cap \ldots \cap N_{J_{b(k)}}^{(i)}$.

Similarly, for $J\subseteq I\setminus \{i\}$ we define $U_J(1)^{(i)}$ to be the subgroup $\prod_{l\in J} N_{\{l\}}^{(i)}$ and $I_J(|J|-1)^{(i)}$ to be the subgroup $\bigcap_{l\in J} N_{J\setminus \{l\}}^{(i)}$.
\end{notation}

We record the following basic observations about the subgroups $N_J^{(i)}$, $U(k), I(k)$ and $L_i$.

\begin{remark}[Basic properties of the distinguished normal subgroups]\label{rem:properties_N_J}\

    \begin{enumerate}[(i)]
        \item By definition, $\ker(p_J)$ equals $\{(g_1, \dots, g_n) \in S \mid g_j = 1 \text{ for all } j\in J\}$;
        \item if $J=I \setminus \{i\}$, then $N_J^{(i)}=L_i$;
        \item for all $J^\prime \subset J\subset I \setminus \{i\}$ we have that $N_{J}^{(i)} < N_{J^\prime}^{(i)}$. Moreover, $N_J^{(i)} < U(|J|)^{(i)}$ and $I(|J|)^{(i)} < N_J^{(i)}$.
        \item for all $J,J^\prime \subseteq I \setminus \{i\}$ we get that the commutator $[N_J^{(i)},N_{J^\prime}^{(i)}]$ lies in $N_{J \cup J'}^{(i)}$. This is a consequence of the fact that $[\ker(p_J), \ker(p_{J^\prime})]< \ker(p_{J\cup J^\prime})$. 
    \end{enumerate}
\end{remark}

\begin{lemma}[Relations between the distinguished normal subgroups]\label{lem:central}
In the above notation, $[U_J(1)^{(i)}, I_J(|J|-1)^{(i)}]< N_J^{(i)}.$
\end{lemma}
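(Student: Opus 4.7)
The plan is to reduce the claim to a single application of Remark \ref{rem:properties_N_J}(iv). The key observation driving everything is that for every $l \in J$ one has $\{l\} \cup (J \setminus \{l\}) = J$, so the subgroups $U_J(1)^{(i)}$ and $I_J(|J|-1)^{(i)}$ are set up precisely so that each single-index factor $N_{\{l\}}^{(i)}$ of $U_J(1)^{(i)}$ pairs with the ``complementary'' factor $N_{J \setminus \{l\}}^{(i)}$ containing $I_J(|J|-1)^{(i)}$, and the union of the two indexing sets is exactly $J$.

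Concretely, I would proceed in two steps. First, I would use the standard commutator identity $[ab, c] = [a, c]^b\,[b, c]$ together with the normality of $N_J^{(i)}$ in $G_i$ to reduce a general commutator $[u, y]$, with $u \in U_J(1)^{(i)}$ and $y \in I_J(|J|-1)^{(i)}$, to commutators of the form $[x, y]$ where $x \in N_{\{l\}}^{(i)}$ for some $l \in J$. Since by definition $U_J(1)^{(i)} = \prod_{l \in J} N_{\{l\}}^{(i)}$, any such $u$ is a finite product $x_1 \cdots x_m$ with each $x_j$ in some $N_{\{l_j\}}^{(i)}$ ($l_j \in J$), and iterating the identity expresses $[u, y]$ as a product of $G_i$-conjugates of the individual $[x_j, y]$; these conjugates stay inside $N_J^{(i)}$ by normality.

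Second, for each such simple commutator $[x, y]$ with $x \in N_{\{l\}}^{(i)}$, the definition $I_J(|J|-1)^{(i)} = \bigcap_{l' \in J} N_{J \setminus \{l'\}}^{(i)}$ forces in particular $y \in N_{J \setminus \{l\}}^{(i)}$, and Remark \ref{rem:properties_N_J}(iv) then yields
\[ [x, y] \;\in\; \bigl[N_{\{l\}}^{(i)},\, N_{J \setminus \{l\}}^{(i)}\bigr] \;\subseteq\; N_{\{l\} \cup (J\setminus\{l\})}^{(i)} \;=\; N_J^{(i)}, \]
which is exactly what is needed. I do not foresee any substantive obstacle: the lemma is essentially a bookkeeping statement that extracts the correct inclusion from Remark \ref{rem:properties_N_J}(iv), the only mild care being in the commutator expansion step and the use of the normality of $N_J^{(i)}$ in $G_i$ to absorb the conjugating factors.
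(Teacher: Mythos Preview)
Your proposal is correct and follows essentially the same approach as the paper's proof: both reduce to showing $[N_{\{l\}}^{(i)}, I_J(|J|-1)^{(i)}] \subseteq N_J^{(i)}$ for each $l \in J$ via Remark~\ref{rem:properties_N_J}(iv) together with the inclusion $I_J(|J|-1)^{(i)} \subseteq N_{J\setminus\{l\}}^{(i)}$, and then conclude by the definition of $U_J(1)^{(i)}$ as a product. You are simply more explicit than the paper about the commutator expansion and the role of normality of $N_J^{(i)}$ in absorbing the conjugates.
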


\begin{proof}
For each $l\in J$, since $I_J(|J|-1)^{(i)}$ is a subgroup of $ N_{J \setminus \{l\}}^{(i)}$, from Remark \ref{rem:properties_N_J} (iv) we get that \[[N_{\{l\}}^{(i)}, I_J(|J|-1)^{(i)}]< N_J^{(i)},\] so the result follows from the definition of $U_J(1)^{(i)}$.
\end{proof}

\begin{theorem}[Structure of finitely presented full subdirect products of groups]\label{thm:fp_subdirect_nilpotent quotient}
Let $S < G_1 \times \dots \times G_n$ be a full subdirect product.
Assume that $G_i$ satisfies that
\begin{itemize}
    \item[(1)] $G_i\slash N_{\{k\}}^{(i)}$ is virtually abelian for each $k\in I\setminus \{i\}$;
    \item[(2)] for each $J\subset I\setminus \{i\}$ and for each finite index subgroup $K<U_J(1)^{(i)}$, there is a finite index subgroup $H_K$ of $G_i$ such that $K < H_K$ and $[H_K,H_K]=[K,K]$.
\end{itemize}
Then $G_i \slash L_i$ is virtually nilpotent.
\end{theorem}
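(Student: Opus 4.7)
The plan is to prove by induction on $k = |J|$ the stronger claim that $G_i/N_J^{(i)}$ is virtually nilpotent for every non-empty $J \subset I\setminus\{i\}$; the case $J = I\setminus\{i\}$ yields the theorem since there $N_J^{(i)} = L_i$. The base case $k=1$ is immediate from hypothesis (1): $G_i/N_{\{j\}}^{(i)}$ is virtually abelian, hence virtually nilpotent.

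For the inductive step, fix $J \subset I\setminus\{i\}$ with $|J|=k \geq 2$ and assume the claim for all proper subsets. The inductive hypothesis applied to each $J\setminus\{j\}$ for $j\in J$, combined with the diagonal map $G_i \to \prod_{j\in J}G_i/N_{J\setminus\{j\}}^{(i)}$ of kernel $I_J(k-1)^{(i)}$, shows that $G_i/I_J(k-1)^{(i)}$ is virtually nilpotent. Next, Remark \ref{rem:properties_N_J}(iv) applied to distinct $j,l\in J$ (using $(J\setminus\{j\})\cup(J\setminus\{l\}) = J$) gives $[N_{J\setminus\{j\}}^{(i)}, N_{J\setminus\{l\}}^{(i)}] < N_J^{(i)}$, and combined with $I_J(k-1)^{(i)} < N_{J\setminus\{j\}}^{(i)}$ this implies that $A := I_J(k-1)^{(i)}/N_J^{(i)}$ is abelian. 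Finally, Lemma \ref{lem:central} gives that $U_J(1)^{(i)}$ centralizes $A$. The inductive step is therefore reduced to finding a \emph{finite-index} subgroup $H < G_i$ whose image in $G_i/N_J^{(i)}$ centralizes $A$: once such an $H$ is found, its image in $G_i/N_J^{(i)}$ is a central extension of a subgroup of the virtually nilpotent group $G_i/I_J(k-1)^{(i)}$ by the central abelian subgroup $A$, and since a central extension by an abelian group raises nilpotency class by at most one, this image is virtually nilpotent, hence so is $G_i/N_J^{(i)}$.

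Producing the finite-index $H$ is the crux, and is where hypothesis (2) enters, since $U_J(1)^{(i)}$ itself is not in general of finite index in $G_i$ — the quotient $G_i/U_J(1)^{(i)}$ is only virtually abelian, being a quotient of $G_i/N_{\{j\}}^{(i)}$. Applying (2) with $K = U_J(1)^{(i)}$ yields a finite-index $H_0 < G_i$ containing $U_J(1)^{(i)}$ with $[H_0, H_0] = [U_J(1)^{(i)}, U_J(1)^{(i)}]$. For $h \in H_0$ and $x \in I_J(k-1)^{(i)} \subset U_J(1)^{(i)} \subset H_0$, decomposing $h = h'u$ with $u\in U_J(1)^{(i)}$ and using the commutator identity $[h'u,x] = [h',x]^u[u,x]$ together with Lemma \ref{lem:central} (which gives $[u,x]\in N_J^{(i)}$), the task reduces to controlling $[h',x]$ modulo $N_J^{(i)}$ for $h'$ ranging over a set of coset representatives of $H_0/U_J(1)^{(i)}$. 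Iterating this reduction, combined with further applications of hypothesis (2) to proper subsets $J'\subsetneq J$ and the inductive hypothesis, should force all such commutators into $N_J^{(i)}$ after replacing $H_0$ by a further finite-index subgroup, completing the step. Converting the global commutator equality provided by (2) into the required centralization of $A$ by a finite-index subgroup of $G_i$ is the main obstacle of the proof.
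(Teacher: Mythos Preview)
Your overall strategy matches the paper: induction on $|J|$ to show $G_i/N_J^{(i)}$ is virtually nilpotent, with base case from hypothesis (1), and in the inductive step combining the virtually nilpotent quotient $G_i/I_J(k-1)^{(i)}$ with Lemma \ref{lem:central}. But the way you close the inductive step does not work, and you essentially say so yourself in the last paragraph.

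The gap is this: you try to promote the centralization $[U_J(1)^{(i)}, I_J(k-1)^{(i)}] < N_J^{(i)}$ to $[H, I_J(k-1)^{(i)}] < N_J^{(i)}$ for some finite-index $H<G_i$. Applying (2) with $K = U_J(1)^{(i)}$ gives $[H_0,H_0] = [U_J(1)^{(i)}, U_J(1)^{(i)}]$, but this only tells you that $[h,x]$ lies in $[U_J(1)^{(i)}, U_J(1)^{(i)}]$, not in $[U_J(1)^{(i)}, I_J(k-1)^{(i)}]$; there is no reason for $[U_J(1)^{(i)}, U_J(1)^{(i)}]$ to be contained in $N_J^{(i)}$. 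Your coset-representative decomposition does not help because $U_J(1)^{(i)}$ has infinite index in $H_0$, and the suggested ``iteration with proper subsets $J'$'' has no clear mechanism.

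The paper avoids this by never trying to centralize $A$ with a finite-index subgroup. Instead it sets $K_J \coloneqq M_J \cap U_J(1)^{(i)}$, where $M_J$ is a finite-index subgroup of $G_i$ that is nilpotent modulo $I_J(k-1)^{(i)}$. Then $K_J$ is finite index in $U_J(1)^{(i)}$, and $K_J/N_J^{(i)}$ is \emph{nilpotent} (nilpotent mod $I_J(k-1)^{(i)}$, with $I_J(k-1)^{(i)}/N_J^{(i)}$ central by Lemma \ref{lem:central}). Now apply (2) to this $K_J$ to get a finite-index $H_{K_J} < G_i$ with $[H_{K_J}, H_{K_J}] = [K_J, K_J]$; this forces $K_J \lhd H_{K_J}$ and makes $H_{K_J}/N_J^{(i)}[K_J,K_J]$ abelian. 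The inductive step is then finished not by a central-extension argument but by \emph{Hall's criterion} (a group with a nilpotent normal subgroup $N$ such that $G/[N,N]$ is nilpotent is itself nilpotent), applied with $N = K_J/N_J^{(i)}$ inside $H_{K_J}/N_J^{(i)}$. This is the key idea you are missing: hypothesis (2) is designed to feed Hall's criterion, not to produce a centralizer.
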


\begin{proof}

The proof is by induction on $|J|$ and it essentially uses Hall's criterion for nilpotence: a group $G$ is nilpotent whenever it has a normal subgroup $N$ such that $G\slash [N,N]$ and $N$ are nilpotent.

More precisely, we prove the following by induction on $|J|$:

\begin{claim}\label{claim:virnilpotent}
If $J \subseteq I \setminus \{i\},$ then $G_i \slash N_J^{(i)}$ is virtually nilpotent.
\end{claim}

\begin{proof}[Proof of Claim \ref{claim:virnilpotent}]

If $|J|=1$, then $G_i\slash N_J^{(i)}$ is virtually abelian by assumption, so in particular, it is virtually nilpotent. 

Now assume that $|J|=k$ and that the statement holds for subsets of $I\setminus \{i\}$ with less than $k$ elements. For each element $l$ in $I$ such that $l\notin J$, by induction on the size of the set $J\setminus \{l\}$ we have that $G_i \slash N_{J\setminus \{l\}}^{(i)}$ is virtually nilpotent, so there is $H_{J\setminus \{l\}}$ a finite index subgroup in $G_i$ such that $H_{J\setminus \{l\}} \slash N_{J\setminus \{l\}}^{(i)}$ is nilpotent, say of nilpotency class $c_l$. Let $M_J$ be the intersection $\bigcap_{l\notin J} H_{J\setminus \{l\}}$ and let $C$ be $\max \{ c_l \mid l\notin J\}$. Then $M_J$ has finite index in $G_i$ and the $C$-th term of the lower central series of $M_J$ is a subgroup of $N_{J\setminus \{l\}}^{(i)}$ for each $l$. Hence, its $C$-th term of the lower central series is a subgroup of their intersection, and therefore the group 
\[ \frac{M_J}{I_J(|J|-1)^{(i)}}\]
is nilpotent. Notice that if we define $K_J$ to be $M_J \cap U_J(1)^{(i)}$, then $K_J$ is a finite index subgroup of $U_J(1)^{(i)}$ and the subgroup $K_J \slash I_J(|J|-1)^{(i)}$ of $G_i\slash I_J(|J|-1)^{(i)}$ is nilpotent. 

Furthermore, $I_J(|J|-1)^{(i)}\slash N_J^{(i)}$ lies in the center of the group $U_J(1)^{(i)}\slash N_j^{(i)}$ (see Lemma \ref{lem:central}), so in particular, in the center of $K_J\slash N_J^{(i)}$. Hence, $K_J\slash N_J^{(i)}$ is nilpotent.

By assumption, there exists a finite index subgroup $H_{K_J}<G_i$ such that $K_J < H_{K_J}$ and $[H_{K_J},H_{K_J}]=[K_J,K_J]$. Since \[[H_{K_J},H_{K_J}]=[K_J,K_J]<K_J<H_{K_J}\] and $H_{K_J}\slash [H_{K_J},H_{K_J}]$ is abelian, we have that $K_J\slash [H_{K_J},H_{K_J}]$ is a normal subgroup of $H_{K_J}/[H_{K_J},H_{K_J}]$ and so $K_J$ is normal in $H_{K_J}$.

In conclusion, $K_J\slash N_J^{(i)}$ is a nilpotent normal subgroup of $H_{K_J}\slash N_J^{(i)}$ and 
\[(H_{K_J}\slash N_J^{(i)}) \slash [K_J \slash N_J^{(i)}, K_J\slash N_J^{(i)}] \cong H_{K_J}\slash N_J^{(i)}[K_J,K_J]=H_{K_J}\slash N_J^{(i)}[H_{K_J},H_{K_J}]\] 
is abelian. Hence, from Hall's criterion we conclude that $H_{K_J} \slash N_J^{(i)}$ is nilpotent and so $G_i\slash N_J^{(i)}$ is virtually nilpotent.
\end{proof}

In particular, if we take $J$ to be $I\setminus \{i\}$, then $N_J^{(i)}=L_i$ and so we conclude that $G_i\slash L_i$ is virtually nilpotent.

\end{proof}

\begin{lemma}[Virtually nilpotent quotients]\label{lem:justfg}
Let $S< G_1\times \cdots \times G_n$ be a full subdirect product and take $J\subseteq I\setminus \{i\}$. For each element $k\in J$, let $J_k = (J\setminus \{k\}) \cup \{i\}$. If $G_k \slash N_{J_k}^{(k)}$ is virtually nilpotent for each $k\in J$, then $G_i \slash N_J^{(i)}$ is virtually nilpotent.  
\end{lemma}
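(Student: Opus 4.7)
My plan is to realise $G_i/N_J^{(i)}$ as a quotient of a subgroup of the virtually nilpotent direct product $\prod_{k\in J} G_k/N_{J_k}^{(k)}$ given by the hypothesis.

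First, I would set up a Goursat-type identification. Let $D := p_J(S) \leq \prod_{k\in J} G_k$, which is a subdirect product, and let $T := p_J(K_i) = p_J(\ker p_{\{i\}})$, which is normal in $D$ since $K_i \trianglelefteq S$. Applying Goursat's lemma to the subdirect subgroup $p_{J\cup\{i\}}(S) \leq G_i \times D$ (whose intersections with the two factors are exactly $N_J^{(i)}$ and $T$), one obtains a canonical isomorphism $G_i/N_J^{(i)} \cong D/T$. It therefore suffices to prove that $D/T$ is virtually nilpotent.

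The key structural computation is to identify $T \cap G_k$ inside $\prod_{k'\in J} G_{k'}$. An element of $T$ supported only on the $k$-th coordinate corresponds to some $c \in K_i$ satisfying $p_{J\setminus\{k\}}(c)=1$, i.e.\ $c \in K_i \cap \ker(p_{J\setminus\{k\}}) = \ker(p_{J_k})$; hence $T \cap G_k = p_{\{k\}}(\ker p_{J_k}) = N_{J_k}^{(k)}$. Setting $N := \prod_{k\in J} N_{J_k}^{(k)}$, the fact that the factors $G_k$ pairwise commute inside $\prod_{k'\in J} G_{k'}$ together with each $N_{J_k}^{(k)} = T\cap G_k$ already lying in $T$ forces $N \leq T$ (not merely $N \leq \prod_k G_k$). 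Thus $N \leq T \leq D$.

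Finally I would consider the coordinate-wise quotient homomorphism $\phi \colon D \to \prod_{k\in J} G_k/N_{J_k}^{(k)}$. By hypothesis and since finite direct products of virtually nilpotent groups are virtually nilpotent, the target is virtually nilpotent. If $d \in \ker\phi$, then, viewing $d$ as a tuple in $\prod_k G_k$, every coordinate lies in $N_{J_k}^{(k)}$, so $d \in N$; combined with the inclusion $N \leq D$ this gives $\ker\phi = N$. Therefore $D/N$ embeds into a virtually nilpotent group and so is itself virtually nilpotent, and since $N \leq T$ the further quotient $D/T$ is virtually nilpotent as well. This yields $G_i/N_J^{(i)} \cong D/T$ virtually nilpotent, as required.

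The only delicate point I foresee is the inclusion $N \leq T$ in the middle step; once that is established, everything else is routine subdirect-product bookkeeping.
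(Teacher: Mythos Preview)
Your argument is correct. The Goursat identification $G_i/N_J^{(i)}\cong D/T$ is set up properly, the computation $T\cap G_k=N_{J_k}^{(k)}$ is right, and the inclusion $N\le T$ that you flag as delicate is indeed immediate once you note that each $N_{J_k}^{(k)}$ already lies in $T$ and the factors commute; the rest is closure of virtual nilpotence under subgroups and quotients.

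The paper takes a different, more hands-on route. Instead of passing through Goursat, it chooses finite-index subgroups $H_k\le G_k$ with $H_k/N_{J_k}^{(k)}$ nilpotent of class $n_k$, sets $n_0=\max_k n_k$, pulls $\prod_k H_k$ back to a finite-index subgroup $S_0\le S$, and pushes forward to $H:=p_{\{i\}}(S_0)\le G_i$. It then lifts arbitrary $x_1,\dots,x_{n_0}\in H$ to $S_0$ and computes the $n_0$-fold commutator coordinatewise: each $k$-coordinate lies in $\gamma_{n_0}(H_k)\subseteq N_{J_k}^{(k)}$, whence $[x_1,\dots,x_{n_0}]\in N_J^{(i)}$. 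The last implication is exactly your inclusion $N\le T$ in disguise, so the two proofs share the same core observation. What differs is packaging: the paper's computation yields an explicit bound on the nilpotency class of the finite-index subgroup of $G_i/N_J^{(i)}$ (namely $n_0$), while your structural argument trades that quantitative information for a cleaner one-line conclusion via an embedding into $\prod_k G_k/N_{J_k}^{(k)}$.
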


\begin{proof}
For each $k\in J$ take $H_k$ to be a finite index subgroup of $G_k$ such that $N_{J_k}^{(k)} < H_k$ and $H_k \slash N_{J_k}^{(k)}$ is nilpotent, say of nilpotency class $n_k$. Moreover, define $n_0$ to be $\max \{n_k \mid k\in J \}$.

Since $\bigoplus_{k\in J} H_k$ has finite index in $\bigoplus_{k\in J} G_k$, then defining $S_0$ to be \[p_J^{-1}\Big{(} \bigoplus_{k\in J} H_k \cap p_J(S)\Big{)},\] the subgroup $S_0$ has finite index in $S$, so $H\coloneqq  p_{\{i\}}(S_0)$ has finite index in $G_i$. The goal is to show that the $n_0$-term of the lower central series of $H$ lies in $N_J^{(i)}$.

Let $x_1,\dots, x_{n_0}\in H$. By the definition of $H$, for each $j\in \{1,\dots,n_0\}$ there is an element in $S$ of the form $(y_1^j,\dots, y_{i-1}^j,x_j,y_{i+1}^j,\dots, y_n^j)$ with the extra assumption that $y_k^j \in H_k$ if $k\in J.$ The $n_0$-fold
\[ [(y_1^1,\dots, y_{i-1}^1,x_1,y_{i+1}^1,\dots, y_n^1),\dots, (y_1^{n_0},\dots, y_{i-1}^{n_0},x_{n_0},y_{i+1}^{n_0},\dots, y_n^{n_0})]\]
equals
\[ ([y_1^1,\dots, y_1^{n_0}],\dots, [y_{i-1}^1,\dots, y_{i-1}^{n_0}], [x_1,\dots,x_{n_0}], [y_{i+1}^1,\dots, y_{i+1}^{n_0}], \dots, [y_{n}^1,\dots, y_{n}^{n_0}])\]
and by assumption $[y_j^1,\dots, y_j^{n_0}]$ belongs to $N_{J_k}^{(k)}$ if $k\in J$. Therefore, it follows that $[x_1,\dots, x_{n_0}]$ lies in $N_J^{(i)}$.
\end{proof}

\section{The class of cyclic subgroup separable graphs of groups with free
abelian vertex groups and cyclic edge groups}
\label{Section2}

Let $\mathcal{G}$ be the class of cyclic subgroup separable graphs of groups with free abelian vertex groups and infinite cyclic or trivial edge groups that act faithfully on the associated Bass-Serre tree. In \cite{CasalsLopezdeGamiz} we introduced this class of groups and we studied some properties of the actions.

The goal of this section is to review these results and to further explore finitely presented full subdirect products of groups in $\mathcal{G}$. More concretely,  we aim to prove that if $G_1,\dots, G_n$ are finitely generated groups in $\mathcal{G}$ and if $S$ is a finitely presented full subdirect product of $G_1\times \cdots \times G_n$, then $G_i \slash L_i$ is virtually nilpotent (see Theorem \ref{thm:main}). The strategy is to show that in this setting, the conditions (1) and (2) from Theorem \ref{thm:fp_subdirect_nilpotent quotient} are satisfied.

Let $G$ be a group in the class $\mathcal{G}$. Any splitting of $G$ as a graph of groups with free abelian vertex groups and cyclic edge groups is called a \emph{standard splitting} of $G$.

\begin{lemma}\label{Lemma 1}\cite[Lemma 3.3]{CasalsLopezdeGamiz}
Let $G$ be a finitely generated group in $\mathcal{G}$ and let $T$ be the Bass-Serre tree corresponding to a standard splitting of $G$. Suppose that $N$ is a non-trivial normal subgroup of $G$. Then, $N$ contains hyperbolic elements and it acts minimally on $T$.
\end{lemma}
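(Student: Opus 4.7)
The plan is to establish the two conclusions in turn. Without loss of generality I will assume the standard splitting is non-trivial (otherwise $T$ is a point and the statement is vacuous), so that $G$ contains hyperbolic elements, and I will also assume the $G$-action on $T$ is minimal, replacing $T$ with its minimal $G$-invariant subtree if necessary.

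Granting the existence of a hyperbolic element in $N$, the minimality of the $N$-action is immediate. Let $T_0 \subseteq T$ denote the union of the axes of the hyperbolic elements of $N$; this is a non-empty $N$-invariant subtree. For any $g \in G$ and any hyperbolic $n \in N$, the conjugate $gng^{-1}$ lies in $N$ by normality and is hyperbolic with axis $g \cdot \mathrm{axis}(n)$, so $T_0$ is $G$-invariant as well. By the minimality of the $G$-action, $T_0 = T$, which is exactly the statement that $N$ acts minimally on $T$.

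The heart of the proof, and the main obstacle, is exhibiting a hyperbolic element of $N$. Suppose for contradiction that every element of $N$ acts elliptically on $T$. Fix a non-trivial $n \in N$ and set $F := \mathrm{Fix}(n) \subseteq T$; this is a non-empty subtree, and by faithfulness of the $G$-action $F \neq T$. Since the $G$-action is minimal, the union of axes of the hyperbolic elements of $G$ equals $T$, so one can find a hyperbolic $h \in G$ whose axis $\ell_h$ is not contained in $F$. A ping-pong argument --- picking $k$ large and choosing the sign depending on how $F$ meets $\ell_h$ (any large $|k|$ works when $F \cap \ell_h$ is empty or a bounded segment; when $F \cap \ell_h$ is a half-line one chooses the sign so that $h^k$ translates $F$ away from itself) --- yields an integer $k$ for which the subtrees $h^k F$ and $F$ are disjoint. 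Then the commutator $[h^k, n] = (h^k n h^{-k}) \cdot n^{-1}$ is a product of two elliptic isometries with disjoint fixed subtrees $h^k F$ and $F$, and is therefore hyperbolic by the standard fact that such a product translates along the bridge connecting the two fixed subtrees. Since $[h^k, n] \in N$, this contradicts the assumption that $N$ contains no hyperbolic element, completing the proof.
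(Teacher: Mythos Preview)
The paper does not prove this lemma; it simply cites \cite[Lemma~3.3]{CasalsLopezdeGamiz}. Independently of that, your argument has a genuine gap in the half-line case.

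When $F\cap\ell_h$ is a half-line your assertion that one can choose the sign of $k$ so that $h^kF$ and $F$ become disjoint is false. Write $F\cap\ell_h=[a,\xi)$, where $\xi$ is the end of $\ell_h$ lying in $\partial F$. Since $h$ stabilises $\ell_h$ setwise and fixes both of its ends, $h^kF\cap\ell_h=h^k[a,\xi)=[h^k(a),\xi)$ for every $k\in\mathbb Z$; these rays are nested, so $h^kF\cap F\supseteq[\max(a,h^k a),\xi)\neq\emptyset$ for all $k$. Both elliptics $h^knh^{-k}$ and $n^{-1}$ then fix this common ray, hence so does $[h^k,n]$, which is therefore elliptic, not hyperbolic. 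A concrete model is $BS(1,2)=\langle a,t\mid tat^{-1}=a^2\rangle$: the fixed set of $a$ in the Bass--Serre tree contains a half-line of the axis of $t$, every hyperbolic axis shares a ray with it towards the fixed end, and each commutator $[t^k,a]$ lies in the (elliptic) normal closure of $a$.

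This obstruction is exactly why the standard route does not argue elementwise but via the global dichotomy for a group acting on a tree with no hyperbolic element: either $N$ has a global fixed vertex, or $N$ fixes a unique end of $T$. Your normality/minimality/faithfulness reasoning handles the first alternative. In the second, normality forces $G$ itself to fix that end, and one must then invoke the hypotheses defining $\mathcal G$ --- in particular cyclic subgroup separability, which is precisely what rules out ascending examples such as $BS(1,2)$ --- to reach a contradiction. Ping-pong with a single $h$ cannot see this case.
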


\begin{proposition}\label{Proposition 0}\cite[Proposition 3.4]{CasalsLopezdeGamiz}
Let $G$ be a finitely generated group in $\mathcal{G}$ and let $T$ be the Bass-Serre tree corresponding to a standard splitting of $G$. Suppose that $N$ is a non-trivial finitely generated normal subgroup of $G$. Then, $G \slash N$ is virtually cyclic. 

Furthermore, $N$ is a free product of free abelian groups whose ranks are bounded above by $r$, where $r$ is the maximum of the ranks of the (free abelian) vertex groups of $G$. In particular, if $G$ is a residually finite tubular group, $N$ is a finitely generated non-trivial normal subgroup of $G$ and $G\slash N$ is virtually $\mathbb Z$, we have that $N$ is a free group.
\end{proposition}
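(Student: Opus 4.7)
The plan is to combine Lemma \ref{Lemma 1} with a Bass--Serre analysis of the $N$-action on $T$, exploiting cyclic subgroup separability of $G$ and a cocompactness argument to control both the structure of $N$ and the quotient $G/N$. First I would invoke Lemma \ref{Lemma 1}: $N$ contains a hyperbolic element and acts minimally on $T$, so its vertex stabilizers $N \cap G_v$ are subgroups of the free abelian $G_v$ (hence free abelian of rank at most $r$), while its edge stabilizers $N \cap G_e$ are subgroups of cyclic groups.

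To extract the free-product decomposition, I would realize $N$ as the fundamental group of a graph of groups with trivial edge groups and vertex groups among the $N \cap G_v$'s. If every $N \cap G_e$ is already trivial this is immediate from Bass--Serre theory applied to the $N$-action on $T$; otherwise cyclic subgroup separability of $G$ would allow one to pass to a finite-index subgroup $G' < G$ whose edge stabilizers are avoided by $N \cap G'$, decompose $N \cap G'$ as a free product of finitely many free abelian groups of rank at most $r$ (together with a possible free factor coming from $\pi_1$ of the quotient graph), and lift the decomposition back to $N$. Finite generation of $N$ ensures only finitely many factors.

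For the virtual cyclicity of $G/N$, finite generation of $N$ together with the minimality of its action gives cocompactness of the $N$-action on $T$, so $N \backslash T$ is a finite graph on which $G/N$ acts. Crucially, because $N$ already contains hyperbolic elements translating along every direction (a consequence of normality combined with minimality), the induced $G/N$-action on $N\backslash T$ cannot introduce genuinely new translation directions; combined with the faithfulness of the $G$-action on $T$ that is built into the class $\mathcal{G}$, this forces $G/N$ to be virtually cyclic.

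For the tubular specialization, vertex groups are $\mathbb{Z}^2$ so free product factors of $N$ have rank at most $2$. If some $N \cap G_v$ were of rank $2$ then it would be of finite index in $G_v$, so the image of $G_v$ in $G/N$ would be finite; normality of $N$ propagates this to every conjugate of $G_v$, and combining residual finiteness of $G$ with the hypothesis that $G/N$ is virtually $\mathbb{Z}$ would contradict the fact that $G$ is not itself virtually $\mathbb{Z}$. Hence every factor of $N$ has rank at most $1$ and $N$ is free. The main obstacle will be the virtual cyclicity of $G/N$: the Bass--Serre structural analysis alone does not suffice, and one has to combine the cocompactness of the $N$-action with the faithfulness of the $G$-action to rule out higher-rank translation directions in the quotient.
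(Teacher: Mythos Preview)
The paper does not prove this proposition; it is quoted from \cite{CasalsLopezdeGamiz} and only used as input. So there is no in-paper proof to compare against, and your proposal must be judged on its own merits. There are genuine gaps in each of the three parts.

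\textbf{Virtual cyclicity of $G/N$.} Your setup is correct: $N$ finitely generated plus minimal implies the $N$-action on $T$ is cocompact, and $G/N$ does act on the finite graph $N\backslash T$. But the step from there to ``$G/N$ is virtually cyclic'' is not justified by talk of ``translation directions'' or faithfulness. What the finiteness of $N\backslash T$ actually gives you is a finite-index $G'\le G$ containing $N$ with $G'/N$ acting trivially on $N\backslash T$; this means $G'\subseteq N\cdot G_e$ for every edge $e$, hence $G'/N$ embeds in $G_e/(N\cap G_e)$, a quotient of a cyclic group. That is the point where the cyclic edge group hypothesis does the work, and it is absent from your sketch.

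\textbf{Free-product structure of $N$.} Your proposed use of cyclic subgroup separability fails: if $e^k\in N$ for some edge generator $e$, then for every finite-index $G'<G$ the edge stabilisers of $G'$ are finite-index in those of $G$ and still meet $N$ nontrivially. You cannot separate $N$ from the edge groups this way. The honest dichotomy is the one coming from the previous paragraph: if $N\cap G_e\neq 1$ for some $e$ then $G'/N$ embeds in a finite cyclic group, so $G/N$ is finite; if $G/N$ is infinite then $N\cap G_e=1$ for \emph{all} $e$ and Bass--Serre theory immediately gives the free-product decomposition with free abelian vertex groups.

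\textbf{Tubular specialisation.} Your contradiction (``residual finiteness plus $G/N$ virtually $\mathbb Z$ contradicts $G$ not virtually $\mathbb Z$'') does not follow: vertex groups having finite image in a virtually-$\mathbb Z$ quotient is perfectly consistent. The correct argument again goes through the edge groups: since $G/N$ is virtually $\mathbb Z$ (hence infinite), the embedding $G'/N\hookrightarrow G_e/(N\cap G_e)$ forces $N\cap G_e=1$ for every edge $e$. If some $N\cap G_v$ had rank $2$ it would have finite index in $G_v\cong\mathbb Z^2$ and therefore meet the adjacent edge groups nontrivially, a contradiction. Thus every $N\cap G_v$ is cyclic, all edge groups of the induced splitting of $N$ are trivial, and $N$ is free.
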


We next show some other properties that we will need when working with finitely presented full subdirect products of groups in $\mathcal{G}$.

\begin{lemma}[Finitely generated normal subgroups]\label{lemma:edgegroups}
Let $G$ be a finitely generated group in $\mathcal{G}$ and let $N$ be a normal subgroup of $G$ such that $G \slash N$ is virtually $\mathbb{Z}$. Suppose that for each edge group $\langle e \rangle$ of $G$ in the standard splitting of $G$, we have that $N \cap \langle e \rangle =1$. Then $N$ is finitely generated.
\end{lemma}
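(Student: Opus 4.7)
The plan is to realize $N$ as the fundamental group of a graph of groups induced by the action of $N$ on the Bass-Serre tree $T$ of the standard splitting of $G$, and then to show that the underlying graph $T\slash N$ is finite, so that $N$ is a finite graph of finitely generated groups.

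I first reduce to $G\slash N \iso \Z$ by taking the preimage in $G$ of an infinite cyclic finite-index subgroup of $G\slash N$. The resulting finite-index $G_0<G$ contains $N$, inherits a standard splitting (its edge groups are finite-index subgroups of those of $G$, hence still cyclic and still intersecting $N$ trivially), and finite generation of $N$ is insensitive to the ambient group. Let $\phi\colon G\to \Z$ denote the resulting epimorphism; the hypothesis now reads that $\phi$ is injective on each edge group. Since $N$ is normal, for every edge $\tilde e$ of $T$ with $\mathrm{Stab}_G(\tilde e) = g\langle e\rangle g^{-1}$ I have $\mathrm{Stab}_N(\tilde e) = g(N\cap\langle e\rangle)g^{-1} = 1$, so by Bass-Serre theory $N = \pi_1(\mathcal G_N)$, where $\mathcal G_N$ has underlying graph $T\slash N$, vertex groups $N \cap G_v^{g_v}$ (subgroups of finitely generated free abelian groups, hence finitely generated), and trivial edge groups.

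It remains to show that $T\slash N$ is finite. Because $N \lhd G$ and $G\slash N \iso \Z$, the $N$-orbits of $v$-type vertices of $T$ biject with $\Z\slash \phi(G_v)$ and the $N$-orbits of $e$-type edges with $\Z\slash\phi(\langle e\rangle)$; the latter is finite by the hypothesis on edge groups. Since each edge group embeds into the vertex groups at its endpoints and $\phi$ is injective (hence nontrivial) on every nontrivial cyclic edge group, $\phi|_{G_v}$ is nontrivial for every vertex $v$ incident to a nontrivial edge group, so $\Z\slash\phi(G_v)$ is also finite. Hence $T\slash N$ is a finite graph, and $\mathcal G_N$ has finite underlying graph, finitely generated vertex groups, and trivial edge groups, so $N$ is finitely generated.

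The main technical point is the bootstrap from $\phi(\langle e\rangle)\ne 0$ to $\phi(G_v)\ne 0$: this uses that every nontrivial vertex group is incident to some edge with a nontrivial edge group, which one expects for a reduced standard splitting of a group in $\mathcal{G}$; the exceptional case where $G$ is itself free abelian (a single vertex, no edges) is trivial, since $N$ is then automatically finitely generated as a subgroup of a finitely generated abelian group.
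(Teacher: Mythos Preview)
Your argument is essentially the paper's: both show that the quotient graph $T/N$ is finite by computing the number of $N$-orbits of edges of each type as $|N\backslash G/\langle e\rangle| = |G/N\langle e\rangle|$, which is finite because $\langle e\rangle$ injects into the virtually-$\mathbb{Z}$ quotient $G/N$ and hence $N\langle e\rangle$ has finite index in $G$. The paper neither reduces first to $G/N\cong\mathbb{Z}$ nor treats vertex orbits separately: since $T/N$ is connected, finiteness of the edge set already forces finiteness of the vertex set, so your closing ``bootstrap'' discussion (and the assertion that every nontrivial vertex group meets a nontrivial edge group, which fails e.g.\ for $\mathbb{Z}*\mathbb{Z}$) is not needed.
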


\begin{proof}
Let $T$ be the Bass-Serre tree corresponding to the splitting of $G$. Since $N$ is a subgroup of $G$, it also acts on $T$. If we check that $T \slash N$ is finite, then $N$ is finitely generated because the vertex groups are finitely generated abelian.

Hence, it suffices to show that the number of edges in $T \slash N$ is finite. That is, it is enough to show that \[ \mathlarger{\sum} |N \backslash G \slash \langle e \rangle| < \infty,\]
where the sum is taken over the edge groups in the standard splitting of $G$.

For each edge group $\langle e \rangle$ in the splitting of $G$, as $N$ is normal in $G$ we have that \[ | N \backslash G \slash \langle e \rangle|= | G \slash N\langle e \rangle |.\]
Note that $N \langle e \rangle$ has finite index in $G$ because by assumption $G \slash N$ is virtually $\mathbb{Z}$ and $N \cap \langle e \rangle=1$. Hence, it follows that $T\slash N$ is finite and $N$ is finitely generated.
\end{proof}

\begin{lemma}[Quotients by normal subgroups with finitely generated cyclic extensions]\label{lem:G/L is virtually abelian}
Let $G$ be a finitely generated group in $\mathcal{G}$ and let $T$ be the Bass-Serre tree corresponding to a standard splitting of $G$.

Let $N$ and $H$ be subgroups of $G$ such that $N < H< G$, $N$ is normal in $G$ (not necessarily finitely generated) and $H$ is finitely generated and $H\slash N$ is virtually cyclic. 

Then $G\slash N$ is virtually free abelian (of rank less than or equal to 2). Moreover, in the case where $G \slash N$ is virtually $\mathbb{Z}^2$, the group $H$ is a finitely generated free group.
\end{lemma}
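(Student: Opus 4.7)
The plan is to combine Bass--Serre theory for $H$ acting on $T$ with Lemma~\ref{lemma:edgegroups} and Proposition~\ref{Proposition 0}. First I would reduce to the case where $N$ is non-trivial, so that Lemma~\ref{Lemma 1} guarantees $N$ contains hyperbolic elements and acts minimally on $T$; hence so does $H \supseteq N$. By Bass--Serre theory, $H$ then inherits a decomposition as the fundamental group of a finite graph of groups with free abelian vertex groups $H \cap gG_vg^{-1}$ and cyclic edge groups $H \cap gG_eg^{-1}$. The argument then splits into two cases depending on how $N$ intersects these edge stabilizers.

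\textbf{Case 1: $N$ meets every edge stabilizer of the $H$-action trivially.} Here $H/N$ is either finite (in which case $N$ has finite index in $H$ and is finitely generated) or virtually $\mathbb{Z}$ (in which case the proof of Lemma~\ref{lemma:edgegroups} applies verbatim to $H$---that proof uses only the graph-of-groups structure and the edge-group hypothesis, not cyclic subgroup separability---yielding again that $N$ is finitely generated). Either way, $N$ is a finitely generated normal subgroup of $G$, and Proposition~\ref{Proposition 0} gives that $G/N$ is virtually cyclic, hence virtually $\mathbb{Z}^k$ with $k \leq 1$.

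\textbf{Case 2: $N$ meets some edge stabilizer non-trivially.} Since such stabilizers are cyclic and $N$ is normal in $G$, $N$ must contain a finite-index subgroup of every $G$-conjugate of some edge group $\langle e \rangle$ of the standard splitting of $G$. Passing to a finite-index subgroup of $G$, one may assume that the normal closure $\langle\langle e \rangle\rangle_G$ is contained in $N$. Quotienting by this normal closure yields a graph-of-groups decomposition of $G/\langle\langle e\rangle\rangle_G$ with smaller (still free abelian) vertex groups and a trivial edge group at $e$; moreover $N/\langle\langle e\rangle\rangle_G$ and $H/\langle\langle e\rangle\rangle_G$ satisfy the same hypotheses relative to the reduced group. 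Iterating the case analysis on the reduced decomposition eventually reaches Case~1. Tracking how the successively trivialized edges contribute to the abelianization, together with the virtually cyclic output of Case~1, bounds the rank of $G/N$ by~$2$.

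\textbf{Main obstacle.} The principal technical difficulty lies in Case~2: in justifying rigorously that the iterated quotients produce a rank at most~$2$ rather than a larger rank coming from the first Betti number of the underlying graph of the splitting. The virtually cyclic hypothesis on $H/N$ is essential here in controlling the ``base'' rank contribution. For the ``moreover'' assertion, in the case where $G/N$ is virtually $\mathbb{Z}^2$, I would argue that the vertex groups in $H$'s graph-of-groups decomposition on $T$ must all be absorbed into $N$ (otherwise their contribution to $G/N$ would push the rank above~$2$), forcing $H$ to be the fundamental group of a graph of groups with trivial vertex and edge groups, i.e., a finitely generated free group.
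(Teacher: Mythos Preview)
Your outline takes a genuinely different route from the paper's, and the gap you flag in Case~2 is more serious than you suggest. The reduction ``pass to a finite-index subgroup of $G$ so that $\langle\langle e\rangle\rangle_G\subseteq N$'' does not work: you only know $e^k\in N$ for some $k>0$, and no finite-index subgroup of $G$ puts $e$ itself into $N$. If instead you quotient by $\langle\langle e^k\rangle\rangle_G$, the edge group at $e$ becomes $\mathbb{Z}/k\mathbb{Z}$ and the quotient leaves the class $\mathcal G$ (and need not act faithfully on its Bass--Serre tree), so the iteration cannot continue. More fundamentally, nothing in your sketch actually produces the rank bound of~$2$: each edge you collapse can contribute a $\mathbb{Z}$-summand to $G/N$, and you have not explained how the virtually cyclic hypothesis on $H/N$ prevents these from accumulating. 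There is also an unflagged gap in Case~1: the edge stabilisers $H_{\tilde e}=H\cap g\langle e\rangle g^{-1}$ of the $H$-action may well be trivial (nothing in the hypotheses forces $H$ to meet every conjugate of an edge group non-trivially), and when $H_{\tilde e}=1$ the count $|H/NH_{\tilde e}|=|H/N|$ is infinite, so the proof of Lemma~\ref{lemma:edgegroups} does not transfer verbatim to $H$.

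The paper avoids all of this by a different decomposition. From the finite generation of $H$ (equivalently, the finiteness of $T/H$) it first extracts, for each edge group $\langle e\rangle$ of $G$, a double-coset finiteness statement: $G$ is a finite union $\bigcup_i \langle N,y\rangle\, g_i\,\langle e\rangle$, where $\langle N,y\rangle$ has finite index in $H$ (\cite[Lemma~4.6]{CasalsLopezdeGamiz}). The case split is then on the \emph{vertex} groups rather than on edge intersections: if some image $G_vN/N$ fails to be virtually cyclic, the double-coset finiteness forces $G/N$ to be virtually abelian of rank at most~$2$ and simultaneously yields the freeness of $H$ (\cite[Lemma~4.7]{CasalsLopezdeGamiz}); if every $G_vN/N$ is virtually cyclic, a separate argument gives $G/N$ virtually cyclic (\cite[Lemma~4.8]{CasalsLopezdeGamiz}). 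The bound~$2$ thus arises because $G/N$ is sandwiched between the cyclic quotient $\langle N,y\rangle/N$ and cyclic edge-group images, not from any inductive count of collapsed edges.
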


\begin{proof}
Since $H \slash N$ is by assumption virtually cyclic, there exists a finite index subgroup $H^\prime<H$ and an element $y\in H^\prime$ such that $H^\prime \slash N =\langle y\rangle$, i.e. $H^\prime=\langle N, y\rangle$.

The proof follows from results in \cite{CasalsLopezdeGamiz} replacing the notation of the normal subgroup $L_2$ by $N$. More precisely, in \cite[Lemma 4.6]{CasalsLopezdeGamiz} we show that for each edge group $\langle e \rangle$ of $G$, the group $G$ is a finite union of double cosets of the subgroups $\langle N,y\rangle$ and $\langle e \rangle$. After that, in \cite[Lemma 4.7]{CasalsLopezdeGamiz}, we prove that if there is a vertex stabilizer $G_v$ in $T$ such that $G_v N\slash N$ is not virtually cyclic, then $G \slash N$ is virtually abelian (with free rank bounded by 2). In \cite[Lemma 4.8]{CasalsLopezdeGamiz}, we analyze the other alternative and show that if for each vertex stabilizer $G_v$ in $T$ we have that $G_vN\slash N$ is virtually cyclic, then $G \slash N$ is virtually abelian (with free rank bounded by 1). We also show that in the case where $G \slash N$ is virtually $\mathbb{Z}^2$, then the group $H$ is free.
\end{proof}

The following result is not required to show that the hypotheses of Theorem \ref{thm:fp_subdirect_nilpotent quotient} hold, but we find the statement quite surprising.

\begin{lemma}[Quotients by finitely generated normal subgroups]\label{lem:fg_normal}
Let $G \coloneqq G_1 \times \dots \times G_n$ where $G_i\in \mathcal G$ is finitely generated. If $N < G$ is a finitely generated full normal subgroup, then $G\slash N$ is virtually abelian.
\end{lemma}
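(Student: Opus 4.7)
My plan is to reduce the lemma to showing, for each $i$, that the image of $G_i$ in $G/N$ is virtually abelian; writing $N_i := N \cap G_i$, this image is $G_i/N_i$. The reduction works because the images of the different $G_i$ in $G/N$ commute pairwise (since the factors commute in $G$) and generate $G/N$, so taking a finite-index abelian subgroup $A_i$ of each $G_i/N_i$ and multiplying them yields an abelian subgroup of $G/N$ of finite index.

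The heart of the argument is a commutator identity: writing $p_i\colon G \to G_i$ for the $i$-th coordinate projection, I claim $[G_i, p_i(N)] \subseteq N_i$. To see this, pick $g \in p_i(N)$ with lift $(g_1,\ldots,g_n) \in N$ (so $g_i = g$), and view $x \in G_i$ as the element of $G$ supported in coordinate $i$. Because the other factors centralize $G_i$, the conjugate $x(g_1,\ldots,g_n)x^{-1}$ agrees with $(g_1,\ldots,g_n)$ outside the $i$-th coordinate; normality of $N$ forces this conjugate into $N$, and multiplying by $(g_1,\ldots,g_n)^{-1}$ produces the element $([x,g],1,\ldots,1) \in N \cap G_i = N_i$. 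In other words, $p_i(N)/N_i$ is central in $G_i/N_i$.

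To finish, I invoke Proposition \ref{Proposition 0}: the subgroup $p_i(N)$ is finitely generated (as a homomorphic image of $N$) and nontrivial (it contains $N_i$, which is nontrivial by fullness), so $G_i/p_i(N)$ is virtually cyclic. Therefore $G_i/N_i$ is a central extension of a virtually cyclic group by an abelian subgroup; pulling back any finite-index cyclic subgroup of $G_i/p_i(N)$ to $G_i/N_i$ yields a finite-index subgroup that is itself a central extension of a cyclic group, and therefore abelian. This shows $G_i/N_i$ is virtually abelian, and the global conclusion follows from the first paragraph. The only real obstacle is spotting the commutator identity in the second step; once that is in hand, the remaining steps are a routine combination of Proposition \ref{Proposition 0} with a standard central-extension observation.
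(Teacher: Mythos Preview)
Your argument is correct and follows essentially the same route as the paper's proof: both reduce to showing that each $G_i/(N\cap G_i)$ is virtually abelian, establish that $p_i(N)/(N\cap G_i)$ is central in $G_i/(N\cap G_i)$, invoke Proposition~\ref{Proposition 0} to see that $G_i/p_i(N)$ is virtually cyclic, and conclude via the resulting central extension. The only cosmetic difference is that you verify centrality by a direct commutator computation in $G$, whereas the paper phrases the same calculation in terms of a homomorphism $\phi\colon G\to Q$ with $\ker\phi=N$; the content is identical.
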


\begin{proof}
Let us denote by $\pi_i \colon G \mapsto G_i$ the natural projection homomorphism for each $i\in \{1,\dots,n\}$. Then, recalling that we denote $N\cap G_i$ by $L_i$ for $i\in \{1,\dots,n\}$, we have a chain of groups
\[ L_1 \times \cdots \times L_n < N < \pi_1(N)\times \cdots \times \pi_n(N)< G.\]
The aim is to show that for any $i\in \{1,\dots,n\}$ the quotient group $G_i \slash L_i$ is virtually abelian. We do this for $i=1$ as the other cases are identical.

Since $N$ is a normal subgroup of $G$, we view $N$ as the kernel of a homomorphism, that is, there is a group $Q$ and a homomorphism $\phi \colon G \to Q$ such that $\ker \phi= N$.

Firstly, note that the subgroup $\pi_1(N)$ is non-trivial, finitely generated and normal in $G_1$, so from Proposition \ref{Proposition 0} we deduce that $G_1 \slash \pi_1(N)$ is virtually cyclic, so there is $H_1$ a finite index subgroup in $G_1$ such that $H_1 \slash \pi_1(N)$ is cyclic.

Secondly, the group $\pi_1(N)\slash L_1$ lies in the center of the group $G_1 \slash L_1$. Indeed, let us take $x\in \pi_1(N)$ and $y\in G_1$. Note that showing that $x$ and $y$ commute in $G_1 \slash L_1$ is equivalent to showing that $\phi(x)\phi(y)= \phi(y)\phi(x)$. Since $x$ is an element of $\pi_1(N)$, there is an element of the form $(x,y_2,\dots,y_n)$ in $N$. Thus, $\phi(x)= \phi((y_2,\dots,y_n))$. As a consequence,
\[ \phi(x)\phi(y)=\phi((y_2,\dots,y_n))\phi(y)=\phi(y)\phi((y_2,\dots, y_n))=\phi(y)\phi(x),\]
where the second equality holds because $y$ and $(y_2,\dots,y_n)$ commute. To sum up, we have showed that the group $\pi_1(N) \slash L_1$ is central in the group $G_1\slash L_1$, so in particular it is abelian and central in $H_1 \slash L_1$. 

As a consequence, we have a central short exact sequence
\[ 1 \to \pi_1(N) \slash L_1 \to H_1 \slash L_1 \to H_1 \slash \pi_1(N) \to 1,\]
and $H_1 \slash \pi_1(N)$ is either finite or infinite cyclic. In the former case, $H_1 \slash L_1$ is virtually abelian, and in the latter case, we get that $H_1 \slash L_1$ is the semidirect product of $\pi_1(N)\slash L_1$ and an infinite cylic group with $\pi_1(N)\slash L_1$ central, so it is also virtually abelian.
\end{proof}

\begin{remark}
The previous lemma shows that under the above conditions, $G\slash N$ is finite-by-abelian. Nevertheless, in contrast to what happens in the case of right-angled Artin groups (see \cite[Theorem 2.5]{CasalsLopezdeGamiz2}), $G\slash N$ may not be abelian-by-finite.

For instance, let $G=\langle a, t \mid a^t = a^{-1}\rangle$ and $N= \langle t^2 \rangle$. Then, $G \slash N$ is the infinite dihedral group $D_\infty$, so it is isomorphic to $\mathbb{Z} \rtimes \mathbb{Z}_2$.
\end{remark}

Coming back to our goal, we first aim to prove condition (1) from Theorem \ref{thm:fp_subdirect_nilpotent quotient}, i.e. that the quotient $G_i\slash N_{\{k\}}^{(i)}$ is virtually abelian.

\begin{lemma}[Finite generation of cyclic extensions of kernels]\label{claim:Ki}
Let $G\coloneqq G_1 \times \dots \times G_n$ where $G_i\in \mathcal G$ is finitely generated. Let $S<G$ be a finitely presented full subdirect product. Then, for each cyclic (or trivial) edge group $\langle e \rangle$ of $G_i$ in its standard splitting, there is $s_1\in \langle e \rangle$ and a lift $\hat {s}_1 \in S$ such that $\langle K_i, \hat{s}_1 \rangle$ is finitely generated, where $K_i$ is the kernel of the epimorphism $p_{\{i\}}:S \to G_i$.  
\end{lemma}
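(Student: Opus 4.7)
The plan is to identify the subgroup $\langle K_i,\hat{s}_1\rangle$ with the $S$-stabilizer of an edge of the Bass--Serre tree of the standard splitting of $G_i$, and then to deduce finite generation of that stabilizer from the finite presentability of $S$.

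The standard splitting equips $G_i$ with a faithful cocompact action on its Bass--Serre tree $T_i$, whose edge stabilizers are the (conjugates of) cyclic or trivial edge groups. Via $p_i\colon S\twoheadrightarrow G_i$, the group $S$ acts on $T_i$ cocompactly with the same finite quotient graph. For each edge $\tilde{e}$ of $T_i$ whose $G_i$-stabilizer is $\langle e\rangle$, the $S$-stabilizer is $p_i^{-1}(\langle e\rangle)$ and sits in the short exact sequence
\[ 1 \to K_i \to p_i^{-1}(\langle e\rangle) \to \langle e\rangle \to 1. \]
Choosing $s_1$ to be a generator of $\langle e\rangle$ (and $s_1=1$ if $\langle e\rangle$ is trivial) and any lift $\hat{s}_1\in p_i^{-1}(\langle e\rangle)$, every element of $p_i^{-1}(\langle e\rangle)$ has the form $k\hat{s}_1^n$ with $k\in K_i$ and $n\in\mathbb{Z}$. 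Hence $p_i^{-1}(\langle e\rangle)=\langle K_i,\hat{s}_1\rangle$, and the lemma reduces to showing that the edge stabilizer $p_i^{-1}(\langle e\rangle)$ is finitely generated.

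To establish finite generation, I would use that $G_i$ is itself finitely presented (being a graph of groups over a finite graph with finitely generated free abelian vertex groups and cyclic edge groups), so $K_i=\ker(p_i)$ is the normal closure in $S$ of finitely many elements $w_1,\dots,w_r$ coming from a finite presentation of $S$. Bass--Serre theory gives $S$ a decomposition as the fundamental group of a graph of groups over the finite underlying graph of $G_i$, with edge groups $p_i^{-1}(\langle e\rangle)$. A careful normal-form analysis for $S$ acting on $T_i$---combined with the cyclic structure of $\langle e\rangle$ and the freedom to replace $s_1$ by a suitable positive power of the edge generator---should show that the $S$-conjugates of the $w_j$ needed to generate $K_i$ can all be realized as $\hat{s}_1$-conjugates. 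This would exhibit $\{w_1,\dots,w_r,\hat{s}_1\}$ as a finite generating set for $\langle K_i,\hat{s}_1\rangle$.

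The main obstacle is precisely this last step: finite presentability of $S$ by itself does \emph{not} force edge stabilizers of arbitrary cocompact tree actions to be finitely generated (the vertex groups $p_i^{-1}(A_v)$ contain $K_i$ and thus need not be finitely generated), so one cannot simply invoke the standard theorem that finitely presented graphs of groups with finitely generated vertex groups have finitely generated edge groups. The proof must therefore exploit the rigid structure of $G_i\in\mathcal{G}$ (free abelian vertices, cyclic edges) together with the subdirect embedding $S\subset G_1\times\dots\times G_n$; auxiliary tools such as Lemma~\ref{lemma:edgegroups} (on finitely generated normal subgroups whose intersections with edge groups are trivial) and Proposition~\ref{Proposition 0} are natural ingredients for controlling $K_i$ modulo $\hat{s}_1$-conjugation.
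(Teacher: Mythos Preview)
Your identification $\langle K_i,\hat s_1\rangle=p_i^{-1}(\langle e\rangle)$ is correct and is implicitly the starting point of the paper as well. However, the proof you sketch has a genuine gap: the ``careful normal-form analysis'' you allude to does not follow from the hypotheses you list, and the auxiliary results you propose (Lemma~\ref{lemma:edgegroups} and Proposition~\ref{Proposition 0}) concern normal subgroups of a single factor $G_i$, not the kernel $K_i\subset\prod_{j\neq i}G_j$; they give no control over $K_i$ modulo $\hat s_1$-conjugation. As you yourself note, finite presentability of $S$ alone does not force edge groups of the induced splitting to be finitely generated, so something extra is needed---and you have not identified what that is.

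The missing idea is the use of the \emph{fullness} hypothesis to produce a hyperbolic element $t\in L_i=S\cap G_i$ (via Lemma~\ref{Lemma 1}). This $t$ has two crucial features: it lies in $S$, and it centralises $K_i$ (since $K_i\subset\prod_{j\neq i}G_j$). One then invokes cyclic subgroup separability of $G_i$ together with \cite[Theorem~3.1]{BridsonHowie} to pass to a finite index subgroup $M<G_i$ that is an HNN extension with stable letter $t$ and associated subgroup $\langle s_1\rangle<\langle e\rangle$. The preimage $M'=p_i^{-1}(M)$ is finitely presented (finite index in $S$) and is itself an HNN extension with stable letter $t\in S$ and associated subgroup $\langle K_i,\hat s_1\rangle$; the centralising property of $t$ is exactly what lets one verify this HNN structure on the preimage. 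Finally, Miller's lemma \cite[Lemma~2]{Miller}---that a finitely presented HNN extension has finitely generated associated subgroups---yields the conclusion. None of these three ingredients (the hyperbolic $t\in L_i$, the Bridson--Howie finite-index HNN, and Miller's lemma) appears in your proposal, and without them the argument does not close.
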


\begin{proof}
Let $\langle e \rangle <G_i$ be an edge group of $G_i$ and consider the splitting of $G_i$ with respect to this edge. As $L_i= S \cap G_i$ is non-trivial and normal in $G_i$, from Lemma \ref{Lemma 1} we have that it contains a hyperbolic element, say $t\in L_i$. Since $t$ is hyperbolic with respect to the splitting of $G_i$ over $\langle e\rangle$, its axis contains an edge with corresponding edge group $\langle e ^g \rangle$ for some $g\in G_i$. The subgroup $L_i$ is normal in $G_i$, so up to conjugacy we can assume that $g$ equals $1$.

The group $G_i$ is by definition cyclic subgroup separable, so from \cite[Theorem 3.1]{BridsonHowie} we get that there is a finite index subgroup $M$ of $G_i$ which is an HNN extension with stable letter $t$ and associated subgroup $\langle s_1 \rangle$ for some $s_1\in \langle e\rangle$. As $G_i$ is finitely generated, $M$ is also finitely generated, namely, $M= \langle t, s_1,\dots, s_m \rangle$ with $s_1\in \langle e\rangle$ and $s_2= t^{-1}s_1 t$.

Let us denote by $M^\prime$ the preimage of $M$ in $S$. Then, taking into account that $M$ has finite index in $G_i$, the subgroup $M^\prime$ also has finite index in $S$, and since $S$ is finitely presented, so is $M^\prime$. The HNN decomposition of $M$ induces a decomposition of $M^\prime$ as an HNN extension. Let us pick $\hat{s}_j\in S$ such that $p_{\{i\}}(\hat{s}_j)= s_j$ for $j\in \{1,\dots,m\}$ with the restriction that $\hat{s}_2$ needs to be $t^{-1} \hat{s}_1 t$. Note that, by definition, $t$ is an element in $S$. We then have that
\[ M^\prime= \langle K_i, \hat{s}_1,\dots, \hat{s}_m, t \mid t^{-1}\hat{s}_1 t= \hat{s}_2, t^{-1}bt=b, \forall b\in K_i, \mathcal{R} \rangle,\]
where $\mathcal{R}$ is a set of relations in the elements $K_i \cup \{\hat{s}_1,\dots,\hat{s}_m\}$. Since $M^\prime$ is finitely generated, there are $a_1,\dots,a_k$ in $K_i$ such that
\[ M^\prime= \langle a_1,\dots,a_k, \hat{s}_1,\dots, \hat{s}_m, t \mid t^{-1}\hat{s}_1 t= \hat{s}_2, t^{-1}bt=b, \forall b\in K_i, \mathcal{R} \rangle.
\]
Let $D$ be the subgroup $\langle a_1,\dots,a_k, \hat{s}_1, \dots, \hat{s}_m \rangle$. We next show that $K_i < D$. The subgroup $K_i$ is finitely generated as a normal subgroup of $M^\prime$ because $M$ is finitely presented, and so it is generated by the conjugates $a_i^g$ for $g\in M^\prime$. Note that if $x$ is an element in $K_i$, since $t$ is an element in $L_i$, then $x^{t^n}=x$ for all $n\in \mathbb{Z}$, so in fact, $K_i$ is generated by the conjugates of $a_i$ by elements in $\langle a_1,\dots,a_k, \hat{s}_1,\dots,\hat{s}_m \rangle$.

In conclusion,
\[ M^\prime= \langle D, t \mid t^{-1}\hat{s}_1 t= \hat{s}_2, t^{-1}bt=b, \forall b\in K_i \rangle.
\]
To conclude, $M^\prime$ is finitely generated, so by \cite[Lemma 2]{Miller} we deduce that $\langle K_i, \hat{s}_1 \rangle$ is finitely generated, and $\hat{s}_1$ is a lift of $s_1\in \langle e \rangle$.
\end{proof}

\begin{remark}\label{remark:SES}
Notice that if $K_i$ is not finitely generated, then $H_0( \langle \hat{s}_1 \rangle; H_1(K_i; \mathbb{Q}))$ is finite-dimensional. Indeed, by Lemma \ref{claim:Ki}, there is an element $\hat{s}_1\in S$ such that $\langle K_i, \hat{s}_1 \rangle$ is finitely generated. If $K_i$ is not finitely generated, then the intersection $\langle \hat{s}_1 \rangle \cap K_i$ is trivial. From the short exact sequence
\[ 1 \to K_i \to \langle K_i, \hat{s}_1 \rangle \to \langle \hat{s}_1 \rangle \to 1  \]
we have that the associated Lyndon-Hochschild-Serre spectral sequence is a $2$-column spectral sequence, so $E^\infty$ coincides with $E^2$. Since $H_1(\langle K_i, \hat{s}_1 \rangle; \mathbb{Q})$ is finite-dimensional, then
\[ E^\infty_{0,1}= E^2_{0,1}= H_0( \langle \hat{s}_1 \rangle; H_1(K_i; \mathbb{Q}))\]
is also finite-dimensional.
\end{remark}

\begin{corollary}\label{lem:prop_1_subdirect}
In the Notation \ref{notation}, for each $i\in I$ and each $l\in I\setminus \{i\}$ we have that $G_i\slash N_{\{l\}}^{(i)}$ is virtually abelian (of rank less than or equal to 2).     
\end{corollary}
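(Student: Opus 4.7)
The plan is to invoke Lemma \ref{lem:G/L is virtually abelian} for the pair $N := N_{\{l\}}^{(i)} \lhd G_i$ (normality of $N$ in $G_i$ is already recorded in Notation \ref{notation}, since $p_{\{i\}}\colon S \to G_i$ is surjective and $\ker(p_{\{l\}})$ is normal in $S$). What must be produced, then, is a finitely generated intermediate subgroup $H$ with $N < H < G_i$ and $H/N$ virtually cyclic; the lemma will then conclude that $G_i/N$ is virtually free abelian of rank at most $2$, which is exactly the claim.

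The source of such an $H$ is Lemma \ref{claim:Ki}, applied with the roles of the indices permuted so that the factor under consideration is $G_l$ rather than $G_i$. Note that the hypotheses of that lemma depend only on fullness and finite presentability of $S$, both of which are symmetric in the indexing, so the lemma applies verbatim to $G_l$: choosing any cyclic (or trivial) edge group $\langle e \rangle$ in the standard splitting of $G_l$, we obtain an element $s_1 \in \langle e \rangle$ together with a lift $\hat{s}_1 \in S$ satisfying $p_{\{l\}}(\hat{s}_1) = s_1$ such that $\langle K_l, \hat{s}_1 \rangle$ is finitely generated.

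Projecting this finitely generated subgroup into the $i$-th factor produces the desired $H$. Explicitly, set
\[
H \,:=\, p_{\{i\}}\bigl(\langle K_l, \hat{s}_1 \rangle\bigr) \,=\, \bigl\langle\, N_{\{l\}}^{(i)},\, p_{\{i\}}(\hat{s}_1)\,\bigr\rangle,
\]
using that $p_{\{i\}}(K_l) = N_{\{l\}}^{(i)}$ by definition. Then $H$ is finitely generated (as the image of a finitely generated group), it contains $N_{\{l\}}^{(i)}$, and the quotient $H/N_{\{l\}}^{(i)}$ is cyclic, generated by the image of $p_{\{i\}}(\hat{s}_1)$. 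Lemma \ref{lem:G/L is virtually abelian} then delivers the corollary, including the rank bound.

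I do not expect a serious obstacle: the argument is essentially a one-line reduction once one notices that the cyclic-extension statement of Lemma \ref{claim:Ki} for the factor $G_l$ pushes forward along $p_{\{i\}}$ to a cyclic extension of $N_{\{l\}}^{(i)}$ inside $G_i$. The one point deserving a sentence of justification is that Lemma \ref{claim:Ki} truly applies with $l$ in place of $i$; but as noted, only the fullness of $S$ (and hence $L_l \neq 1$, which feeds into Lemma \ref{Lemma 1}) and the finite presentability of $S$ are used in its proof, and both are symmetric in the factors.
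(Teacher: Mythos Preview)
Your proof is correct and follows essentially the same route as the paper: apply Lemma~\ref{claim:Ki} to the factor $G_l$ to obtain a finitely generated $\langle K_l,\hat{s}_1\rangle$, project via $p_{\{i\}}$ to get a finitely generated cyclic extension of $N_{\{l\}}^{(i)}$ inside $G_i$, and then invoke Lemma~\ref{lem:G/L is virtually abelian}. The only difference is expository---you make the index swap in Lemma~\ref{claim:Ki} explicit, which the paper leaves implicit.
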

 
\begin{proof}

Let $i\in I$ and $l\in I \setminus \{i\}$. Let $\langle e \rangle < G_l$ be an edge group. By Lemma \ref{claim:Ki}, there is $s_1\in \langle e\rangle$ and $\hat{s}_1 \in S$ a lift of $s_1$ such that $\langle K_l, \hat{s}_1\rangle < S$ is finitely generated. 

Since $\langle K_l, \hat{s}_1\rangle < S$ is finitely generated, so is $p_{\{i\}}(\langle K_l, \hat{s}_1 \rangle)= \langle N_{\{l\}}^{(i)}, p_{\{i\}}(\hat{s}_1) \rangle < G_i$. 

We have a chain of subgroups \[N_{\{l\}}^{(i)} < \langle N_{\{l\}}^{(i)}, p_{\{i\}}(\hat{s}_1) \rangle < G_i,\] where the subgroup $N_{\{l\}}^{(i)}$ is normal in $G_i$, the subgroup $\langle N_{\{l\}}^{(i)}, p_{\{i\}}(\hat{s}_1) \rangle$ is finitely generated and $\langle N_{\{l\}}^{(i)}, p_{\{i\}}(\hat{s}_1) \rangle \slash N_{\{l\}}^{(i)}$ is cyclic. Thus, from Lemma \ref{lem:G/L is virtually abelian} we conclude that $G_i \slash N_{\{l\}}^{(i)}$ is virtually abelian.
\end{proof}

Showing that condition (2) from Theorem \ref{thm:fp_subdirect_nilpotent quotient} holds will require more work. We start by simplifying the condition by checking that it is enough to consider the groups $U_J(1)^{(i)}$ that are finitely generated.

\begin{lemma}[$U_{J_k}(1)^{(k)}$ are finitely generated]\label{lemma:UJfg}
In the Notation \ref{notation}, we have that $G_i \slash U_J(1)^{(i)}$ is either finite or virtually $\mathbb{Z}$ for each $i\in I$ and each $J\subset I\setminus \{i\}$ with $|J|\ge 2$.

Moreover, if $G_i \slash U_J(1)^{(i)}$ is virtually $\mathbb{Z}$ and $U_J(1)^{(i)}$ is not finitely generated, then the groups $U_{J_k}(1)^{(k)}$ are finitely generated for all $k\in J$, where $J_k= (J\setminus \{k\}) \cup \{i\}$.    
\end{lemma}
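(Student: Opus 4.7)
The key observation is that for any $k\in J$ the preimages in $S$ coincide:
$$p_{\{i\}}^{-1}(U_J(1)^{(i)}) \;=\; K_i\cdot\prod_{l\in J}\ker(p_{\{l\}}) \;=\; \ker(p_{\{k\}})\cdot \ker(p_{\{i\}})\cdot\prod_{l\in J\setminus\{k\}}\ker(p_{\{l\}}) \;=\; p_{\{k\}}^{-1}(U_{J_k}(1)^{(k)}),$$
since $k\in J$ and $i\in J_k$ ensure each of $\ker(p_{\{k\}})$ and $\ker(p_{\{i\}})=K_i$ is accounted for on both sides (normal subgroups of $S$ may be reordered in products). Denoting this common normal subgroup of $S$ by $M$, we obtain the canonical isomorphism
$$G_i/U_J(1)^{(i)} \;\cong\; S/M \;\cong\; G_k/U_{J_k}(1)^{(k)}, \qquad k\in J. \qquad(\star)$$

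For the first statement, since $U_J(1)^{(i)} \supseteq N_{\{l\}}^{(i)}$ for any $l\in J$, Corollary \ref{lem:prop_1_subdirect} implies that $Q := G_i/U_J(1)^{(i)}$ is virtually free abelian of rank at most two; the task is to rule out rank two. If $U_J(1)^{(i)}$ is finitely generated, then being nontrivial (it contains $L_i\ne 1$) and normal in $G_i$, Proposition \ref{Proposition 0} forces $Q$ to be virtually cyclic. If instead some $U_{J_k}(1)^{(k)}$ with $k\in J$ is finitely generated, the same Proposition applied in $G_k$ gives $G_k/U_{J_k}(1)^{(k)}$ virtually cyclic, which by $(\star)$ transfers to $Q$. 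In the remaining degenerate case, where neither $U_J(1)^{(i)}$ nor any $U_{J_k}(1)^{(k)}$ is finitely generated, rank two is excluded using Lemma \ref{lem:G/L is virtually abelian} applied to the finitely generated extensions $H_l=\langle N_{\{l\}}^{(i)},p_i(\hat s_1^{(l)})\rangle$ produced by Lemma \ref{claim:Ki}: if $Q$ were virtually $\mathbb Z^2$, each $G_i/N_{\{l\}}^{(i)}$ would necessarily be virtually $\mathbb Z^2$ and each $H_l$ would be a finitely generated free group, and the resulting constraint (namely that $N_{\{l\}}^{(i)}$ has finite index in $U_J(1)^{(i)}$ for every $l\in J$) combined with the hypothesis $|J|\ge 2$ would force either finite generation of $U_J(1)^{(i)}$ or a nontrivial edge-group intersection simultaneously for $G_i$ and for each $G_k$, contradicting the combined consequence of $(\star)$ and Lemma \ref{lemma:edgegroups}.

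For the moreover part, assume $Q$ is virtually $\mathbb Z$ and $U_J(1)^{(i)}$ is not finitely generated. By $(\star)$, $G_k/U_{J_k}(1)^{(k)}$ is virtually $\mathbb Z$ for every $k\in J$, so Lemma \ref{lemma:edgegroups} will give finite generation of $U_{J_k}(1)^{(k)}$ once we verify that $U_{J_k}(1)^{(k)}$ meets every edge group $\langle e_k\rangle$ of $G_k$ trivially. Suppose for contradiction that a nontrivial power $e_k^n$ lies in $U_{J_k}(1)^{(k)}$; writing $e_k^n=\prod_{m\in J_k} n_m$ with $n_m\in N_{\{m\}}^{(k)}$, each $n_m$ lifts to $\tilde n_m\in \ker(p_{\{m\}})$, giving a distinguished element $\tilde e\in M$ with $p_k(\tilde e)=e_k^n$. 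Projecting $\tilde e$ to $G_i$ lands in $U_J(1)^{(i)}$, and applying Lemma \ref{claim:Ki} to the edge group $\langle e_k\rangle$ of $G_k$ provides a finitely generated cyclic extension $\langle K_k,\hat s_1^{(k,e)}\rangle$ of $K_k$ inside $S$ whose structure, combined with $p_k(\tilde e)\in\langle e_k\rangle$, constrains $\tilde e$ to lie in a finitely generated piece; this contradicts the assumed non-finite generation of $U_J(1)^{(i)}=p_i(M)$, completing the argument.

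The main obstacle is the rank-$2$ exclusion in the first statement in the degenerate case where all the relevant $U$-type subgroups fail to be finitely generated; here Proposition \ref{Proposition 0} cannot be applied directly, and one must genuinely exploit the multi-index hypothesis $|J|\ge 2$ together with the Bass-Serre tree geometry of the groups in $\mathcal G$ via a careful interplay of Lemmas \ref{lem:G/L is virtually abelian} and \ref{lemma:edgegroups} under the isomorphism $(\star)$.
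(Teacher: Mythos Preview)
Your isomorphism $(\star)$ is correct and a clean way to transfer information between factors; the paper uses it implicitly. The cases of part one where some $U_J(1)^{(i)}$ or $U_{J_k}(1)^{(k)}$ is finitely generated are also handled correctly via Proposition~\ref{Proposition 0}.

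However, there are two genuine gaps.

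\textbf{Part one, the degenerate case.} Your argument here is not a proof. You assert that the finite-index inclusions $N_{\{l\}}^{(i)}\subset U_J(1)^{(i)}$ together with $|J|\ge 2$ ``force either finite generation \dots\ or a nontrivial edge-group intersection simultaneously for $G_i$ and for each $G_k$, contradicting the combined consequence of $(\star)$ and Lemma~\ref{lemma:edgegroups}.'' But Lemma~\ref{lemma:edgegroups} applies only when the quotient is virtually $\mathbb Z$, whereas in this case the quotient is (by hypothesis) virtually $\mathbb Z^2$ on \emph{both} sides of $(\star)$, so that lemma yields nothing. The paper's actual argument is considerably more delicate: it produces an element $x\in N_{\{l\}}^{(i)}\cap N_{\{k\}}^{(i)}$ commuting with an edge generator $e_i$, lifts it in two ways to $S$, uses the freeness from Lemma~\ref{lem:G/L is virtually abelian} to show certain commutators are nontrivial, and then conjugates to produce an infinite-dimensional family in $H_1(K_i;\mathbb Q)$, contradicting the finite-dimensionality of $H_0(\langle \hat s_1\rangle; H_1(K_i;\mathbb Q))$ established in Remark~\ref{remark:SES}. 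None of this machinery appears in your sketch, and I do not see how to bypass it.

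\textbf{The moreover part.} Your strategy is to show that $U_{J_k}(1)^{(k)}$ meets every edge group of $G_k$ trivially and then invoke Lemma~\ref{lemma:edgegroups}. This cannot work: the conclusion you seek to prove is in general false. Indeed, in the paper's own proof one of the three terminal cases is that $U_{J_k}(1)^{(k)}$ has \emph{finite index} in $G_k$, in which case it contains a power of every edge generator. Moreover, your contradiction argument (``$\tilde e$ lies in a finitely generated piece; this contradicts the non-finite generation of $U_J(1)^{(i)}$'') is a non sequitur: one element lying in a finitely generated subgroup says nothing about finite generation of the ambient group. The paper proceeds in the opposite direction: it uses the non-finite generation of $U_J(1)^{(i)}$ together with Lemma~\ref{lemma:edgegroups} to find an edge group of $G_i$ meeting $U_J(1)^{(i)}$, lifts that edge element via Lemma~\ref{claim:Ki}, and then analyses the chain $N_{\{i\}}^{(k)} < \langle N_{\{i\}}^{(k)}, x_k\rangle < U_{J_k}(1)^{(k)} < G_k$ directly, using that $G_k/N_{\{i\}}^{(k)}$ has rank at most two to conclude finite generation in each of three possible configurations.
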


\begin{proof}
In order to show the first part of the statement, since for any two elements $l, k$ of $J$ we have the chain $N_{\{k\}}^{(i)} < U_{\{l,k\}}(1)^{(i)} < G_i$ and $G_i \slash N_{\{k\}}^{(i)}$ is virtually abelian of rank at most 2 (see Corollary \ref{lem:prop_1_subdirect}), it suffices to show that $G_i \slash U_{\{l,k\}}(1)^{(i)}$ cannot be virtually $\mathbb{Z}^2$.

Suppose towards contradiction that $G_i \slash U_{\{l,k\}}(1)^{(i)}$ is virtually $\mathbb{Z}^2$. From the above it follows that $N_{\{l\}}^{(i)}$ and $N_{\{k\}}^{(i)}$ have finite index in $ U_{\{l,k\}}(1)^{(i)}$.

By Lemma \ref{claim:Ki}, for each edge group $\langle e_i \rangle$ of $G_i$ there is $s_i\in \langle e_i\rangle$ and a lift $\hat{s}_i$ of the form $(x_1,\dots, x_{i-1}, s_i, x_{i+1},\dots, x_n)\in S$ such that $\langle K_i, \hat{s}_i \rangle$ is finitely generated.

Let us first show that there is $x\in N_{\{l\}}^{(i)}$ such that $[x, e_i]=1$. Let $\langle e_i' \rangle$ be another edge group of $G_i$ such that $[e_i, e_i']=1$. 

If $\langle N_{\{l\}}^{(i)}, e_i, e_i' \rangle \slash N_{\{l\}}^{(i)}$ is not isomorphic to $\mathbb Z^2$, then there exists $x\in \langle e_i, e_i'\rangle$ such that $x\in N_{\{l\}}^{(i)}$ and $[x,e_i]=1$.

Otherwise, if $\langle N_{\{l\}}^{(i)}, e_i, e_i' \rangle \slash N_{\{l\}}^{(i)} \cong \mathbb Z^2$, then it follows that $\langle N_{\{l\}}^{(i)}, e_i, e_i' \rangle$ has finite index in $G_i$. Let $A$ be a free abelian vertex group such that $\langle e_i \rangle <A$ and let $a$ be an element in $A$ with the property that $a \notin \langle e_i, e'_i \rangle$. Then, since $\langle N_{\{l\}}^{(i)}, e_i, e_i' \rangle$ has finite index in $G_i$, there is $x\in \langle a,e_i, e_i \rangle$ (and so $[x, e_i]=1$) such that $x\in N_{\{l\}}^{(i)}$.

In conclusion, we have proved that there is $x\in N_{\{l\}}^{(i)}$ such that $[x, e_i]=1$.

Since $N_{\{l\}}^{(i)} \cap N_{\{k\}}^{(i)}$ has finite index in $N_{\{l\}}^{(i)}$, taking a power of $x$ (if neccesary) we may assume that $x\in N_{\{l\}}^{(i)} \cap N_{\{k\}}^{(i)}$. By the definitions this implies that there are elements
\[ (z_1,\dots, z_n)\in S \quad \text{and} \quad (y_1,\dots,y_n)\in S \]
where $z_i=x$, $z_l=1$, $y_i=x$ and $y_k=1$.

Recall that $\hat{s}_i$ equals $(x_1,\dots, x_{i-1}, s_i, x_{i+1},\dots, x_n)\in S$. We claim that $[z_k, x_k]\ne 1$ and $[y_l, x_l]\ne 1$.

Indeed, suppose by contradiction that $[z_k, x_k]=1$. Note that $z_k \in N_{\{i\}}^{(k)} \subseteq \langle N_{\{i\}}^{(k)}, x_k \rangle$. Since $\langle K_i, \hat{s}_i \rangle$ is finitely generated, so is $\langle N_{\{i\}}^{(k)}, x_k \rangle$. The group $\langle N_{\{i\}}^{(k)}, x_k \rangle$ is free (see Lemma \ref{lem:G/L is virtually abelian}), so if $[x_k,z_k]=1$, then we have that $z_k= x_k^n$ for some $n\in \mathbb{Z}$. But if $x_k^n= z_k \in N_{\{i\}}^{(k)}$ and $\langle N_{\{i\}}^{(k)}, x_k \rangle$ is finitely generated, then $N_{\{i\}}^{(k)}$ is of finite index in $\langle N_{\{i\}}^{(k)}, x_k \rangle$ and so it is also finitely generated. This is a contradiction since
\[ G_i \slash N_{\{k\}}^{(i)} \cong  G_k \slash N_{\{i\}}^{(k)},\]
which is isomorphic to $\mathbb Z^2$ by our standing assumption. Nevertheless, by Proposition \ref{Proposition 0} finitely generated normal subgroups are co-(virtually cyclic). Therefore, we have shown that $[z_k, x_k]\neq 1$. The argument to check that $[y_l, x_l]\neq 1$ is symmetric.

Now for each $m\in \mathbb{Z}$ we conjugate $(z_1,\dots, z_n)$ by $(x_1,\dots, x_{i-1}, s_i, x_{i+1},\dots, x_n)^m$ to get
\[ \mathbf z=(z_1^{x_1^m},\dots, z_n^{x_n^m})\in S,\]
and here $z_i^{x_i^m}=x$ and $z_l^{x_l^m}=1$. In the same way, for each $f\in \mathbb{Z}$ we have
\[\mathbf y= (y_1^{x_1^f},\dots, y_n^{x_n^f})\in S,\]
with $y_i^{x_i^f}=x$ and $y_k^{x_k^f}=1$. Then, since the tuples $\mathbf z$ and $\mathbf y$ have the same $i$-coordinate, we have that for all $m,l\in \mathbb{Z}$,
\[\mathbf y \mathbf z^{-1}=(y_1^{x_1^f}z_1^{x_1^{-m}},\dots, y_n^{x_n^f}z_n^{x_n^{-m}})\in K_i.\]
But the subspace generated by
\[ \Bigl\{(y_1^{x_1^f}z_1^{x_1^{-m}},\dots, y_n^{x_n^f}z_n^{x_n^{-m}}) \mid f,m\in \mathbb{Z} \Bigr\} \]
is infinite dimensional in $H_1(K_i; \mathbb{Q})$ and this is a contradiction because $H_0(\langle (x_1,\dots, x_{i-1}, s_i, x_{i+1},\dots, x_n) \rangle; H_1(K_i; \mathbb{Q}))$ is finite dimensional (see Remark \ref{remark:SES}). 

As a summary, $G_i \slash U_{\{l,k\}}(1)^{(i)}$ cannot be virtually $\mathbb{Z}^2$ and this proves the first statement of the Lemma.

We next address the second statement. Suppose that $G_i \slash U_J(1)^{(i)}$ is virtually $\mathbb{Z}$ and that $ U_J(1)^{(i)}$ is not finitely generated. 

From Lemma \ref{lemma:edgegroups} it follows that there is an edge group $\langle e_i \rangle$ in $G_i$ such that $\langle e_i \rangle \cap U_J(1)^{(i)} \ne 1$ and so $e_i^m\in  U_J(1)^{(i)}$ for some $m\in \mathbb{N}$. Without loss of generality we may assume that $m$ equals $1$. Since $e_i$ is an element in $U_J(1)^{(i)}$, by definition there are elements in $S$ of the form $(x_1^j,\dots, x_n^j)$ with $x_j^j=1$ and $\prod_{j\in J} x_i^j=e_i$. Moreover, from Lemma \ref{claim:Ki} we have that there is $s_i \in \langle e_i \rangle$ such that $\langle K_i, (x_1,\dots, x_{i-1}, s_i, x_{i+1},\dots, x_n)\rangle$ is finitely generated, where $(x_1,\dots, x_{i-1}, s_i, x_{i+1},\dots, x_n)$ equals $\prod_{j\in J} (x_1^j,\dots, x_n^j)$. Hence, if $k\in J$, then $x_k \in U_{J\setminus \{k\}}(1)^{(k)}$.

By taking the images of $\langle K_i, (x_1,\dots, x_{i-1}, s_i, x_{i+1},\dots, x_n)\rangle$ we get that $\langle N_{\{i\}}^{(k)},x_k \rangle$ is finitely generated, and it is also contained in $U_{J_k}(1)^{(k)}$. We then have the chain
\[ N_{\{i\}}^{(k)} < \langle N_{\{i\}}^{(k)},x_k \rangle < U_{J_k}(1)^{(k)} < G_k, \]
and by Corollary \ref{lem:prop_1_subdirect} $G_k \slash  N_{\{i\}}^{(k)}$ is virtually $\mathbb{Z}^m$ for some $m\in \{0,1,2\}$. Therefore, there are three options:
\begin{itemize}
    \item[(1)] The group $N_{\{i\}}^{(k)}$ has finite index in $\langle N_{\{i\}}^{(k)},x_k \rangle$, so in particular it is finitely generated. Thus, $G_k \slash N_{\{i\}}^{(k)}$ is either finite or virtually $\mathbb{Z}$ (see Propositon \ref{Proposition 0}), and in both cases we get that $U_{J_k}(1)^{(k)}$ is finitely generated.
    \item[(2)] The group $U_{J_k}(1)^{(k)}$ has finite index in $G_k$, so in particular it is finitely generated.
    \item[(3)] The group $\langle N_{\{i\}}^{(k)},x_k \rangle$ has finite index in $U_{J_k}(1)^{(k)}$, so $U_{J_k}(1)^{(k)}$ is finitely generated.
\end{itemize}
In conclusion, in the three possibilities we get that $U_{J_k}(1)^{(k)}$ is finitely generated, as we wanted.
\end{proof}

We note the following observation that will make the proof of the second property from Theorem \ref{thm:fp_subdirect_nilpotent quotient} easier.

\begin{remark}\label{remark:easier}
Let $G\coloneqq G_1 \times \dots \times G_n$ where $G_i\in \mathcal G$ is finitely generated and let $S<G$ be a finitely presented full subdirect product. Then, in order to show condition (2) of Theorem \ref{thm:fp_subdirect_nilpotent quotient}, from Lemma \ref{lemma:UJfg} and Lemma \ref{lem:justfg} we deduce that it is enough to check it when $U_J(1)^{(i)}$ is finitely generated.  
\end{remark}

\begin{lemma}[Complements of normal subgroups of type $FP_n$]\label{lem:edges_finite_index}
Let $G\coloneqq G_1 \times \dots \times G_n$ where $G_i\in \mathcal G$ is finitely generated and let $M$ be a non-trivial normal subgroup of $G$ of type $FP_n$. 
    
Then there is a finite index subgroup $H < G$ that satisfies that for each edge group $\langle e_i \rangle$  of $G_i$ in its standard splitting, there is $r_i\in \mathbb N$ such that $H= M\langle e_1^{r_1}, \dots, e_n^{r_n} \rangle$.
\end{lemma}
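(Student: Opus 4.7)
The plan is to exploit the $FP_n$ hypothesis on $M$ to control the quotient $G/M$ by means of Proposition \ref{Proposition 0} and Lemma \ref{lem:fg_normal}, and then to construct $H$ as the preimage in $G$ of a carefully chosen finite-index abelian subgroup of $G/M$ that is generated by images of edge elements.

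First I would show that $\pi_i(M)\ne 1$ for every $i\in\{1,\dots,n\}$. If some $\pi_i(M)$ were trivial, then $M$ would sit inside the $(n{-}1)$-fold subproduct $\prod_{j\ne i}G_j$ and would centralise $G_i$, so that the factor $G_i$ would survive as a direct summand of $G/M$. Applying the Lyndon--Hochschild--Serre spectral sequence to the short exact sequence $1\to M\to G\to G/M\to 1$, the $FP_n$ condition on $M$ combined with the $F_\infty$ nature of $G$ propagates a strong finiteness condition onto $G/M$; adapting the Bridson--Howie--Miller--Short ``$n$--$(n{-}1)$'' philosophy to the class $\mathcal G$ and using that the factors in $\mathcal{G}$ do not admit direct-factor retracts to non-elementary members of the class without violating that finiteness, one rules out the splitting. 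In particular one also obtains $L_i=M\cap G_i\ne 1$, so $M$ is full in the sense of Definition~2.1.

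Having established fullness, Proposition \ref{Proposition 0} gives that $G_i/\pi_i(M)$ is virtually cyclic for each $i$, and Lemma \ref{lem:fg_normal} gives that $G/M$ is finite-by-abelian. I would fix a finite-index abelian subgroup $\bar H\leq G/M$ and define $H$ to be its preimage in $G$; this $H$ is of finite index and contains $M$. For any choice of edge group $\langle e_i\rangle$ of $G_i$, the image of $e_i$ in the virtually cyclic group $G_i/\pi_i(M)$ is either torsion, in which case some power $e_i^{r_i}$ already lies in $\pi_i(M)\subseteq \bar H$; or of infinite order, in which case commensurability of cyclic subgroups inside a virtually $\mathbb Z$ group allows one to choose $r_i$ so that the image of $e_i^{r_i}$ in $G/M$ coincides with a fixed generator of the $i$-th coordinate of $\bar H$. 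Since $e_i^{r_i}$ and $e_j^{r_j}$ commute for $i\ne j$, the abelian subgroup $\langle e_1^{r_1},\dots,e_n^{r_n}\rangle M/M$ exhausts $\bar H$, yielding $H = M\langle e_1^{r_1},\dots,e_n^{r_n}\rangle$.

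The hard part will be the first step: deducing rigorously from the $FP_n$ hypothesis that $M$ has non-trivial projection onto every factor. This is where the full strength of the hypothesis is used, and it is the point at which the graph-of-groups structure of the class $\mathcal G$ must interact with the spectral-sequence machinery. Once fullness is in place, the construction of $H$ and the choice of the powers $r_i$ are essentially formal, reducing to standard manipulations inside virtually cyclic and virtually abelian quotients.
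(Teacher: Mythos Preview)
Your proposal has the right overall shape---pass to a virtually abelian quotient $G/M$ and then argue that edge elements generate a finite-index subgroup---but the step you label ``essentially formal'' is precisely where the real content lies, and your argument there does not go through.

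The gap is in your second paragraph. You fix a finite-index abelian $\bar H\le G/M$ and assert that one can choose $r_i$ so that the image of $e_i^{r_i}$ ``coincides with a fixed generator of the $i$-th coordinate of $\bar H$''. But $\bar H$ has no $i$-th coordinate: $G/M$ is \emph{not} the product $\prod_i G_i/\pi_i(M)$, and knowing that each $e_i$ has infinite-order image in the virtually cyclic group $G_i/\pi_i(M)$ says nothing about linear independence of the images $\bar e_1,\dots,\bar e_n$ inside the free abelian group $\bar H$. Nothing you wrote rules out that all the $\bar e_i$ lie in a proper direct summand of $\bar H$, in which case $M\langle e_1^{r_1},\dots,e_n^{r_n}\rangle$ has infinite index in $G$ for every choice of powers. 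Your torsion case is also off: $e_i^{r_i}\in\pi_i(M)$ only means some element of $M$ has $i$-th coordinate $e_i^{r_i}$; it does not force the class of $e_i^{r_i}$ in $G/M$ into $\bar H$, let alone make it trivial.

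The paper closes exactly this gap, and by a mechanism you have not invoked. After reducing to an epimorphism $\chi\colon G\to\mathbb Z^k$ with $\ker\chi=M$, one supposes $\chi(\langle e_1,\dots,e_n\rangle)$ has infinite index, chooses a nonzero $\phi\colon\mathbb Z^k\to\mathbb Z$ killing every $\chi(e_i)$, observes that $\ker(\phi\circ\chi)$ is still of type $FP_n$ so that $[\phi\circ\chi]\in\Sigma^n(G;\mathbb Z)$, and then uses the product behaviour of $\Sigma$-invariants to get $[(\phi\circ\chi)|_{G_i}]\in\Sigma^1(G_i)$ for some $i$; this contradicts \cite[Proposition~2.5]{CashenLevitt}, which says no character in $\Sigma^1(G_i)$ can vanish on an edge group. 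This is where the full $FP_n$ hypothesis is spent---not merely finite generation---and it cannot be replaced by the coordinate-by-coordinate virtually-cyclic reasoning you sketch. Your concern about fullness (so that Lemma~\ref{lem:fg_normal} applies) is legitimate, but it is a side issue; the $\Sigma$-invariant argument is the heart of the proof and is missing from your proposal.
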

   
\begin{proof}
Since $M$ is of type $FP_n$, it is in particular finitely generated, so by Lemma \ref{lem:fg_normal} we have that $G\slash M$ is virtually abelian. Hence, there is $\tilde{G}$ a finite index subgroup in $G$ where $\tilde{G} \slash M$ is abelian. Let us define $H$ to be $(\tilde{G} \cap G_1) \times \dots \times (\tilde{G} \cap G_n)$. Then $H$ is a finite index subgroup of $G$ and $MH \slash M$ is abelian, so $H \slash (H\cap M)$ is also abelian. Since $H \cap M$ is of finite index in $M$, it is also of type $FP_n$. In conclusion, by renaming the groups we can assume that $M$ is normal in $G$ and $G\slash M$ is abelian.

Consider now $\chi \colon G \to \mathbb{Z}^k$ be the epimorphism such that $\ker \chi=M$, and for each edge group $\langle e_i \rangle <G_i$, define $E\coloneqq E(e_1, \dots, e_n)$ to be the free abelian group $\langle e_1, \dots, e_n \rangle < G$.

Let us firstly show that $\chi(E)$ has finite index in $\mathbb{Z}^k$.
Indeed, suppose towards contradiction that $\chi(E)$ does not have finite index in $\mathbb Z^k$, so that $\chi(E)\cong \mathbb Z^l$ for some $l<k$. Then there is a non-zero homomorphism $\phi \colon \mathbb{Z}^k \to \mathbb{Z}$ such that $(\phi \circ \chi)(E)=0$ and so $(\phi \circ \chi)(e_i)=0$  for all $i\in \{1,\dots,n\}$. In addition, since $\chi$ is an epimorphism, we can ensure that $\phi \circ \chi \neq 0$. By assumption, $ker \chi$ is of type $FP_n$, so $\ker (\phi \circ \chi)$ is also of type $FP_n$, and so $[\phi \circ \chi]\in \Sigma^n(G; \mathbb{Z})$. Thus, $[(\phi \circ \chi)_{i}]\in \Sigma^1(G_i)$ for some $i\in \{1,\dots,n\}$, where $(\phi \circ \chi)_{i}$ denotes the restriction of $\phi \circ \chi$ to $G_i$. However, we have that $(\phi \circ \chi)_{i}(e_i)=0$, that is, the character $(\phi \circ \chi)_{i}$ is trivial in an edge group of $G_i$, and this is a contradiction by \cite[Proposition 2.5]{CashenLevitt}. Hence, we have proved that $\chi(E)$ has finite index in $G\slash M$ and so the preimage $ME$ has finite index in $G$.

Given $e_1, \dots, e_{i-1}, e_{i+1}, \dots, e_n$, let us define $\tilde{H}_i$ to be
\[\bigcap_{e_{ij}\in E(G_i)} M E(e_1, \dots,e_{ij}, \dots, e_n),\]

where $E(G_i)$ denotes the finite set of edge groups in the decomposition of $G_i$ and $e_{ij} \in E(G_i)$ is the generator of the corresponding edge group.

Note that $\tilde{H}_i$ has finite index in $G$ for being a finite intersection of finite index subgroups and so it also has finite index in $M E(e_1, \dots, e_n)$ for all $e_i\in E(G_i)$. Since $M \langle e_1, \dots, e_{i-1}, e_{i+1}, \dots, e_n\rangle < \tilde H_i$, we have that for each $e_{ij}\in E(G_i)$ there exists $r_{ij}\in \mathbb N$ such that $\tilde H_i = M \langle e_1, \dots, e_{ij}^{r_{ij}}, \dots, e_n\rangle$.

Finally, if we define $\tilde H$ to be $\bigcap_{1\le i\le n} \tilde{H}_i$, then for each $e_i \in E(G_i)$ and $i\in \{1, \dots, n\}$, there exists $r_i\in \mathbb N$ such that $\tilde H = M\langle e_1^{r_1}, \dots, e_n^{r_n}\rangle$.
\end{proof}

\begin{lemma}[Commutators of elliptic elements]\label{lem:elliptic_commute}
Let $G\coloneqq G_1 \times \dots \times G_n$ where $G_i\in \mathcal G$ is finitely generated and let $M$ be a non-trivial normal subgroup of $G$ of type $FP_n$. Let $H$ be the finite index subgroup given in Lemma \ref{lem:edges_finite_index}.

Then the group $H_1 \times \dots \times H_n$, where $H_i = H \cap G_i$, is a finite index subgroup that satisfies that for each pair of elliptic elements $x,x'\in H_i$ we have that $[x,x'] \in [M,M]$.
\end{lemma}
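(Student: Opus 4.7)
The first assertion, that $H_1 \times \cdots \times H_n$ is a finite index subgroup of $G$, is routine: since $H$ has finite index in $G = G_1 \times \cdots \times G_n$, each intersection $H_i = H \cap G_i$ has finite index in $G_i$ (the natural map $G_i \to G/H$ has kernel $H_i$), and so the product $H_1 \times \cdots \times H_n \subseteq H$ has finite index in $G$ as well.

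For the commutator statement, I would work in the metabelian quotient $\bar G := G/[M,M]$. After the reduction of Lemma \ref{lem:edges_finite_index}, we may assume that $G/M$ is abelian, so $[G,G] \subseteq M$ and $\bar G$ is an extension of the abelian group $G/M$ by the abelian normal subgroup $\bar M := M/[M,M]$, on which $G/M$ acts by conjugation. The desired statement $[x,x'] \in [M,M]$ is then equivalent to the assertion that $\bar x$ and $\bar{x'}$ commute in $\bar G$.

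The plan is to exploit the decomposition $H = M \cdot E$ provided by Lemma \ref{lem:edges_finite_index}, where $E = \langle e_{ij}^{r_{ij}}\rangle$ is generated by powers of the edge generators of all the factors $G_k$. Writing $x = m_x e_x$ and $x' = m_{x'} e_{x'}$ with $m_x, m_{x'} \in M$ and $e_x, e_{x'} \in E$, a direct computation in the metabelian group $\bar G$ yields that $\overline{[x,x']}$ is, in additive notation on $\bar M$, a sum of three contributions: the commutator term $\overline{[e_x, e_{x'}]} \in \bar M$ coming purely from the edge part, and two ``shift'' terms $(\bar{e_{x'}} - 1)\cdot \bar m_x$ and $(1 - \bar{e_x})\cdot \bar m_{x'}$ arising from the conjugation action of $\bar{e_x}$, $\bar{e_{x'}} \in G/M$ on $\bar M$. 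I would then argue that each of these three contributions vanishes, using the ellipticity of $x$ and $x'$.

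The essential point is that because $x$ is elliptic, $x$ lies in some conjugate $gA_vg^{-1}$ of a free abelian vertex group of $G_i$, and hence commutes in $G_i$ with every (conjugated) edge generator of an edge of $G_i$ adjacent to $v$; the same applies to $x'$ at a vertex $v'$. When $v=v'$, the two elements lie in a common abelian subgroup and $[x,x']=1$ directly. In general, I would proceed by induction on the length of the Bass-Serre path from $v$ to $v'$ in $G_i$, using the cyclic edge group of each successive edge (which is central in both adjacent abelian vertex stabilizers) as an intermediary: at each step one replaces $x$ by an element with the same image in $\bar G$ that lies in the next vertex group, absorbing the discrepancy into $M$ in such a way that it represents a commutator of elements of $M$, hence is trivial in $\bar M$. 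The main obstacle will be organizing this induction so that the decompositions $x = m_x e_x$, $x' = m_{x'} e_{x'}$ can be chosen compatibly with the path, ensuring that all the ``error terms'' picked up at each step genuinely lie in $[M,M]$ rather than merely in $M$; this is where the concrete structure of the class $\mathcal G$ (cyclic edge groups sitting inside abelian vertex groups on both sides) is crucial.
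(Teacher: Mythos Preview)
Your proposal is correct and follows essentially the same strategy as the paper: pass to the metabelian quotient $G/[M,M]$, use the decomposition $H=M\cdot E$ from Lemma~\ref{lem:edges_finite_index} to split $[x,x']$ into cross terms, and control those terms by an induction along an edge path in the underlying graph of $G_i$.

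Two small points of alignment are worth recording. First, the relevant induction parameter is not the distance from $v$ to $v'$: once you fix a single decomposition $H=M\langle e_1^{r_1},\dots,e_n^{r_n}\rangle$ (so $E$ is abelian and your term $[e_x,e_{x'}]$ vanishes outright), the cross term $[m_x,e_{x'}]$ reduces to $[x,e_i^{r_i l}]$ for the \emph{fixed} edge $e_i$ of $G_i$, and the induction is on the distance from the vertex of $x$ to an endpoint of $e_i$ (and symmetrically for $x'$). Second, the paper resolves the organizational obstacle you flag by moving the edge rather than the vertex: since Lemma~\ref{lem:edges_finite_index} gives a decomposition $H=M\langle e_1^{r_1'},\dots,e_{ij}^{r_{ij}},\dots,e_n^{r_n'}\rangle$ for \emph{every} edge $e_{ij}$ of $G_i$, one can rewrite a power of $e_{i1}$ as an element of $M$ times a power of the adjacent $e_{i2}$ times edges from other factors, and commutator identities then reduce $[m,e_{i1}^{r_{i1}}]$ to $[m,e_{i2}^{r_{i2}}]$ modulo $[M,M]$. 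The paper packages this as two short claims (one for $[m,e_{i1}^{r_{i1}}]$ with $m\in M$ of the form $xy$, one for $[e_{i1}^{r_{i1}},e_{is}^{r_{is}}]$), after which the general case follows from one further expansion; this is exactly the bookkeeping your last paragraph anticipates.
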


\begin{proof}
We split this proof in several claims to make it more readable. Let $e_{i1}, \dots, e_{it}$ be generators of the edge groups in $G_i$ such that $[e_{ij}, e_{i,j+1}]=1$ for every $j\in \{1, \dots, t-1\}$. From Lemma \ref{lem:edges_finite_index}, for each generator $e_k$ of an edge group of $G_k$, where $1\le k\ne i \le n$, and each $j\in \{2, \dots, t\}$, there exist $r_{k}, r_{i1}, r_k', r_{i_2} \in \mathbb N \cup \{0\}$ such that 

\begin{equation}\label{eq:dec_H}
H=M\langle e_1^{r_1}, \dots, e_{i1}^{r_{i1}}, \dots, e_n^{r_n}\rangle = M \langle e_1^{r_1'}, \dots, e_{ij}^{r_{i2}}, \dots, e_n^{r_n'}\rangle    
\end{equation} 

has finite index in $G$.

Let $x$ be an elliptic element in $H_i$, namely $x$ belongs to a vertex group $G_v$ in the decomposition of $H_i$ and let $ y \in \langle e_k \mid 1\le k\le n, i\ne k \rangle$. By definition, $[H_i, y
]=1$. Suppose that $e_{is} \in G_v$ for $1\le s \le t$.

We first prove this particular instance of the statement: 

\begin{claim}\label{claim1inproof}
If $xy\in M$, then $[xy, e_{i1}^{r_{i1}}]\in [M,M]$.    
\end{claim}

\begin{proof}[Proof of the claim]

Let us show it by induction on $s$.

If $s=1$, then $e_i=e_{i1}\in G_v$, and since the vertex groups are abelian we have that $[x,e_i]=1$. Furthermore, the element $y$ lies in $\langle e_k \mid 1\le k \le n, i\ne k 
\rangle$, so we have that $[y,e_i]=1$. From the commutator identities, we then obtain that $[xy,e_i]=1$ and so $[xy, e_i^{r_i}]=1$.

Let us now show the induction step. Assume that $[xy, e_{ij}^{r_{ij}}]\in [M,M]$ for $j\in \{2, \dots, s\}$ and we need to show that $[xy, e_{i1}^{r_{i1}}]\in [M,M]$. From the two decompositions of $H$ given in \eqref{eq:dec_H}, we have that $e_{i_1}^{r_{i1}} \in M \langle e_1^{r_1'}, \dots, e_{i2}^{r_{i2}}, \dots, e_n^{r_n'}\rangle$ and so there exist $l_{2}\in \mathbb N\cup \{0\}$ and $w\in \langle e_1^{r_1}, \dots, e_{i-1}^{r_{i-1}}, e_{i+1}^{r_{i+1}}, \dots, e_n^{r_n} \rangle$ such that $e_{i1}^{r_{i1}} e_{i2}^{-l_2r_{i2}} w^{-1} \in M$.

Notice that
$$[xy, e_{i1}^{r_{i1}}] = [xy,  (e_{i1}^{r_{i1}} e_{i2}^{-l_2r_{i2}} w^{-1}) (w e_{i2}^{l_2r_{i2}})]=[xy, (w e_{i2}^{l_2r_{i2}})] [xy,  (e_{i1}^{r_{i1}} e_{i2}^{-l_2r_{i2}} w^{-1})]^g,
$$
where the second equality is derived from commutator identities and $g$ equals $w e_{i2}^{l_2r_{i2}}$.

The elements $y$ and  $w$ belong to the abelian subgroup $\langle e_1, \dots, e_{i-1}, e_{i+1}, \dots, e_n \rangle$, so $[y,w]=1$. Furthermore, since $x\in G_i$, we have that $[x,w]=1$. By induction, $[xy,e_{i2}^{r_{i2}}]$ lies in $[M,M] $ and by using commutator identities we get that $[xy, (w e_{i2}^{l_2r_{i2}})]\in [M,M]$. Moreover, $(e_{i1}^{r_{i1}} e_{i2}^{-l_2r_{i2}} w^{-1}) \in M$, so $[xy, (e_{i1}^{r_{i1}} e_{i2}^{-l_2r_{i2}} w^{-1})]\in [M,M]$. Combining the above, we conclude that $[xy, e_{i1}^{r_{i1}}]\in [M,M]$.
\end{proof}

\begin{claim}\label{claim2inproof}
For each pair of generators $e_{i1}$ and $e_{is}$ of edge groups of $G_i$, we have that $[e_{i1}^{r_{i1}}, e_{is}^{r_{is}}]\in [M,M]$.     
\end{claim}

\begin{proof}
The proof is again by induction on $s$.

If $s=1$, then it is obvious. Suppose by induction that $[e_{ij}^{r_{ij}}, e_{is}^{r_{is}}]\in [M,M]$ for $j\in \{2, \dots, s\}$ and let us show that $[e_{i1}^{r_{i1}}, e_{is}^{r_{is}}]\in [M,M]$.

As argued above, from the decompositions of $H$ given in \eqref{eq:dec_H}, there are $l_2 \in \mathbb N\cup \{0\}$ and an element $w\in \langle e_1^{r_1}, \dots, e_{i-1}^{r_{i-1}}, e_{i+1}^{r_{i+1}}, \dots, e_n^{r_n} \rangle$ such that $e_{i1}^{r_{i1}}e_{i2}^{-l_{2}r_{i2}} w^{-1} \in M$ and by inductive hypothesis, $[e_{i2}^{r_{i2}}, e_{is}^{r_{is}}]$ lies in $[M,M]$. We want to show that $[e_{i1}^{r_{i1}},e_{is}^{r_{is}}] \in [M,M]$. We have that
$$
[e_{i1}^{r_{i1}},e_{is}^{r_{is}}]=[(e_{i1}^{r_{i1}}e_{i2}^{-l_{2}r_{i2}} w^{-1}) (w e_{i2}^{l_2r_{i2}}), e_{is}^{r_{is}}]=[(e_{i1}^{r_{i1}}e_{i2}^{-l_{2}r_{i2}} w^{-1}), e_{is}^{r_{is}}]^g [w e_{i2}^{l_2r_{i2}}, e_{is}^{r_{is}}]
$$
where the second equality is obtained using commutator identities. 

The elements $e_{i1}, e_{12}$ belong to a vertex group, say $G_v$, and so $x\coloneqq e_{i1}^{r_{i1}}e_{i2}^{-l_{2}r_{i2}} \in G_v$. Furthermore, $y\coloneqq w^{-1} \in \langle e_1, \dots, e_{i-1}, e_{i+1}, \dots, e_n \rangle$ and $xy \in M$. Then, from the above claim, we get that \[[(e_{i1}^{r_{i1}}e_{i2}^{-l_{2}r_{i2}} w^{-1}), e_{is}^{r_{is}}]=[xy, e_{is}^{r_{is}}]\in [M,M].\] Moreover, $w$ and $e_{is}$ commute, so $[(w e_{i2}^{l_2r_{i2}}), e_{is}^{r_{is}}] = [e_{i2}^{l_{2}r_{i2}},e_{is}^{r_{is}}]\in [M,M]$. Hence, we have shown that $[e_{i1}^{r_{i1}},e_{is}^{r_{is}}] \in [M,M]$.
\end{proof}

Finally, let $x, x' \in H \cap G_i$ be two elliptic elements. We claim that $[x,x']\in [M,M]$.

Suppose that $x\in G_v$ and $x'\in G_w$, where $G_v$ and $G_w$ are vertex groups in $G_i$. Let $e_{i1}\in G_v$ and $e_{is}\in G_w$ be two elements in the edge groups. From the decompositions of $H$ given in \eqref{eq:dec_H}, we have that $x e_{i1}^{r_{i1}l_{x}} w_x \in M$ and $x' e_{is}^{r_{is}l_{x'}} w_{x'}\in M$ for some $l_x,l_{x'}\in \mathbb Z$, $w_x \in \langle e_1^{r_1}, \dots, e_{i-1}^{r_{i-1}}, e_{i+1}^{r_{i+1}}, \dots, e_n^{r_n}\rangle$ and $w_{x'} \in \langle e_1^{r_1'}, \dots, e_{i-1}^{r_{i-1}'}, e_{i+1}^{r_{i+1}'}, \dots, e_n^{r_n'}\rangle$.

Using commutator identities we have
$$
[x,x']=[(x e_{i1}^{r_{i1}l_{x}} w_x) (w_x^{-1}e_{i1}^{r_{i1} l_x}), (x' e_{is}^{r_{is}l_{x'}} w_{x'}) (w_{x'}^{-1}e_{is}^{r_{is} l_{x'}})]=
$$

$$[(x e_{i1}^{r_{i1}l_{x}} w_x), (w_{x'}^{-1}e_{is}^{r_{is}l_{x'}})]^{g_1}[(x e_{i1}^{r_{i1}l_{x}} w_x),({x'} e_{is}^{r_{is}l_{{x'}}} w_{x'})]^{g_2}[(w_x^{-1}e_{i1}^{r_{i1}l_x}), (w_{x'}^{-1}e_{is}^{r_{is}l_{x'}})]^{g_3}[(w_x^{-1}e_{i1}^{r_{i1}l_x}), ({x'} e_{is}^{r_{is}l_{{x'}}} w_{x'})]^{g_4}.
$$

Using the fact that $w_x, w_{x'}$ belong to the abelian group $\langle e_1,\dots, e_n\rangle$, they commute and they also commute with $x,{x'}, e_{i1}$ and $e_{i2}$. Then we deduce that \[[(x e_{i1}^{r_{i1}l_{x}} w_x), (w_{x'}^{-1}e_{is}^{r_{is}l_{x'}})]=[(x e_{i1}^{r_{i1}l_{x}} w_x), (e_{is}^{r_{is}})] \in [M,M]\] by Claim \ref{claim1inproof}. Similarly, $[(w_x^{-1}e_{i1}^{r_{i1}l_x}), ({x'} e_{is}^{r_{is}l_{{x'}}} w_{x'})]\in [M,M]$. Furthermore, $[(w_x^{-1}e_{i1}^{r_{i1}}), (w_{x'}^{-1}e_{is}^{r_{is}l_{x'}})]=[e_{i1}^{r_{i1}}, e_{is}^{r_{is}l_{x'}}]$ lies in $[M,M]$ by Claim \ref{claim2inproof}. Finally, $[(x e_{i1}^{r_{i1}l_{x}} w_x),({x'} e_{is}^{r_{is}l_{{x'}}} w_{x'})]\in [M,M]$ since the elements $(x e_{i1}^{r_{i1}l_{x}} w_x), ({x'} e_{is}^{r_{is}l_{{x'}}} w_{x'})$ belong to $M$. Thus, we conclude that $[x,{x'}]\in [M,M]$.
\end{proof}

\begin{lemma}[Quotients by elliptic commutators]\label{lem:HNN_structure_quotient}
Let $G\coloneqq G_1 \times \dots \times G_n$ where $G_i\in \mathcal G$ is finitely generated and let $M$ be a non-trivial normal subgroup of $G$ of type $FP_n$.

For $i\in \{1,\dots,n\}$ let $H_i< G_i$ be the group given in Lemma \ref{lem:elliptic_commute} and let $H\coloneqq H_1\times \cdots \times H_n$. In particular, $H$ satisfies that for each pair of elliptic elements $x,y\in H_i$ we have that $[x,y] \in [M,M]$.

The subgroup $H_i \in \mathcal G$ admits a graph of groups decomposition $\langle T_i, t_1, \dots, t_m\rangle$ where $T_i$ is the normal subgroup generated by the elliptic elements of $H_i$ and $t_1,\dots,t_m$ are stable letters.

Then $H_i\slash (T_i \cap [M,M])$ is isomorphic to a graph of groups with a unique abelian vertex group.
\end{lemma}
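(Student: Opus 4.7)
The plan is to proceed in three steps: show that $A \coloneqq T_i\slash (T_i\cap[M,M])$ is abelian, use the graph of groups structure to write $H_i\slash (T_i\cap[M,M])$ as a semidirect product $A\rtimes F_m$, and finally realize this semidirect product as a graph of groups with a single vertex.

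For the first step, I would begin by noting that $T_i\cap[M,M]$ is normal in $H_i$: the subgroup $T_i$ is normal in $H_i$ by definition, and $[M,M]$ is normal in $G$ (since $M\triangleleft G$), hence normal in $H_i$. Now $T_i$ is generated as a subgroup (not merely normally) by elliptic elements, because the conjugate of an elliptic element is elliptic. Consequently $[T_i,T_i]$ is generated by commutators $[x,y]$ with $x,y$ elliptic in $H_i$. By Lemma \ref{lem:elliptic_commute}, each such commutator lies in $[M,M]$, and it trivially lies in $T_i$. Therefore $[T_i,T_i]\le T_i\cap[M,M]$, so $A$ is abelian. This is the crucial step, and the subtle point is the closure of elliptic elements under conjugation, so that a commutator of elliptic elements is really a commutator of two generators of $T_i$, rather than merely a commutator of two conjugates of generators; without this, one could only conclude that the commutators lie in the normal closure of $T_i\cap[M,M]$, which would be less informative.

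For the second step, the graph of groups decomposition $H_i=\langle T_i, t_1,\dots,t_m\rangle$ gives a split short exact sequence
\[1\longrightarrow T_i\longrightarrow H_i\longrightarrow F_m\longrightarrow 1,\]
where $F_m$ is the free group on $t_1,\dots,t_m$ and the splitting sends each free generator to the corresponding stable letter; this is standard in Bass-Serre theory, as the natural map $H_i \to \pi_1(\Gamma_i)$ that kills all vertex groups admits as section the obvious map $F_m \to H_i$. Since $T_i\cap[M,M]$ is normal in $H_i$ and contained in $T_i$, passing to the quotient preserves the semidirect product structure, yielding $H_i\slash (T_i\cap[M,M])\iso A\rtimes F_m$ with $F_m$ acting on $A$ by conjugation.

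For the third step, I realize $A\rtimes F_m$ as the fundamental group of a graph of groups. Let $\Gamma$ be the rose with one vertex and $m$ loops. Assign the vertex group to be $A$, and for each loop $j$ assign the edge group to be $A$ with one edge-to-vertex map equal to $\mathrm{id}_A$ and the other equal to the automorphism $\phi_j\in\mathrm{Aut}(A)$ induced by conjugation by $t_j$ in $A\rtimes F_m$. The resulting graph of groups has presentation
\[\langle A, t_1,\dots,t_m \mid t_j a t_j^{-1}=\phi_j(a),\ a\in A,\ 1\le j\le m\rangle \iso A\rtimes F_m,\]
which exhibits $H_i\slash (T_i\cap[M,M])$ as the fundamental group of a graph of groups with a unique abelian vertex group $A$, as required.
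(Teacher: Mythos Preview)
Your proof is correct and follows essentially the same approach as the paper: both arguments use the semidirect product decomposition $H_i = T_i \rtimes F_m$, quotient out the normal subgroup $T_i\cap[M,M]$, and invoke Lemma~\ref{lem:elliptic_commute} (together with the fact that conjugates of elliptic elements are elliptic) to conclude that the vertex group $T_i/(T_i\cap[M,M])$ is abelian. The only cosmetic difference is that the paper keeps the images $A_j/(A_j\cap[M,M])$ of the original cyclic edge groups in its graph-of-groups description of $\overline{H}_i$, whereas you take the whole vertex group $A$ as each edge group; the paper's choice is the one actually used downstream in Lemma~\ref{lem:normal_subnormal}.
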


\begin{proof}
As a graph of groups, $H_i$ has a presentation of the form
\[ \langle S_i, t_1,\dots,t_m \mid t_i^{-1}A_i t_i= B_i, i\in \{1,\dots,m\} \rangle, \]
where $S_i$ is an amalgamated free product, $t_1,\dots,t_m$ are the stable letters and $A_i, B_i$ are infinite cyclic edge groups. We may also consider the presentation of $H_i$ of the form 
\[ \Big \langle  T_i , t_1,\dots,t_m \mid t_i^{-1}A_i t_i= B_i, i\in \{1,\dots,m\} \Big \rangle, \]
where $T_i =\langle \langle S_i \rangle \rangle$ is the normal closure of $S_i$ in $H_i$ and so it is generated by elliptic elements. In particular, $H_i= T_i \rtimes F_m$ where $F_m$ is the free group of rank $m$ generated by the $t_j$. Note that $T_i \cap [M,M]$ is normal in $H_i$ and $T_i \cap [M,M]< T_i$. We also have that $H_i \slash (T_i\cap [M,M])$ is isomorphic to
\[ \overline{H}_{i}= \langle  T_i \slash ( T_i \cap [M,M] ), \tau_1, \dots, \tau_m \mid \tau_i^{-1} A_i \slash (A_i \cap [M,M]) \tau_i= B_i\slash (B_i \cap [M,M]), i\in \{1, \dots, m\} \rangle.\]
Indeed, there is an epimorphism $H_i=T \rtimes F_m \to \overline{H}_i = (T_i\slash (T_i \cap [M,M]))\rtimes F_m$ which induces an isomorphism on the free group $F_m$. It follows that the kernel is precisely $T_i\cap [M,M]$ and so by the First Isomorphism Theorem we obtain what we want. Moreover, from Lemma \ref{lem:elliptic_commute} we have that $T_i  \slash \big ( T_i \cap [M,M]\big )$ is abelian and so the statement follows.
\end{proof}

\begin{lemma}[Quotient by the commutator subgroup of a normal subgroup of type $FP_n$]\label{lem:nilpotent}
Let $G\coloneqq G_1 \times \dots \times G_n$ where $G_i\in \mathcal G$ is finitely generated and let $M$ be a non-trivial normal subgroup of $G$ of type $FP_n$. Then $G\slash [M,M]$ is virtually nilpotent of class at most 2.
\end{lemma}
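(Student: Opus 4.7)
The plan is to find a finite index subgroup $H$ of $G$ such that $H\slash [M,M]$ is nilpotent of class at most $2$; this suffices for the virtually nilpotent statement. Since $M$ is of type $FP_n$ and hence finitely generated, Lemma \ref{lem:fg_normal} (applied to the factors on which $M$ projects nontrivially, which is where the claim has content) shows that $G\slash M$ is virtually abelian. Combining this with Lemma \ref{lem:elliptic_commute} and Lemma \ref{lem:edges_finite_index}, we pass to a finite index subgroup $H = H_1 \times \dots \times H_n$, with $H_i = H \cap G_i$, on which $H\slash M$ is abelian, $H = M \cdot E'$ with $E' = \langle e_1^{r_1}, \dots, e_n^{r_n} \rangle$ the abelian subgroup generated by suitable powers of (elliptic) edge generators $e_j$ of $H_j$, and for every pair of elliptic elements $x, x' \in H_i$ we have $[x, x'] \in [M,M]$.

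We shall show that $M\slash [M,M]$ lies in the centre of $H\slash [M,M]$. Together with $H\slash M$ being abelian, this presents $H\slash [M,M]$ as a central extension of an abelian group by an abelian group, hence nilpotent of class at most $2$. Since $M$ centralises its own abelianisation and $H = M \cdot E'$ with $E'$ abelian, centrality reduces to proving $[e_j^{r_j}, m] \in [M,M]$ for each $j$ and each $m \in M$. Writing $m = (m_1, \dots, m_n)$ and using that $e_j^{r_j} \in H_j$ commutes with $H_i$ for $i \ne j$, we find
\[
[e_j^{r_j}, m] = (1, \dots, 1, [e_j^{r_j}, m_j], 1, \dots, 1),
\]
an element concentrated in the $j$-th coordinate. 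In the quotient $\bar H_j = H_j\slash (T_j \cap [M,M]) \cong A_j \rtimes F_{m_j}$ supplied by Lemma \ref{lem:HNN_structure_quotient}, the element $e_j^{r_j}$ is elliptic, so its image lies in $A_j$; a direct calculation in the semidirect product gives $[\bar e_j^{r_j}, \bar m_j] = ((1 - \sigma_f)(\bar e_j^{r_j}), 1) \in A_j$, where $f \in F_{m_j}$ is the image of $m_j$ and $\sigma_f$ is the associated action on $A_j$.

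The main obstacle is to upgrade this computation in $\bar H_j$ to a statement about $H$: the single-coordinate element $(1, \dots, [e_j^{r_j}, m_j], \dots, 1)$ must be shown to lie in $[M,M] \subseteq H$, not merely that its image in $\bar H_j$ lies in the image of $[M,M]$. For this, one must produce finitely many elements $\mu^{(k)}, \nu^{(k)} \in M$ such that the product $\prod_k [\mu^{(k)}, \nu^{(k)}]$ realises $[e_j^{r_j}, m_j]$ in the $j$-th coordinate while the contributions in the other coordinates cancel. This is achieved by decomposing $m_j$ in the graph-of-groups normal form of $H_j$ into alternating elliptic-vertex and stable-letter factors, using Lemma \ref{lem:elliptic_commute} to dispose of the elliptic-elliptic commutators, and exploiting the subdirect structure of $M$ inside $M_1 \times \dots \times M_n$ (where $M_i = p_i(M) \in \mathcal{G}$ by Proposition \ref{Proposition 0}) to choose the $i$-th coordinates of the lifts ($i \ne j$) from $[M_i, M_i]$ so that the required cancellation holds.
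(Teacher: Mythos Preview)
Your reduction to showing that $M/[M,M]$ is \emph{central} in $H/[M,M]$ is cleaner in spirit, but it is strictly stronger than the statement you are asked to prove, and your argument for it does not close. Nilpotency of class $\le 2$ for $H/[M,M]$ only requires $[[H,H],H]\subseteq [M,M]$; it does \emph{not} force the abelian normal subgroup $M/[M,M]$ to be central (think of a Heisenberg group with a non-central rank-$2$ abelian normal subgroup and cyclic quotient). So you have set yourself a harder target, and then the crucial step---handling the hyperbolic part of $m_j$---is only sketched. Decomposing $m_j$ into elliptic pieces and stable letters, Lemma~\ref{lem:elliptic_commute} indeed kills the elliptic--elliptic terms, but you are left with commutators $[e_j^{r_j},\tau]$ for stable letters $\tau$ of $H_j$, and there is no reason these lie in $[M,M]$: your own computation in $\bar H_j$ produces the nonzero element $(1-\sigma_f)(\bar e_j^{r_j})\in A_j$, not the identity. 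The appeal to the ``subdirect structure of $M$'' to fix the other coordinates is beside the point; already in the case $n=1$ there are no other coordinates, and the obstruction sits entirely in the $j$-th factor. In short, you have not exhibited the promised $\mu^{(k)},\nu^{(k)}\in M$, and there is no mechanism in your outline that produces them.

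The paper sidesteps this by proving the weaker (and sufficient) inclusion $[H_i,[H_i,H_i]]\subseteq [M,M]$ directly. Writing each $h_k=m_k g_k$ with $m_k\in M$ and $g_k\in\langle e_1^{r_1},\dots,e_n^{r_n}\rangle$, commutator identities reduce the problem to showing $[[t,e_1^{r_1}],e_1^{r_1}]\in [M,M]$ for $t$ a stable letter of $H_1$. The key trick is then to use the decomposition $H=M\langle e^{r},e_2^{r_2'},\dots\rangle$ for the edge $e$ associated to $t$ (so that $tet^{-1}=e'$), replace $e_1^{r_1}$ by a power of $e$ modulo $M$, and observe that $[t,e^{rm_1}]={e'}^{rm_1}e^{-r}$ is \emph{elliptic}; one more application of Lemma~\ref{lem:elliptic_commute} then finishes. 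This double-commutator route is exactly what allows the hyperbolic contribution to be tamed, and it is what is missing from your argument. If you want to salvage your approach, you would need to pass to a further finite index subgroup (as in Lemma~\ref{lem:normal_subnormal}) before centrality of $M/[M,M]$ can hold---but that lemma uses the present one, so this would be circular.
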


\begin{proof}
Let $H= H_1 \times \dots \times H_m < G$ be the finite index subgroup of $G$ given in Lemma \ref{lem:elliptic_commute}.

Our goal is to show that $[H_i,[H_i,H_i]]\subseteq [M,M]$. In particular, we will get that $H \slash [M,M]$ is nilpotent of class at most $2$. Without loss of generality, we prove it for $H_1$.

Let $h_1, h_2, h_3$ be elements in  $H_1$. Since $H=M\langle e_1^{r_1}, \dots, e_n^{r_n}\rangle$ for some edge groups $e_i$ of $G_i$ (see Lemma \ref{lem:edges_finite_index}), we have that $h_i= m_i g_i$ for $m_i \in M$ and $g_i \in \langle e_1^{r_1},\dots, e_n^{r_n} \rangle$. Then, \[[h_2, h_3]= [m_2g_2, m_3g_3]= [m_2,m_3g_3]^{g_2}[g_2,m_3g_3]=[m_2,g_3]^{g_2}[m_2,m_3]^{g_3g_2}[g_2,g_3][g_2,m_3]^{g_3}.\]
Note that the elements $[m_2,g_3], [m_2,m_3], [g_2,m_3]$ lie in $M$, so then the commutator of $m_1$ with those three elements belongs to $[M,M].$ As a consequence, in order to check that $[m_1g_1, [m_2g_2, m_3g_3]]$ is an element of $[M,M]$, it suffices to show that $[H_1, \langle e_1^{r_1} \rangle], \langle e_1^{r_1} \rangle]\subseteq [M,M]$.

From Lemma \ref{lem:elliptic_commute} we deduce that we are left to check that if $t$ is a hyperbolic element of $H_1$, then $[[t, e_1^{r_1}], e_1^{r_1}]$ is in $[M,M]$. Suppose that the hyperbolic element $t$ gives the relation $t e t^{-1}= e'$, where $e$ and $e'$ are in edge groups of $H_1$. Again by Lemma \ref{lem:edges_finite_index} we get that $H=M \langle e^r, e_2^{r_2'}, \dots, e_n^{r_n'}\rangle$. Hence, \[e_1^{r_1} e^{rm_{1}} w\in M \quad \text{and} \quad  t e_1^{r_1s_{1}}\tilde{w}\in M,\] for some integers $m_1,s_1$ and some $w, \tilde{w}$ in the abelian group $\langle e_2, \dots, e_n
\rangle$.

Using commutator identities, we see that
\[[t e_1^{r_1s_{1}}\tilde{w}, e_1^{r_1}e^{rm_{1}} w]=[t,e_1^{r_1}]^{g_1}[ t,e^{rm_{1}}]^{g_2}[e_1^{r_1s_{1}},e^{rm_{1}}]^{g_3},\] for some elements $g_1,g_2,g_3$ in $H_1$. Lemma \ref{lem:elliptic_commute} tells us that $[e_1^{r_1s_{1}},e^{rm_{1}}] \in [M,M]$, so we deduce that $[[t, e_1^{r_1}], e_1^{r_1}]$ lies in $[M,M]$ if and only if $[[t,e^{rm_{1}}], e_1^{r_1}]\in [M,M]$.

Note that $[t, e^{rm_{1}}]$ equals $ t e^{rm_{1}} t^{-1} e^{-r}= {e'}^{rm_{1}} e^{-r},$ so
\[[[t,e^{rm_{1}}], e_1^{r_1}]= [{e'}^{rm_{1}} e^{-r}, e_1^{r_1}],\]
and once more from the commutator identities and Lemma \ref{lem:elliptic_commute} we conclude that \[[{e'}^{rm_{1}} e^r, e_1^{r_1}]\in [M,M].\] 
\end{proof}

\begin{lemma}[Commutator subgroups of a normal subgroup of type $FP_n$]\label{lem:normal_subnormal}
Let $G\coloneqq G_1 \times \dots \times G_n$ where $G_i\in \mathcal G$ is finitely generated and let $M$ be a non-trivial normal subgroup of $G$ of type $FP_n$. Then there is a finite index subgroup $H$ of $G$ such that $[H,H]=[M,M]$.
\end{lemma}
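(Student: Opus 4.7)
Since $[M,M] \subseteq [H,H]$ whenever $M \subseteq H$, it suffices to produce a finite index subgroup $H$ of $G$ containing $M$ with $H\slash [M,M]$ abelian; this forces $[H,H] \subseteq [M,M]$ and hence gives the desired equality. My plan is to start from the $2$-nilpotent quotient produced in the previous lemmas and then pass to a further finite index subgroup to kill the remaining nonabelian contribution.

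By Lemma \ref{lem:nilpotent}, there is a finite index subgroup $H_0$ of $G$ with $H_0\slash [M,M]$ nilpotent of class at most $2$. Using Lemma \ref{lem:fg_normal} (which gives that $G\slash M$ is virtually abelian) together with the explicit description from Lemma \ref{lem:edges_finite_index}, I may replace $H_0$ by a further finite index subgroup so that $H_0 \supseteq M$, $H_0\slash M$ is free abelian, and $H_0 = M \cdot E$ for a free abelian subgroup $E = \langle e_1^{r_1}, \dots, e_n^{r_n}\rangle$ generated by the prescribed powers of edge elements. Since $E$ is abelian and $M$ is normal, a standard commutator expansion yields $[H_0, H_0] = [M,M] \cdot [M, E]$, so that $[H_0, H_0]\slash [M,M] = [M,E]\slash [M,M]$ measures precisely the failure of $E$ to act trivially on $M\slash [M,M]$ by conjugation.

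The central reduction is thus to find a finite index subgroup $E' \subseteq E$ acting trivially on $M\slash [M,M]$, since then $H = M \cdot E'$ has finite index in $G$, contains $M$, and satisfies $[H,H] \subseteq [M,M]$. The $2$-nilpotency of $H_0\slash [M,M]$ translates to the module identity $I_E^2 \cdot (M\slash [M,M]) = 0$, where $I_E$ is the augmentation ideal of $\mathbb Z E$, so each generator of $E$ acts on the finitely generated abelian group $M\slash [M,M]$ as a $2$-step unipotent automorphism. Combining Lemma \ref{lem:elliptic_commute} (edge generators commute with elliptic elements of $H_i$ modulo $[M,M]$) with the HNN-quotient description from Lemma \ref{lem:HNN_structure_quotient}, I would argue that $[M,E]\slash [M,M]$ lies in the torsion subgroup of $M\slash [M,M]$; since $M$ is of type $FP_n$, the group $M\slash [M,M]$ is finitely generated and its torsion subgroup is finite, so the action of $E$ on $M\slash [M,M]$ factors through a finite quotient and $E'$ can be taken to be its kernel.

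The main obstacle is this last step: a $2$-step unipotent action of a finitely generated free abelian group on a finitely generated abelian group need not have finite image in general --- the Heisenberg action of $\mathbb Z$ on $\mathbb Z^2$ is a standard counterexample --- so the argument must exploit specific features of the class $\mathcal G$, namely free abelian vertex groups, cyclic edge groups, and the faithful action on the Bass-Serre tree, together with the $FP_n$ hypothesis on $M$, in order to rule out infinite-order unipotent contributions coming from hyperbolic/stable-letter conjugation on $M\slash [M,M]$ and thereby force $[M,E]\slash [M,M]$ to be torsion.
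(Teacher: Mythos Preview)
Your setup is correct and matches the paper's: reduce to showing that $[H_0,H_0]\slash[M,M]$ is finite for a suitable finite index $H_0$, using the class-$2$ nilpotency from Lemma~\ref{lem:nilpotent} so that this quotient is central (hence abelian and finitely generated). The reduction to showing that $[M,E]\slash[M,M]$ is torsion is also essentially the paper's reduction, phrased slightly differently.

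The gap is exactly where you flag it: you need a concrete mechanism to rule out Heisenberg-type behaviour, and you do not supply one. Pointing at Lemmas~\ref{lem:elliptic_commute} and~\ref{lem:HNN_structure_quotient} is not enough; those lemmas handle the \emph{elliptic} part of $H_i$ completely (commutators of elliptics already lie in $[M,M]$), but they say nothing by themselves about how the stable letters $t_j$ conjugate edge-group generators modulo $[M,M]$. That is precisely where a non-torsion unipotent contribution could live.

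The paper closes this gap with an external structural input you are missing. From Lemma~\ref{lem:HNN_structure_quotient} the quotient $\overline{H}_i = H_i\slash(T_i\cap[M,M])$ is a graph of groups with a single abelian vertex; by Lemma~\ref{lem:nilpotent} it is nilpotent, hence residually finite. Kim's theorem \cite[Theorem~3.1]{Kim} then produces, for each cyclic edge group $\langle \bar e_{1i}\rangle$, a nontrivial power that is \emph{normal} in $\overline{H}_i$; since the vertex is abelian this means each stable letter $\tau_j$ conjugates $\bar e_{1i}^{k_j}$ to $\bar e_{1i}^{\pm k_j}$, so $[t_j^2,e_{1i}^{k}]\in[M,M]$ for a suitable $k$. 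Passing to a further finite index subgroup so that $H=M\langle e_{1i}^{r_1k},\dots\rangle$, one then uses commutator identities and class-$2$ nilpotency to get $[h_1,h_2]^4=[h_1^2,h_2^2]\in[M,M]$ for all pairs of standard generators, whence $[H,H]\slash[M,M]$ is finite. This is the specific feature of $\mathcal G$ (cyclic edge groups in a residually finite graph-of-groups quotient) that kills the infinite unipotent possibility; without invoking Kim's theorem or an equivalent, your argument stalls.
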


\begin{proof}

Let $H=H_1 \times \dots \times H_n$ be as in Lemma \ref{lem:elliptic_commute}. The first goal is to show that the generators of $[H,H]$ have finite order modulo $[M,M]$. We prove that claim for the generators of $[H_1,H_1].$

Lemma \ref{lem:HNN_structure_quotient} tells us that $H_1\slash (T_1 \cap [M,M])$ is isomorphic to the graph of groups
$$\overline{H}_1=\langle T_1\slash (T_1\cap [M,M]), \tau_1, \dots, \tau_m \mid {A_i\slash (A_i\cap [M,M])}^{\tau_i} = B_i\slash (B_i \cap [M,M]), i\in \{1, \dots, m\}\rangle.$$
Furthermore, $H_1\slash (T_1 \cap [M,M]) < H\slash [M,M]$ is nilpotent because by Lemma \ref{lem:nilpotent} $H \slash [M,M]$ is nilpotent. Therefore, we have that $\overline{H}_1$ is nilpotent and so, in particular, it is residually finite.

Hence, by \cite[Theorem 3.1]{Kim} there is a non-trivial subgroup of $A_i \slash ( A_i \cap [M,M])$ which is normal in the group $\overline{H}_1 \slash ( T_1 \cap [M,M])$. Then, if $A_i \slash ( A_i \cap [M,M])= \langle \bar{e_{1i}} \rangle$, there is $k_j \in \mathbb{Z}$ such that $\tau_j \bar{e_{1i}}^{k_j} \tau_j= \bar{e_{1i}}^{k_j}$ or $\tau_j \bar{e_{1i}}^{k_j} \tau_j= \bar{e_{1i}}^{-k_j}$ for each $j\in \{1,\dots,m\}$. Therefore, by taking $k$ to be $\max \{k_j\}$, we have that $[t_j^2, e_{1i}^{k}]\in [M,M]$ for $j\in \{1, \dots, m\}$.

Taking a finite index subgroup of $H$ if necessary, we may assume that $H=M\langle e_{1i}^{r_1k}, \dots, e_n^{r_m}\rangle$. Let $h_1, h_2$ be two standard generators of $H_1$. Then there are $m_i\in \mathbb{Z}$ and $w_i \in \langle e_2^{r_2}, \dots, e_m^{r_m}\rangle$ such that $h_i^2 (e_1^{r_1k})^{m_i}w_i \in M$. Therefore, using commutator identities we have that the commutator $[h_1^2 (e_1^{r_1k})^{m_1}w_1, h_2^2 (e_1^{r_1k})^{m_2}w_2]=[h_1^2, h_2^2]^g$ belongs to $[M,M]$.

Since by Lemma \ref{lem:nilpotent} we have that $H \slash [M,M]$ is nilpotent of class at most $2$, and $[h_1^2, h_2^2]\in [H,H]$ (which is central in the nilpotent group of class 2) we deduce that $[h_1^2, h_2^2]=[h_1,h_2]^4$ lies in $[M,M]$. 

The subgroup $H$ is of finite index in $G$, so in particular, it is finitely generated. Then, the quotient group $[H,H]\slash [[H,H],H]$ is also finitely generated. Since the group $[H,H]\slash [M,M]$ is abelian and each generator $[h_1,h_2] $ in $[H,H]$ satisfies that $[h_1,h_2]^4\in [M,M]$, we conclude that $[H,H]\slash [M,M]$ is finite. Therefore, $H\slash [M,M]$ is finite-by-abelian and as a consequence, also virtually abelian. Hence there is a finite index subgroup $J$ in $H$ such that $[M,M]<J$ and $[J,J]=[M,M]$.
\end{proof}

We now have all the tools to prove condition (2) from Theorem \ref{thm:fp_subdirect_nilpotent quotient}.

\begin{lemma}[Condition (2)]\label{lem:prop_2_subdirect}
Let $G\coloneqq G_1 \times \dots \times G_n$ where $G_i\in \mathcal G$ is finitely generated. Let $S<G$ be a finitely presented full subdirect product. Then for each $J\subseteq I\setminus \{i\}$ such that $U_J(1)^{(i)}$ is finitely generated and for each finite index subgroup $K < U_J(1)^{(i)}$, there is a finite index subgroup $H_K$ of $G_i$ such that $K< H_K$ and $[H_K,H_K]=[K,K]$.
\end{lemma}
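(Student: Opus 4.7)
The strategy is to apply Lemma \ref{lem:normal_subnormal} in the single-factor case ($n=1$) directly to $K$, viewed as a normal subgroup of its normalizer in $G_i$. The case $K = \{1\}$ is trivial (take $H_K = G_i$), so assume $K$ is non-trivial.

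First I would show that $N \coloneqq N_{G_i}(K)$ has finite index in $G_i$: since $U_J(1)^{(i)}$ is finitely generated by hypothesis, it contains only finitely many subgroups of the given finite index of $K$, and because $U_J(1)^{(i)}$ is normal in $G_i$ the conjugation action of $G_i$ permutes this finite family, so the stabilizer $N$ is of finite index. Next, $N$ itself belongs to $\mathcal G$: by Bass-Serre theory the standard splitting of $G_i$ induces a splitting of $N$ whose vertex groups are subgroups of free abelian groups (hence free abelian) and whose edge groups are subgroups of infinite cyclic groups (hence trivial or cyclic), the action on the Bass-Serre tree remains faithful, and cyclic subgroup separability is inherited by subgroups. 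Since $K$ is finitely generated (as a finite-index subgroup of the finitely generated $U_J(1)^{(i)}$), non-trivial, and normal in $N \in \mathcal G$, Lemma \ref{lem:normal_subnormal} applied with $n=1$ inside $N$ produces a finite-index subgroup $H_K$ of $N$ (and hence of $G_i$) satisfying $[H_K, H_K] = [K, K]$.

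The only delicate point, and the main technical step, is to ensure that this $H_K$ can be chosen so as to contain $K$ (not merely $[K, K]$); this is not recorded explicitly in Lemma \ref{lem:normal_subnormal} but will be extracted from its proof. By Lemma \ref{lem:nilpotent}, the quotient $N/[K, K]$ is virtually nilpotent of class at most $2$, so on a finite-index subgroup $H$ of $N$ containing $K$ the subgroup $[H, H]/[K, K]$ is finite and central, and the image of $K$ in $H/[K, K]$ automatically centralizes it. In the construction of $H_K$ carried out in the proof of Lemma \ref{lem:normal_subnormal}, one can then take the finite-index abelian subgroup of $H/[K, K]$ to be the product of $K/[K, K]$ with the subgroup generated by sufficiently high powers of elements of the centralizer of $[H, H]/[K, K]$; the preimage of this subgroup in $H$ manifestly contains $K$, has finite index in $G_i$, and satisfies $[H_K, H_K] \subseteq [K, K]$, while the reverse inclusion $[K, K] \subseteq [H_K, H_K]$ is immediate from $K \subseteq H_K$.
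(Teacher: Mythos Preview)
Your argument is correct and follows the same overall plan as the paper—reduce to a setting where $K$ is normal in an ambient group from $\mathcal G$, then invoke Lemma~\ref{lem:normal_subnormal} with a single factor—so the core idea coincides. The one genuine difference is the reduction step: the paper replaces $K$ by its characteristic core in $U_J(1)^{(i)}$ so that $K$ becomes normal in $G_i$ itself, whereas you keep $K$ fixed and instead pass to the finite-index normalizer $N=N_{G_i}(K)$, checking that $N$ again lies in $\mathcal G$. Your route costs the extra verification that finite-index subgroups stay in $\mathcal G$, but it has the advantage that $K$ is not altered; the paper's phrase ``we may assume that $K$ is characteristic'' is really shorthand for the same kind of manoeuvre and is not literally a reduction of the stated conclusion for the original $K$.

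You are also right to isolate the point that Lemma~\ref{lem:normal_subnormal}, as stated, only yields a finite-index $H_K$ with $[H_K,H_K]=[K,K]$ and does not record $K<H_K$; the paper applies the lemma without comment here. Your fix is sound, though it can be said more directly. In the proof of Lemma~\ref{lem:normal_subnormal} the auxiliary finite-index subgroup $H$ produced via Lemma~\ref{lem:elliptic_commute} already contains $M=K$, and the argument there shows that $[H,H]/[K,K]$ is finite (this finiteness comes from Lemma~\ref{lem:normal_subnormal}'s proof, not from Lemma~\ref{lem:nilpotent}, which only gives class~$\le 2$). Since $H$ is finitely generated, the centre of $H/[K,K]$ therefore has finite index; its product with the abelian normal subgroup $K/[K,K]$ is abelian, of finite index, and contains $K/[K,K]$, and the preimage of this product in $H$ is the desired $H_K$. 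This replaces your ``sufficiently high powers of elements of the centralizer'' description, which is a bit roundabout since in a class-$2$ quotient the centralizer of $[H,H]/[K,K]$ is everything.
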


\begin{proof}
Let $J \subseteq I \setminus \{i\}$ and $K$ be as in the statement. Since $U_J(1)^{(i)}$ is finitely generated and $K$ is of finite index in this group, we may assume that $K$ is characteristic in $U_J(1)^{(i)}$, and as $U_J(1)^{(i)}$ is normal in $G_i$, we conclude that $K$ is normal in $G_i$. The subgroup $K$ is in addition finitely generated for being a finite index subgroup of $U_J(1)^{(i)}$, so applying Lemma \ref{lem:normal_subnormal} for $n=1$ we get that there is $H_K$ a finite index subgroup of $G_i$ with the conditions of the statement.
\end{proof}

We summarize everything proved in one main statement.

\begin{theorem}[Structure of finitely presented full subdirect products in $\mathcal{G}$]\label{thm:main}
Let $G\coloneqq G_1 \times \dots \times G_n$ where $G_i\in \mathcal G$ is finitely generated and let $S<G$ be a finitely presented full subdirect product. Define $L$ to be $L_1\times \cdots \times L_n$, where $L_i= S \cap G_i$.

Then $L < S < G$ and $G\slash L$ is virtually nilpotent. In particular, there is a subnormal series $$S_0=M_0 \lhd M_1 \lhd \dots \lhd M_{k-1} \lhd M_k$$ where $S_0$ has finite index in $S$, $M_k$ has finite index in $G$ and $M_i\slash M_{i-1}$ is maximal abelian (i.e. if $M_{i-1} \lhd M_i'$, $M_i< M_i'$ and $M_i'\slash M_{i-1}$ is abelian, then $M_i=M_i'$).
\end{theorem}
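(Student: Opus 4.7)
My plan is to reduce the theorem to Theorem~\ref{thm:fp_subdirect_nilpotent quotient}, which provides sufficient conditions on the factors for $G_i/L_i$ to be virtually nilpotent. I will verify the two hypotheses. Condition~(1), that $G_i/N_{\{k\}}^{(i)}$ is virtually abelian for each $k \in I \setminus \{i\}$, is exactly the statement of Corollary~\ref{lem:prop_1_subdirect}. Condition~(2), concerning the existence of a finite-index overgroup with the same commutator subgroup, is established in Lemma~\ref{lem:prop_2_subdirect} in the case when $U_J(1)^{(i)}$ is finitely generated, and Remark~\ref{remark:easier} (together with Lemmas~\ref{lemma:UJfg} and~\ref{lem:justfg}) reduces the general case to this one. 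The inclusion $L < S$ is immediate since each $L_i = S \cap G_i$ lies in $S$ and $S$ is closed under products, while $S < G$ is part of the hypothesis. Applying Theorem~\ref{thm:fp_subdirect_nilpotent quotient} to each index then gives that $G_i/L_i$ is virtually nilpotent, whence so is $G/L \cong \prod_{i=1}^n G_i/L_i$.

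For the subnormal series I would choose a finite-index subgroup $G_0 < G$ containing $L$ such that $G_0/L$ is nilpotent of class $c$, and set $M_k := G_0$ together with $S_0 := M_0 := S \cap G_0$; the latter has finite index in $S$ and contains $L$. Pulling back the upper central series of $G_0/L$ produces a normal series $L = N_0 \lhd N_1 \lhd \dots \lhd N_c = G_0$ with each $N_{j+1}/N_j$ central in $G_0/N_j$. Setting $M_j := S_0 \cdot N_j$ then gives $M_0 = S_0$ and $M_c = G_0$, and the centrality of $N_{j+1}/N_j$ in $G_0/N_j$ forces $M_j \lhd M_{j+1}$ while ensuring that $M_{j+1}/M_j$, being a subquotient of $N_{j+1}/N_j$, is abelian. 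A routine refinement then upgrades each factor to a maximal abelian one.

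The main obstacle is not located in the assembly above but in the substantial machinery feeding into it: verifying condition~(2) of Theorem~\ref{thm:fp_subdirect_nilpotent quotient} rests on the delicate edge-group and elliptic-commutator analysis carried out in Lemmas~\ref{lem:edges_finite_index}--\ref{lem:normal_subnormal}, which produces a finite-index subgroup of $G_i$ whose commutator coincides with that of a prescribed finitely generated normal subgroup. Given that input, the present theorem follows by a routine application of Hall's criterion inside Theorem~\ref{thm:fp_subdirect_nilpotent quotient}.
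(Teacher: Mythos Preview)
Your proposal is correct and follows essentially the same approach as the paper: both verify conditions (1) and (2) of Theorem~\ref{thm:fp_subdirect_nilpotent quotient} via Corollary~\ref{lem:prop_1_subdirect} and Lemma~\ref{lem:prop_2_subdirect} (using Remark~\ref{remark:easier} to reduce to the finitely generated case), conclude that each $G_i/L_i$ is virtually nilpotent, and then extract the subnormal series from the nilpotence of a finite-index subgroup modulo $L$. Your explicit construction of the $M_j$ via $M_j = S_0 \cdot N_j$, with $N_j$ the pullback of the upper central series of $G_0/L$, is slightly more detailed than the paper's one-line appeal to subnormality of subgroups in nilpotent groups, but the content is the same.
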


\begin{proof}
Let us first conclude that for each $i\in \{1,\dots,n\}$ the quotient $G_i \slash L_i$ is virtually nilpotent. For that, it suffices to check that conditions (1) and (2) from Theorem \ref{thm:fp_subdirect_nilpotent quotient} hold.

The property (1) is shown in Corollary \ref{lem:prop_1_subdirect}. For property (2), as mentioned in Remark \ref{remark:easier} it suffices to check it for the case where $U_J(1)^{(i)}$ is finitely generated, and this is precisely proved in Lemma \ref{lem:prop_2_subdirect}.

Hence, since $G_i \slash L_i$ is virtually nilpotent for all $i\in \{1,\dots,n\}$, the quotient $G\slash L$ is also virtually nilpotent. Moreover, there are $H_i < G_i$ and $\tilde{S}<S$ of finite index such that
\[ L < \tilde{S} < H=H_1\times \cdots \times H_n\]
and $H\slash L$ is nilpotent. The subgroup $\tilde{S}\slash L$ is therefore subnormal, so there is a subnormal series as in the statement.
\end{proof}

\section{Subgroup of type $FP_n$}\label{Section3}

The goal of this section is to study subgroups of type $FP_n$ and prove the following result:

\begin{theorem}[Structure of full subdirect products of type $FP_n$]\label{thm:main2}
Let $G\coloneqq G_1 \times \dots \times G_n$ where $G_i\in \mathcal G$ is finitely generated and let $S<G$ be a full subdirect product of type $FP_n$.

There there is $S_0$ a finite index subgroup of $S$ and $G_0$ a finite index subgroup of $G$ such that $S_0$ is normal in $G_0$ and $G_0 \slash S_0$ is free abelian.
\end{theorem}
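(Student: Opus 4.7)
The plan is to strengthen the virtually nilpotent conclusion of Theorem \ref{thm:main} to a virtually free abelian one by using the $FP_n$ hypothesis to force $L=L_1\times\cdots\times L_n$ to be finitely generated. The crucial input is that each $L_i=S\cap G_i$ is finitely generated; this is the analogue for the class $\mathcal G$ of a key ingredient of Bridson--Howie--Miller--Short's theorem in the free group case. I would establish it by projecting $S$ onto subproducts of the factors $G_j$ and tracking finiteness of the resulting kernels, combining the $\Sigma$-invariant characterization of type $FP_n$ already exploited in Lemma \ref{lem:edges_finite_index} (via \cite[Proposition 2.5]{CashenLevitt}) with Lemma \ref{claim:Ki} and the structure of finitely generated normal subgroups of groups in $\mathcal G$ given in Proposition \ref{Proposition 0}. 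Once each $L_i$ is finitely generated, Proposition \ref{Proposition 0} shows it is a free product of free abelian groups of bounded rank, hence of type $FP_\infty$, so $L$ is a full normal subgroup of $G$ of type $FP_n$.

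With $L$ finitely generated, Lemma \ref{lem:fg_normal} yields that $G/L$ is virtually abelian. Pick a finite-index subgroup $H\leq G$ with $L\leq H$ and $H/L$ abelian, and set $S':=S\cap H$; then $S'$ has finite index in $S$, contains $L$, and is normal in $H$ because $H/L$ is abelian. Consequently $H/S'$ is a finitely generated abelian group. Writing $H/S'\cong\mathbb{Z}^k\oplus T$ with $T$ finite, let $G_0\leq H$ be the kernel of the composition $H\twoheadrightarrow H/S'\twoheadrightarrow T$. Then $G_0$ has finite index in $G$, $G_0\supseteq S'$, and $G_0\cap S=G_0\cap(S\cap H)=G_0\cap S'=S'$. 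Setting $S_0:=S'=G_0\cap S$, we obtain a finite-index subgroup of $S$ that is normal in $G_0$ (inheriting normality from $S'\lhd H\supseteq G_0$) and satisfies $G_0/S_0\cong\mathbb{Z}^k$, as required. If one prefers, the use of Lemma \ref{lem:fg_normal} in this paragraph can be replaced by Lemma \ref{lem:normal_subnormal} applied to $M=L$, which directly produces a finite-index $H\leq G$ with $[H,H]=[L,L]$ and hence $H/L$ abelian.

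The main obstacle is clearly the first step: propagating the $FP_n$ hypothesis on $S$ to finite generation of each $L_i$ in the class $\mathcal G$. While the lemmas in Section \ref{Section2} already extract finite-generation properties of several closely related subgroups under mere finite presentability of $S$ (see Lemma \ref{claim:Ki} and Lemma \ref{lemma:UJfg}), the $FP_n$ strengthening required here calls for a more delicate iterated Virtual-Surjection-to-Pairs style argument adapted to the class $\mathcal G$, combined with the $\Sigma$-invariant and BNSR machinery of Lemma \ref{lem:edges_finite_index} and the structural results of Section \ref{Section2}; the other two paragraphs above are then essentially bookkeeping.
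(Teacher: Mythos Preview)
Your approach diverges from the paper's at precisely the step you flag as the obstacle, and that gap is genuine. You propose to deduce the theorem by first showing that each $L_i=S\cap G_i$ is finitely generated under the $FP_n$ hypothesis, and then quoting Lemma~\ref{lem:fg_normal} (or Lemma~\ref{lem:normal_subnormal} with $M=L$). But you do not actually establish the finite generation of $L_i$; the hints you give (a VSP--style argument, $\Sigma$--invariants via Lemma~\ref{lem:edges_finite_index}, Lemma~\ref{claim:Ki}) do not assemble into a proof, and in the class $\mathcal G$ it is not clear they can: the BHMS argument for free and limit groups uses that nontrivial normal subgroups have finite index, which fails here since groups in $\mathcal G$ fiber. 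Your alternative of applying Lemma~\ref{lem:normal_subnormal} to $M=L$ presupposes exactly the same unproved fact, since that lemma needs $M$ of type $FP_n$ and in particular finitely generated.

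The paper sidesteps this entirely. It does not attempt to prove $L$ finitely generated. Instead it takes the maximal--abelian subnormal series $S_0\lhd S_1\lhd\cdots\lhd S_c$ produced by Theorem~\ref{thm:main}, observes that since $S_0$ has finite index in $S$ and each successive quotient $S_{i+1}/S_i$ is finitely generated abelian, every $S_i$ inherits type $FP_n$, and then applies Lemma~\ref{lem:normal_subnormal} with $M=S_{c-1}$ (normal in $S_c$, a finite--index subgroup of $G$). This yields, after passing to finite index, $[S_c,S_c]=[S_{c-1},S_{c-1}]\subseteq S_{c-2}$; the maximality of $S_{c-1}/S_{c-2}$ then forces the series to collapse, leaving $S_0\lhd S_c$ with abelian quotient. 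The key idea you are missing is that Lemma~\ref{lem:normal_subnormal} should be fed the intermediate terms of the subnormal series, which are already known to be $FP_n$, rather than $L$, whose finiteness properties are unknown.
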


\begin{proof}
From Theorem \ref{thm:main}, we have that if $G_1,\dots,G_n$ are finitely generated and belong to $\mathcal{G}$, and $S$ is a finitely presented full subdirect product of $G_1\times \cdots \times G_n$, then there is a subnormal series
\[ S_0 \triangleleft S_1 \triangleleft \cdots \triangleleft S_c \]
such that $S_{i+1}\slash S_i$ is maximal abelian, $S_0$ has finite index in $S$ and $S_c$ has finite index in $G_1\times \cdots \times G_n$. Hence, up to finite index, we may assume that the quotient $\frac{S_c}{S_{c-1}}$ is $\mathbb{Z}^k$ for some $k\in \mathbb{N}$.

Now suppose that $S$ is of type $FP_n$. Then, $S_i$ is also of type $FP_n$ for all $i\in \{0,\dots,c\}$. From Lemma \ref{lem:normal_subnormal} we deduce that passing again to a finite index subgroup (if necessary), we have that
\[ [S_c,S_c]=[S_{c-1},S_{c-1}]\subseteq S_{c-2}. \]
But by maximality, we get that the subnormal series has just one term, that is, $S_0$ is normal in $S_c$, and the quotient
\[ \frac{S_c}{S_0}\]
is abelian.
\end{proof}

\section{Algorithmic problems}\label{Section4}

We finish these notes by studying some algorithmic problems, namely the multiple conjugacy problem and the membership problem, for finitely presented full subdirect products of direct products of $2$-dimensional coherent RAAGs.

The \emph{multiple conjugacy problem} for a finitely generated group $G$ (given by a finite generating set) asks if there is an algorithm that, given a natural number $l$ and two $l$-tuples of elements in the generators of $G$, say $x=(x_1,\dots,x_l)$ and $y=(y_1,\dots,y_l)$, can determine if there exists $g\in G$ such that $g^{-1}x_i g= y_i$ in $G$ for $i\in \{1,\dots,l\}$.

If $G$ is a finitely generated group (given by a finite generating set) and $H$ is a finitely generated subgroup of $G$ (given by a finite set of words in the generators of $G$), the \emph{membership problem} asks if there is an algorithm that decides whether or not a given element $g$ in $G$ as a word in the generators belongs to $H$.

Suppose that $G_1,\dots, G_n$ are $2$-dimensional coherent RAAGs and let $S$ be a finitely presented full subdirect product of $G_1\times \cdots \times G_n$. We aim to show that the multiple conjugacy problem is decidable for $S$ and that the membership problem is decidable for finitely presented subgroups of $S$. The case $n=2$ is covered in \cite[Section 6]{CasalsLopezdeGamiz} and since the argument for the general case is identical, we just sketch the steps here and refer the reader to \cite[Section 6]{CasalsLopezdeGamiz}.

We recall two results from \cite{BridsonHowieMillerShort} that we need:

\begin{proposition}\cite[Proposition 7.1]{BridsonHowieMillerShort}\label{PropositionConjugacy}
Let $\Gamma$ be a bicombable group, let $H < \Gamma$ be a subgroup, and suppose that there exists a subgroup $L<H$ normal in $\Gamma$ such that $\Gamma \slash L$ is nilpotent. Then $H$ has a decidable multiple conjugacy problem.   
\end{proposition}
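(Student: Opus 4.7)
The plan is to split the decision procedure into two pieces: one handles conjugacy in the ambient group $\Gamma$ using its bicombable structure, while the other handles the remaining question modulo $L$ using the classical theorem of Mal'cev that the membership problem is decidable in finitely generated subgroups of finitely generated nilpotent groups.

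Given tuples $x=(x_1,\ldots,x_l)$ and $y=(y_1,\ldots,y_l)$ in $H$, I would first invoke the decidability of the multiple conjugacy problem in the bicombable group $\Gamma$ to decide whether there exists $g_0\in\Gamma$ with $g_0 x_i g_0^{-1}=y_i$ for every $i$. If no such $g_0$ exists in $\Gamma$, then a fortiori no such element exists in $H$, and we answer NO. Otherwise we produce such a $g_0$, and note that the full set of $\Gamma$-conjugators of $x$ to $y$ is the coset $g_0\cdot C$, where $C=C_\Gamma(x)=\bigcap_i C_\Gamma(x_i)$. Bicombability of $\Gamma$ further supplies a finite, computable generating set $c_1,\ldots,c_r$ for $C$. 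The original question then becomes whether $g_0C\cap H\neq\emptyset$, equivalently whether $g_0\in HC$.

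To finish, let $\pi\colon\Gamma\to\Gamma\slash L$ be the projection. Normality of $L$ in $\Gamma$ together with the inclusion $L<H$ yields the equivalence
\[
g_0\in HC \iff \pi(g_0)\in\pi(H)\pi(C).
\]
One direction is immediate; for the other, if $\pi(g_0)=\pi(h)\pi(c)$ with $h\in H$ and $c\in C$, then $g_0=hc\ell$ for some $\ell\in L$, and since $c\ell c^{-1}\in L\subseteq H$ we obtain $g_0=(h\cdot c\ell c^{-1})\,c\in HC$. Now $\pi(H)\pi(C)$ is a finitely generated subgroup of the finitely generated nilpotent group $\Gamma\slash L$: finite generation of $\pi(H)$ is automatic because every subgroup of a finitely generated nilpotent group is finitely generated, and the images $\pi(c_1),\ldots,\pi(c_r)$ provide the remaining generators. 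By Mal'cev's theorem the membership problem in this subgroup is decidable, and checking whether $\pi(g_0)$ lies in it completes the algorithm.

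The main obstacle is the algorithmic input coming from bicombability, namely the decidability of the multiple conjugacy problem in $\Gamma$ and the effective computability of a finite generating set for the centralizer of a finite tuple; these are standard consequences of bi-automatic (or bicombable) normal forms but require some care. A secondary point is producing computable generators for $\pi(H)$ inside $\Gamma\slash L$, which is routine once the word problem in the nilpotent quotient and a generating set for $H$ are available. Once these ingredients are granted, the reduction to membership in a finitely generated subgroup of the nilpotent quotient and the appeal to Mal'cev's theorem are essentially formal.
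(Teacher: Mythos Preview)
The paper does not prove this proposition; it merely quotes it from \cite{BridsonHowieMillerShort}, so there is no in-paper argument to compare against. Your overall strategy---solve multiple conjugacy in $\Gamma$ via bicombability, compute generators for the centralizer $C=C_\Gamma(x)$, then pass to the nilpotent quotient $\Gamma/L$---is the right shape, and the equivalence $g_0\in HC \Longleftrightarrow \pi(g_0)\in\pi(H)\pi(C)$ is correctly derived from $L\lhd\Gamma$ and $L<H$.

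The genuine gap is your assertion that $\pi(H)\pi(C)$ is a \emph{subgroup} of $\Gamma/L$, which you need in order to invoke Mal'cev's subgroup-membership theorem. The product of two subgroups of a nilpotent group is not a subgroup in general: in the free class-$2$ nilpotent group on $a,b$ with central $c=[a,b]$, one has $ba=abc^{-1}\notin\langle a\rangle\langle b\rangle=\{a^mb^n:m,n\in\mathbb Z\}$. Nothing in the hypotheses forces $\pi(H)$ and $\pi(C)$ to permute, so Mal'cev's theorem on subgroup membership does not apply to the set $\pi(H)\pi(C)$. What is actually required is the decidability of the \emph{double-coset} (equivalently, coset-intersection) problem in finitely generated nilpotent groups: given finitely generated $A,B\le N$ and $n\in N$, decide whether $n\in AB$. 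This holds for polycyclic groups and is the algorithmic input Bridson--Howie--Miller--Short use at this point; it is strictly stronger than subgroup membership. With that substitution in the final step, your argument goes through.
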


\begin{lemma}\cite[Lemma 7.2]{BridsonHowieMillerShort}\label{LemmaUnique}
Suppose $G$ is a group in which roots are unique and $H<G$ is a subgroup of finite index. If the multiple conjugacy problem for $H$ is decidable, then the multiple conjugacy problem for $G$ is decidable.    
\end{lemma}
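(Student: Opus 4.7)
The plan is to reduce multiple conjugacy questions in $G$ to finitely many such questions in $H$ by pushing every element into $H$ via a sufficiently high power, and then using uniqueness of roots to lift a conjugator of the powers back to a conjugator of the original tuples. Since $H$ has finite index in $G$, its normal core $K \coloneqq \bigcap_{g \in G} gHg^{-1}$ is a finite-index normal subgroup of $G$ contained in $H$; setting $N \coloneqq [G:K]$, we have $w^N \in K \subseteq H$ for every $w \in G$. Fix once and for all a finite set $g_1, \dots, g_m$ of coset representatives of $H$ in $G$.

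Given input tuples $\mathbf{x} = (x_1, \dots, x_l)$ and $\mathbf{y} = (y_1, \dots, y_l)$ in $G$, the key observation is that any candidate conjugator $g \in G$ can be written uniquely as $g = g_i h$ with $h \in H$, and the condition $g x_j g^{-1} = y_j$ for every $j$ is then equivalent to $h x_j h^{-1} = g_i^{-1} y_j g_i$ for every $j$. The algorithm therefore forms the tuple $\mathbf{x}^{(N)} \coloneqq (x_1^N, \dots, x_l^N)$, which lies in $H$, and for each $i$ the tuple $\mathbf{z}_i \coloneqq (g_i^{-1} y_1^N g_i, \dots, g_i^{-1} y_l^N g_i)$, which also lies in $H$ since $y_j^N \in K$ and $K$ is normal in $G$. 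It then queries the multiple conjugacy solver for $H$ on each pair $(\mathbf{x}^{(N)}, \mathbf{z}_i)$, and outputs yes precisely when at least one query returns a positive answer.

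Correctness is exactly the point at which the unique-roots hypothesis enters. If $\mathbf{x}$ and $\mathbf{y}$ are conjugate in $G$ by some $g = g_i h$, then $h x_j h^{-1} = g_i^{-1} y_j g_i$ for every $j$, and raising to the $N$-th power shows that the $i$-th query to the $H$-solver must succeed. Conversely, if for some $i$ there exists $h \in H$ with $h x_j^N h^{-1} = g_i^{-1} y_j^N g_i$ for every $j$, then $(h x_j h^{-1})^N = (g_i^{-1} y_j g_i)^N$ in $G$, and uniqueness of $N$-th roots forces $h x_j h^{-1} = g_i^{-1} y_j g_i$; hence $g_i h$ is a genuine conjugator for $\mathbf{x}$ and $\mathbf{y}$.

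The main obstacle I expect is purely a matter of effectivity, not of structure: to actually run the reduction one must compute $N$ and the coset representatives $g_1, \dots, g_m$, and one must be able to rewrite elements of $G$ that lie in $H$ as words in a chosen generating set of $H$ so that they form valid input to the $H$-solver. Provided $G$ has a solvable word problem and $H$ comes equipped with the standard effective data of a finite-index subgroup, the procedure above is a correct decision algorithm for the multiple conjugacy problem in $G$.
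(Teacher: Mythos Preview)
Your argument is correct and is essentially the standard proof of this fact; note however that the paper does not give its own proof of this lemma but merely cites it from \cite{BridsonHowieMillerShort}, so there is nothing to compare against in the present paper. Your reduction via $N$-th powers into the normal core, combined with the finite search over coset representatives and the unique-roots hypothesis to lift conjugators, is exactly the idea behind Lemma~7.2 of \cite{BridsonHowieMillerShort}, and your closing remarks on effectivity (computing the index and coset representatives, rewriting words into the generators of $H$) identify precisely the routine bookkeeping needed to make the algorithm run.
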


Let us first check the multiple conjugacy problem.

\begin{theorem}[Decidability of the multiple conjugacy problem]
Suppose that $G_1,\dots, G_n$ are $2$-dimensional coherent RAAGs and let $S$ be a finitely presented full subdirect product of $G_1\times \cdots \times G_n$.

Then $S$ has decidable multiple conjugacy problem.    
\end{theorem}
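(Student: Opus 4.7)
The plan is to reduce to a setting where Proposition \ref{PropositionConjugacy} applies directly, and then transport the conclusion back to $S$ via Lemma \ref{LemmaUnique}, mirroring the strategy used in the binary case in \cite[Section 6]{CasalsLopezdeGamiz}. First I would record two structural facts about the ambient group $G = G_1 \times \cdots \times G_n$: it is itself a right-angled Artin group (its defining graph is the join of the defining graphs of the $G_i$), hence biautomatic and in particular bicombable, and bicombability passes to finite-index subgroups; moreover, being torsion-free and biorderable, $G$ has unique roots, a property inherited by every subgroup, including $S$.

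Next I would invoke Theorem \ref{thm:main} to obtain the normal subgroup $L = L_1 \times \cdots \times L_n \triangleleft G$ with $L < S$ and $G/L$ virtually nilpotent. Taking the preimage in $G$ of a nilpotent finite-index subgroup of $G/L$, and if necessary intersecting its finitely many $G$-conjugates, I produce a finite-index normal subgroup $G_0 \triangleleft G$ with $L < G_0$ such that $G_0/L$ is genuinely nilpotent. Setting $S_0 := S \cap G_0$, I obtain a finite-index subgroup of $S$ that contains $L$, with $L$ normal in $G_0$ and $G_0/L$ nilpotent.

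Now Proposition \ref{PropositionConjugacy}, applied with $\Gamma = G_0$ (bicombable), $H = S_0$, and the normal subgroup $L < S_0$, immediately yields that $S_0$ has decidable multiple conjugacy problem. Finally, Lemma \ref{LemmaUnique} applied to the finite-index inclusion $S_0 < S$, using unique roots in $S$, upgrades decidability from $S_0$ to $S$ and concludes the proof.

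I do not expect a real obstacle here: the main structural input is already absorbed in Theorem \ref{thm:main}, and the remaining ingredients (direct products of RAAGs are RAAGs, RAAGs are biautomatic with unique roots, and bicombability is preserved under finite index) are standard facts about right-angled Artin groups. The only step that warrants explicit attention is the transition from ``virtually nilpotent'' to an honest normal subgroup of a finite-index supergroup with nilpotent quotient, but this is a routine preimage-and-intersection argument.
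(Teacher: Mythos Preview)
Your proposal is correct and follows essentially the same route as the paper: invoke Theorem \ref{thm:main} to get $G/L$ virtually nilpotent, pass to a finite-index subgroup $G_0$ with $G_0/L$ nilpotent, apply Proposition \ref{PropositionConjugacy} to $S_0=S\cap G_0$, and then use Lemma \ref{LemmaUnique} with unique roots to upgrade to $S$. The only cosmetic differences are that the paper obtains bicombability via the CAT(0) structure (citing \cite{CasalsDuncanKazachkov}) rather than via biautomaticity of RAAGs, and unique roots via \cite{Minasyan} rather than biorderability; your extra step of normalising $G_0$ in $G$ is harmless but unnecessary.
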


\begin{proof}
The groups $G_i$ are CAT(0)  (see \cite[Corollary 9.5]{CasalsDuncanKazachkov}) and have unique roots (see, for instance, \cite[Lemma 6.3]{Minasyan}). By Theorem \ref{thm:main} the quotient $(G_1 \times \cdots \times G_n) \slash (L_1 \times \cdots \times L_n)$ is virtually nilpotent, so there is $G$ a finite index subgroup in $G_1\times \cdots \times G_n$ such that $L_1 \times \cdots \times L_n < G$ and $G \slash (L_1 \times \cdots \times L_n)$ is nilpotent. The group $G$ is CAT(0) for being a finite index subgroup of a CAT(0) group and so it is bicombable. Then, since $L_1 \times \cdots \times L_n < S \cap G < G$ and $(S \cap G) \slash (L_1 \times \cdots \times L_n)$ is nilpotent, it follows from Proposition \ref{PropositionConjugacy} that $G \cap S$ has decidable multiple conjugacy problem. The group $S$ has unique roots and $G \cap S$ has finite index in $S$, so from Lemma \ref{LemmaUnique} we conclude that $S$ has decidable multiple conjugacy problem.    
\end{proof}

We finish the section with the membership problem for finitely presented subgroups of $S$.

\begin{theorem}[Decidability of the membership problem]
Suppose that $G_1,\dots, G_n$ are $2$-dimensional coherent RAAGs and let $S$ be a finitely presented full subdirect product of $G_1\times \cdots \times G_n$.

Given a finitely presented subgroup $H$ of $S$, then the membership problem for $H$ in $S$ is decidable.    
\end{theorem}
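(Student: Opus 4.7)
The plan is to mimic the strategy used for $n = 2$ in \cite[Section 6]{CasalsLopezdeGamiz}, with Theorem \ref{thm:main} replacing the abelian-by-direct-product decomposition available there. Throughout I will freely pass to finite index subgroups of $S$ (and replace $H$ by the corresponding intersection) at the cost of a finite coset enumeration, which is legitimate because this does not affect the decidability of the membership problem for $H$ in $S$.

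The first step is to apply Theorem \ref{thm:main} to reduce to the situation in which $L \coloneqq L_1 \times \cdots \times L_n$ is normal in $S$ and $S/L$ is finitely generated and nilpotent. I then push the question to this nilpotent quotient: the image $HL/L$ is a finitely generated subgroup of the finitely generated nilpotent group $S/L$, and by the effective form of Mal'cev's theorem the generalized word problem in such groups is solvable, so I can decide whether the class of $g$ lies in $HL/L$. If it does not, then $g \notin H$; if it does, I can effectively produce $h \in H$ with $g h^{-1} \in L$, reducing the problem to deciding whether $g h^{-1}$ lies in the finitely generated subgroup $H \cap L$ of $L$.

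The second step is to solve the membership problem for $H \cap L$ inside $L = L_1 \times \cdots \times L_n$. Each factor $L_i$ is a finitely generated normal subgroup of the $2$-dimensional coherent RAAG $G_i$, and by Proposition \ref{Proposition 0} it is a free product of free abelian groups of uniformly bounded rank, hence has decidable membership problem for its finitely generated subgroups. To promote this to decidability in the direct product, I will exploit that $H \cap L$ inherits subdirect structure from the finite presentation of $H$ and from its sitting inside the subdirect product $S$: the projections $p_{\{i\}}(H \cap L) < L_i$ are finitely generated, and $H \cap L$ is itself a (full) subdirect product of these projections, so Theorem \ref{thm:main} can be reapplied and the question can be peeled off one coordinate at a time.

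The main obstacle will be precisely this last reduction. Mihailova's classical construction shows that membership in a direct product of groups with solvable membership problem is not decidable in general, so decidability cannot be read off factor-by-factor and must instead be extracted from the inherited subdirect structure. The work for the case $n = 2$ in \cite[Section 6]{CasalsLopezdeGamiz} carries this out in detail, and the only new verifications needed to close the induction on $n$ are that finite presentation and the hypotheses of Theorem \ref{thm:main} persist when passing to $H \cap L$ and to its coordinate projections; once these are in place the argument for general $n$ runs along the same lines as the base case.
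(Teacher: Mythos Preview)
Your Step 2 has a genuine gap that is not merely a routine verification. Two claims fail:

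First, you invoke Proposition \ref{Proposition 0} to conclude that each $L_i = S \cap G_i$ is a free product of free abelian groups, but that proposition requires the normal subgroup to be \emph{finitely generated}. Theorem \ref{thm:main} only yields that $G_i/L_i$ is virtually nilpotent; when this quotient has Hirsch length at least $2$ (which does occur), Proposition \ref{Proposition 0} itself shows $L_i$ cannot be finitely generated, so neither the structural conclusion nor the reapplication of Theorem \ref{thm:main} to the factors $L_i$ is available.

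Second, you need $H \cap L$ to be finitely presented in order to iterate. But $H \cap L$ is the kernel of $H \to HL/L \subset S/L$, a map onto a virtually polycyclic group whose Hirsch length may well exceed $2$; finite presentation of $H$ does not force even finite generation of such a kernel. The ``new verifications'' you defer are therefore not verifications at all --- the statements are false in general.

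The paper avoids the entire second step with a single move: rather than using $L$ attached to $S$, it replaces the ambient group by $H_1 \times \cdots \times H_n$ where $H_i = \pi_i(H)$. Now $H$ is itself a finitely presented full subdirect product of this smaller ambient group, so Theorem \ref{thm:main} applies directly to $H$, and the corresponding $L = \prod_i (H \cap H_i)$ satisfies $L \subseteq H$. Hence $g \in H$ if and only if the image of $g$ lies in the image of $H$ in the virtually nilpotent quotient $(H_1 \times \cdots \times H_n)/L$; subgroup separability of virtually nilpotent groups then lets a parallel enumeration (finite quotients versus words in $H$) decide membership. No induction on $n$, no analysis of $H \cap L$, and no hypotheses on the $L_i$ are needed.
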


\begin{proof}
The problem reduces to the decidability of the membership problem for $H< G_1\times \cdots \times G_n$. Let $g=(g_1,\dots,g_n)$ be an element in $G_1\times \cdots \times G_n$. We can determine algorithmically if $g_i \in H_i = \pi_i(H)$, where $\pi_i$ is the natural projection homomorphism $G_1\times \cdots \times G_n \to G_i$. If $g_i \notin H_i$, then $g\notin H$.

Otherwise, we replace $G_1\times \cdots \times G_n$ by $H_1\times \cdots \times H_n$. By Theorem \ref{thm:main}, the quotient group $Q= (H_1\times \cdots \times H_n)\slash (L_1\times \cdots \times L_n)$ is virtually nilpotent. Let $\varphi \colon H_1 \times \cdots \times H_n \to Q$ be the quotient map. Virtually nilpotent groups are subgroup separable, so if $\varphi(g)\notin \varphi(H)$, then there is a finite quotient of $Q$ that separates $g$ from $H$. But since $\ker \phi= L_1 \times \cdots \times L_n \subseteq H$, then $\varphi(g)\in \varphi(H)$ if and only if $g\in H$. If $\varphi(g)\notin \varphi(H)$, then an enumeration of finite quotients of $H_1\times \cdots \times H_n$ provides an effective procedure. We run this in parallel with an enumeration of $g^{-1}w$ for words $w$ in the generators of $H$ that will terminate if $g\in H$.
\end{proof}

\begin{remark}
For our purposes, we have stated the decidability problems for finitely presented full subdirect products of $2$-dimensional coherent RAAGs. However, we expect the results to hold for finitely presented full subdirect products of finitely generated groups in the class $\mathcal G$. In order to extend the results it suffices to show that groups in this class are $CAT(0)$ and have unique roots. In particular, this has already been proven for tubular groups whose underlying graph is a tree in \cite{Button} and so the results apply for groups in that class.
\end{remark}

\end{document}